\newcommand\myurl[1]{\url{#1}}
\def\MR#1{\quad \href{http://www.ams.org/mathscinet-getitem?mr=#1}{MR#1}}
\renewcommand{\eprint}[1]{#1}
\newtheorem{thrm}{Theorem}[section]
\newtheorem{prop}[thrm]{Proposition}
\newtheorem{coro}[thrm]{Corollary}
\newtheorem{lemm}[thrm]{Lemma}
\theoremstyle{definition}
\newtheorem{defn}[thrm]{Definition}
\newtheorem{exam}[thrm]{Example}
\newtheorem{rema}[thrm]{Remark}
\newcommand{\introthm}[3]{

\vspace{0.8em}
\noindent\textbf{#1 #2.}\hspace{-1ex}
\textit{#3}
\vspace{0.8em}

}
\renewcommand{\bar}[1]{\overline{#1}}
\renewcommand{\hat}[1]{\widehat{#1}}
\newcommand{\Z}{\mathbb{Z}}
\newcommand{\C}{\mathbb{C}}
\newcommand{\G}{\mathbb{G}}
\newcommand{\K}{\mathbb{K}}
\renewcommand{\H}{\mathbb{H}}
\newcommand{\eps}{\varepsilon}
\newcommand{\lbd}{\lambda}
\renewcommand{\phi}{\varphi}
\newcommand{\map}[3]{#1\colon#2 \longrightarrow #3}
\newcommand{\abs}[1]{\lvert #1 \rvert}
\newcommand{\nor}[1]{\lVert #1 \rVert}
\newcommand{\ip}[1]{\left\langle #1 \right\rangle}
\newcommand{\cl}[1]{\mathcal{#1}}
\newcommand{\tensor}{\otimes}
\newcommand{\fin}{\mathrm{f}}
\newcommand{\astar}{$\ast$}
\newcommand{\cstar}{C*}
\newcommand{\hyp}{\mathrm{hyp}}
\newcommand{\HypGrp}{\mathbf{HypGrp}}
\newcommand{\tend}{\textendash}
\newcommand{\gbag}{\Gamma'\backslash\Gamma}
\newcommand{\Kac}{\mathrm{Kac}}
\newcommand{\uni}{\mathrm{uni}}
\newcommand{\bigast}{\mathop{\scalebox{1.5}{\raisebox{-0.2ex}{$\ast$}}}}
\let\mod\relax
\DeclareMathOperator{\stker}{\mathrm{StKer}}
\DeclareMathOperator{\FPdim}{\mathrm{dim_{FP}^{nor}}}
\DeclareMathOperator{\id}{\mathrm{id}}
\DeclareMathOperator{\Index}{\mathrm{Index}}
\DeclareMathOperator{\hilb}{\mathrm{Hilb}}
\DeclareMathOperator{\rep}{\mathrm{Rep}}
\DeclareMathOperator{\mod}{\mathrm{Mod}}
\DeclareMathOperator{\corr}{\mathrm{Corr}}
\DeclareMathOperator{\rfin}{\mathrm{rf}}
\DeclareMathOperator{\irr}{\mathrm{Irr}}
\DeclareMathOperator{\onb}{\mathrm{ONB}}
\DeclareMathOperator{\ev}{\mathrm{ev}}
\title[Finite index quantum subgroups of DQGs]{Finite index quantum subgroups of discrete quantum groups}
\author{Mao Hoshino}
\address{Department of Mathematical Sciences, The University of Tokyo\\
Komaba 3-8-1, Tokyo \mbox{153-8914}, Japan}
\email{mhoshino@ms.u-tokyo.ac.jp}
\subjclass[2020]{Primary~46L67, Secondary~20N20, 46L54}
\keywords{operator algebra, quantum group, hypergroup, free product}
\thanks{This work was supported by JSPS KAKENHI Grant Number JP23KJ0695 and WINGS-FoPM Program at the University of Tokyo.}
\begin{document}

\begin{abstract}
We show that finite index quantum subgroups of a discrete quantum group
are induced from finite index quantum subgroups of the unimodularization. As an application, we classify
all finite index quantum subgroups of free products of the duals of
connected simply-connected compact Lie groups.
We also put proofs for some fundamental facts on finite index
right coideals of compact quantum groups.
\end{abstract}

\maketitle

\section{Introduction}

Starting with S.L. Woronowicz's discovery of the notion of
compact quantum group (\cite{MR901157,MR1616348}),
the operator algebraic framework of quantum group
has been developed by a number of researchers. Many
classical notions for locally compact groups are brought
into the theory of locally compact quantum group in successful ways,
including the definition of locally compact quantum group itself
given by J. Kustermans and S. Vaes in \cite{MR1832993}.

For the notion of closed quantum subgroup of locally compact
quantum group, there are two definitions proposed by S. Vaes (\cite[Definition 2.5]{MR2182592})
and S.L. Woronowicz (\cite[Definition 3.2]{MR2980506}).
It remains open whether these definitions are equivalent in general.
However, it was shown in \cite{MR2980506} that Vaes' definition is stronger than or equivalent to Woronowicz's definition, and they coincide in
certain cases, including the setting of discrete quantum groups.
This allows us to have the unified definition of
quantum subgroups of discrete quantum groups,
which can also be captured by the representation theory
of the dual compact quantum groups. Indeed it might be well-known
to experts that there is a one-to-one correspondence
between quantum subgroups of a discrete quantum group $\Gamma$ and
the set of non-empty subsets of $\irr \hat{\Gamma}$
which are closed under tensor products and conjugate.
See \cite[Lemma 4.1]{MR4742819} for example.

In some specific cases, quantum subgroups of a given discrete
quantum group are explicitly classified. In \cite{freslon2024discretequantumsubgroupsfree} A. Freslon and M. Weber
classified all quantum subgroups of the
duals of the free unitary quantum groups and give explicit presentations
of them. The classification result for the free products
of such discrete quantum groups is also given by K. Kitamura
and the author (\cite{MR4824928}). This classification describes quantum subgroups in terms of Cayley graphs of free groups.

The first objective of this paper is to provide some fundamental facts on
finite index right coideals of compact quantum groups, which can be applied
to quantum subgroups of discrete quantum groups.
Expecially we show that some natural finiteness conditions on
right coideals are equivalent:

\introthm{Proposition}{\ref{prop:characterizations of finite index right coideals}}{
Let $\G$ be a compact quantum group and
$B$ be a right coideal of $C(\G)$. The algebraic cores of $C(\G)$
and $B$ are denoted by $\cl{O}(\G)$ and $\cl{B}$.
The following conditions are equivalent:
\begin{enumerate}
 \item A right $\cl{B}$-module $\cl{O}(\G)$ is finitely generated.
 \item There is a conditional expectation $\map{\cl{E}}{\cl{O}(\G)}{\cl{B}}$ which is of finite index and preserves the right coactions of $\cl{O}(\G)$.
 \item There is a $\G$-equivariant conditional expectation
$\map{E}{C(\G)}{B}$ which is of finite index.
 \item $\dim_{\C} B^{\perp} < \infty$, where $B^{\perp} \subset \ell^{\infty}(\hat{\G})$ is the coideal orthogonal to $B$.
 \item The category $\G\text{-}\mod^{\fin}_B$ of finitely generated
$\G$-equivariant Hilbert $B$-modules
has only finitely many irreducible objects.
\end{enumerate}
In this case we also have $\Index^s E = \dim_{\C}B^{\perp} = \FPdim \G\text{-}\mod_B$, which is denoted by $[C(\G):B]$.
}

We say that a right coideal $B$ is \emph{of finite index} if it satisfies
these conditions.
I would like to emphasize that the conditions (i), (iv) and (v) look weaker than (ii) and (iii) since we do not assume the existence of
a$\G$-equvariant conditional expectation onto $B$ in these conditions.
Actually we need to use some non-trivial results on quantum groups
to prove the equivalence, namely Vaes's implementation
theorem for actions (\cite{MR1814995})
and a special property of the maximal Kac quantum subgoup of
a compact quantum group (\cite{MR2210362}).

We also obtain the following imprimitivity theorem on finite index right coideals. Analogous results are known for different kinds of actions of the Drinfeld-Jimbo deformations. (\cite[Corollary 3.8, Corollary 4.7, Theorem 4.8]{MR4728596})

\introthm{Proposition}{\ref{prop:imprimitivity}}
{
Let $\G$ be a compact quantum group and $\map{q}{C(\G)}{C(\G_\Kac)}$
be the maximal Kac quantum subgroup.
Then, for any finite index right coideal $B$ of $C(\G)$,
there is a finite index right coideal $B_{\Kac}$ of $C(\G_{\Kac})$
such that $\cl{B} = q^{-1}(\cl{B}_{\Kac})$,
where $\cl{B}$ and $\cl{B}_{\Kac}$ are the algebraic cores of $B$
and $B_{\Kac}$ respectively. Moreover we have $[C(\G):B] = [C(\G_{\mathrm{Kac}}):B_{\mathrm{Kac}}]$.
}

The second objective of this paper is
to reveal a significant restriction on finite index quantum subgroups
of discrete quantum groups. We say that a discrete quantum
subgroup $\Gamma'$ of $\Gamma$ is \emph{of finite index}
if $C(\hat{\Gamma'})$ is a finite index right coideal of $C(\hat{\Gamma})$.
Then Proposition \ref{prop:imprimitivity} has the following consequence. 
\introthm{Theorem}{\ref{thrm:reduction to the unimodularization}}
{
Let $\Gamma$ be a discrete quantum group and $\map{q}{\Gamma}{\Gamma_{\uni}}$ be its unimodularization. Then there is a natural 1-to-1
correspondence between the following:
\begin{itemize}
 \item finite index quantum subgroups of $\Gamma$,
 \item finite index subhypergroups of $\stker q_{\hyp}\backslash \Gamma_{\hyp}/\stker q_{\hyp}$.
\end{itemize}
}
Here we omit the definition of $\stker q_{\hyp}\backslash \Gamma_{\hyp}/\stker q_{\hyp}$, which is a hypergroup in general.
In some interesting cases it is a group which gives a grading on
$\rep^{\fin}\hat{\Gamma}$.

This result provides a general strategy for classifing
finite index discrete quantum subgroups of a discrete quantum group $\Gamma$: find a discrete quantum group $\Gamma'$ whose associated
hypergroup $\Gamma'_{\hyp}$ is isomorphic to $\Gamma_{\hyp}$
while ensuring its unimodularization $\Gamma'_{\uni}$ is small enough.
Actually we have the following reduction theorem on
finite index quantum subgoups of the free products of the dual of
connected simply-connected compact Lie groups
as a successful example of this strategy.
\introthm{Corollary}{\ref{coro:findexsqg}}
{
Let $(K_{\lbd})_{\lbd \in \Lambda}$ be a family
of connected simply-connected compact Lie groups.
Then there is a bijection between the set of finite index quantum subgroups of $\bigast_{\lbd \in \Lambda} \hat{K_{\lbd}}$ and the set of finite index subgroups
of $\bigast_{\lbd \in \Lambda} P_{\lbd}/Q_{\lbd}$, where $P_{\lbd}$
and $Q_{\lbd}$ are the weight lattice and the root lattice
associated to $K_{\lbd}$.
}

This paper is organized as follows. In Section 3, we investigate
right coideals of compact quantum groups with some finiteness conditions. We first show Proposition \ref{prop:characterizations of finite index right coideals}, with
part of its proof postponed to Section 4, where the notion of
maximal Kac quantum subgroup is explained. The remaining part of
Section 3 is devoted to some formulae on index of quantum subgroup of discrete quantum group.

In Section 4 we prove some imprimitivity theorems on finite index right
coideals and finite index quantum subgroups, after giving a brief
review on the maximal Kac quantum subgroup.

In Section 5 we discuss an application of the imprimitivity theorem
on finite index quantum subgroups to free products
of discrete quantum groups.

\section{Preliminaries}
For a Hilbert space $H$, its inner product $\ip{\tend,\tend}$ is
$\C$-linear in the second argument. An element $\xi \in H$ is regarded as an operator from $\C$ to $H$.

For a $\C$-vector space $V$, its algebraic dual is denoted by $V^{\vee}$. The image of $v \in V$ under the canonical embedding $V \subset V^{\vee}$ is denoted by $\mathrm{ev}_v$

In this paper, the symbol $\textendash\tensor\textendash$ denotes
the spatial tensor product of C*-algebras, the algebraic tensor
product of $\C$-vector spaces and the exterior tensor product of 
Hilbert C*-modules.

\subsection{Compact quantum groups and their morphisms}\label{subsec:cqg}
In this subsection, we give a brief review on compact quantum groups.
See \cite{MR3204665} for detailed discussions.

A \emph{compact quantum group} is a pair $\G = (A,\Delta)$
of a non-zero unital C*-algebra $A$ and a \astar-homomorphism
$\map{\Delta}{A}{A\otimes A}$ which satisfies the following conditions:
\begin{itemize}
 \item the coassociativity $(\Delta\tensor\id)\Delta = (\id\tensor\Delta)\Delta$,
 \item the cancellation property $\bar{(A\otimes \C)\Delta(A)} = \bar{(\C\tensor A)\Delta(A)} = A\tensor A$.
\end{itemize}
In this case $A$ is denoted by $C(\G)$. A \emph{Haar state} on $\G$ is a
state $h$ on $C(\G)$ satisfying the bi-invariance property
$(h\tensor\id)\Delta(x) = (\id\tensor h)\Delta(x) = h(x)1_{\G}$.
Such a state always exists and is unique.
We say that $\G$ is \emph{reduced} if $h$ is faithful.
We only consider reduced compact quantum groups unless otherwise noted.

A \emph{unitary representation} of $\G$ is a pair
$\pi = (H_{\pi},U_{\pi})$ of a Hilbert space $H_{\pi}$ and
a unitary $U_{\pi} \in M(\cl{K}(H_{\pi}) \otimes C(\G))$
which satisfies $(\id\otimes\Delta)(U_{\pi}) = U_{\pi,12}U_{\pi,13}$.
The category of unitary representations of $\G$ is denoted by $\rep\G$,
and the full subcategory consisting of the finite dimensinal
representations is denoted by $\rep^{\fin} \G$. It is known
that this is semisimple and naturally made into a rigid \cstar-tensor
category with a simple unit. We fix a complete set of mutually
inequivalent irreducible representations of $\G$, which is
denoted by $\irr\G$.

Let $\pi$ be a finite dimensional unitary representation of $\G$ and
$\xi,\,\eta$ be elements of $H_{\pi}$.
Then $(\xi^*\otimes 1)U_{\pi}(\eta\tensor 1)$ defines an element of
$C(\G)$, called a \emph{matrix coefficient} of $\pi$.
The set of all matrix coefficients of all finite dimensional unitary
representations is
called the \emph{algebraic core} of $\G$ and denoted by $\cl{O}(\G)$.
We can make $\cl{O}(\G)$ into a Hopf \astar-algebra by using the product and
the coproduct of $C(\G)$. The counit and the antipode are denoted by
$\eps$ and $S$ respectively.

Let $\H$ be another compact quantum group. A \emph{morphism}
from $\G$ to $\H$ is a  homomorphism of Hopf \astar-algebras from
$\cl{O}(\H)$ to $\cl{O}(\G)$. One should note that
it need not be defined on $C(\H)$. Nonetheless this induces a
\cstar-tensor functor from $\rep^{\fin}\H$ to $\rep^{\fin}\G$
since $U_{\pi} \in B(H_{\pi})\tensor \cl{O}(\H)$
for any finite dimensional representation $\pi \in \rep^{\fin}\H$.

A \emph{quotient quantum group} of $\G$
is a pair of a compact quantum group $\H$ and a morphism
from $\G$ to $\H$ which is injective as a map from
$\cl{O}(\H)$ to $\cl{O}(\G)$. In this case the map extends to
an injective \astar-homomorphism from $C(\H)$ to $C(\G)$
since the Haar state on $\G$ restricts to the Haar state on $\H$.
On the other hand, a unital \cstar-subalgebra $B$ of $C(\G)$ satisfying
$\Delta(B) \subset B\tensor B$ and the cancellation property
gives rise to a quotient quantum group of $\G$.

For a quotient quantum group $\H$ of $\G$, the induced functor
from $\rep^{\fin}\H$ to $\rep^{\fin}\G$ is fully faithful and
has an image closed under tensor products, conjugate objects
and subobjects.
In particular it determines a subset of equivalence classes of
irreducible objects in $\rep^{\fin}\G$ containing the tensor unit
$\mathbf{1}_{\G}$ and closed under tensor products and conjugate
objects.
Conversely, such a subset $I$
defines a quotient quantum group of $\G$ by considering $B$,
the closed linear span of all matrix coefficients of $\pi \in I$.

As a conclusion, we have natural bijections among the following sets:

\begin{itemize}
 \item Isomorphism classes of quotient quantum groups of $\G$,
 \item Unital \cstar-subalgebras of $C(\G)$ closed under the coproduct
and satisfying the cancellation property,
 \item Subsets of $\irr\G$ closed under tensor products and conjugate objects.
\end{itemize}
\subsection{Discrete quantum groups and their morphisms} \label{subsec:dqg}
It is possible to formulate the notion of discrete quantum group in
the framework of locally compact quantum groups. See, for instance, \cite{MR2276175} for this approach.

To simplify the description, we treat discrete quantum group
simply as a dual notion of compact quantum group in this article.
Hence a \emph{discrete quantum group} $\Gamma$ is
a compact quantum group, denoted by $\hat{\Gamma}$,
and a \emph{morphism} from $\Gamma$ to another discrete quantum group $\Lambda$
is a morphism from $\hat{\Lambda}$ to $\hat{\Gamma}$.
Then we can give a natural definition of a \emph{quantum subgroup}
of $\Gamma$,
which is defined as a pair of a discrete quantum group $\Lambda$ and
a morphism from $\Lambda$ to $\Gamma$ such that $\hat{\Lambda}$ and the
corresponding morphism $\hat{\Gamma}\longrightarrow \hat{\Lambda}$
give a quotient of $\hat{\Gamma}$. This definition is equivalent
to Vaes's definition and Woronowicz's definition as shown in
\cite[Theorem 3.5, Theorem 6.2]{MR2980506} and \cite[Lemma 4.1]{MR4742819}.

In spite of our abstract approach, it is still possible to describe
\emph{function algebras} on a discrete quantum group.
By taking the dual of $\Delta$,
we introduce a product on the algebraic dual
$c(\Gamma) := \cl{O}(\hat{\Gamma})^{\vee}$, whose multiplicative unit is
the counit $\eps$ on the Hopf \astar-algebra $\cl{O}(\hat{\Gamma})$.
Moreover this algebra
is endowed with a natural \astar-structure defined by
$\omega^*(x) = \bar{\omega(S(x)^*)}$. Then the Peter-Weyl decomposition
$\cl{O}(\hat{\Gamma})\cong \bigoplus_{\pi \in \irr{\hat{\Gamma}}}B(H_{\pi})$ induces a \astar-isomorphism
$c(\Gamma)\cong \prod_{\pi \in \irr\hat{\Gamma}} B(H_{\pi})$.
Similarly we also have $c(\Gamma)\hat{\tensor}c(\Gamma) := (\cl{O}(\hat{\Gamma})\tensor \cl{O}(\hat{\Gamma}))^{\vee}\cong \prod_{\rho,\sigma \in \irr \hat{\Gamma}} B(H_{\rho})\tensor B(H_{\sigma})$.
Then the coproduct $\map{\Delta}{c(\Gamma)}{c(\Gamma)\hat{\tensor}c(\Gamma)}$, which is just the dual of the multiplication $\map{m}{\cl{O}(\G)\tensor \cl{O}(\G)}{\cl{O}(\G)}$, is presented as follows:
\begin{align*}
\text{the $(\rho,\sigma)$-component of } \Delta(X) &= \sum_{\pi \in \irr\hat{\Gamma}}\sum_{V \in \onb(\pi,\rho\tensor\sigma)}VX_{\pi}V^*,
\end{align*}
where the summation is defined as follows: take a complete orthogonal family $(V_i)_{i = 1}^k$ of isometries from $\pi$ to $\rho\tensor\sigma$
and consider $V_1X_{\pi}V_1^* + V_2X_{\pi}V_2^* + \cdots + V_nX_{\pi}V_n^*$.
By taking the uniformly bounded part, we obtain a
von Neumann algebra $\ell^{\infty}(\Gamma)$
and $\map{\Delta}{\ell^{\infty}(\Gamma)}{\ell^{\infty}(\Gamma)\bar{\tensor}\ell^{\infty}(\Gamma)}$. Note that any $x \in \cl{O}(\hat{\Gamma})$
defines a normal linear functional $\mathrm{ev}_x$ on $\ell^{\infty}(\Gamma)$.

Given a morphism $\map{\phi}{\Gamma'}{\Gamma}$ of discrete quantum
group, it induces a \astar-homomorphism $\map{\phi^*}{\ell^{\infty}(\Gamma)}{\ell^{\infty}(\Gamma')}$. When $\Gamma'$ is a quantum subgroup of $\Gamma$,
this homomorphism is the projection after regarding $\irr\hat{\Gamma'}$ as a subset of $\irr\hat{\Gamma}$:
\begin{align*}
 i^*\colon \ell^{\infty}(\Gamma)\longrightarrow \ell^{\infty}(\Gamma'),\quad (X_{\pi})_{\pi \in \irr\hat{\Gamma}}\longmapsto (X_{\pi})_{\pi \in \irr\hat{\Gamma'}}.
\end{align*}
We define $\ell^{\infty}(\Gamma/\Gamma')$ and $\ell^{\infty}(\gbag)$ as follows:
\begin{align*}
 \ell^{\infty}(\Gamma/\Gamma') &:= \{X \in \ell^{\infty}(\Gamma)\mid (\id\tensor i^*)\Delta(X) = X\tensor 1\} \\
& = \{X \in \ell^{\infty}(\Gamma) \mid (\id\tensor \mathrm{ev}_{x})\Delta(X)= \eps(x)X \text{ for all }x \in \cl{O}(\hat{\Gamma'})\},\\
 \ell^{\infty}(\Gamma'\backslash \Gamma) &:= \{X \in \ell^{\infty}(\Gamma)\mid (i^*\tensor \id)\Delta(X) = 1\tensor X\} \\
& = \{X \in \ell^{\infty}(\Gamma)\mid (\mathrm{ev}_{x}\tensor \id)\Delta(X)= \eps(x)X \text{ for all }x \in \cl{O}(\hat{\Gamma'})\}.
\end{align*}
These are also $\ell^{\infty}$-products of matrix algebras since
so is $\ell^{\infty}(\Gamma)$.
Note that these are anti-isomorphic by the antipode on $\ell^{\infty}(\Gamma)$.

\subsection{Hypergroup} \label{subsec:hypergroup}
See \cite{MR4696701} for fundamental facts on hypergroups.
\begin{defn}
 A \emph{hypergroup} is a triple $(H,\star,e)$ consisting of a set $H$,
a map $\map{\star}{H\times H}{\cl{P}(H)}$ and $e \in H$ with
the following conditions:
\begin{enumerate}
 \item For all $x,y,z \in H$, $(x\star y)\star z = x\star (y\star z)$.
 \item For all $x \in H$, $x\star e = \{x\}$.
 \item There exists a map $\map{\tend}{H}{H}$, called an inversion function, such that $y \in \bar{x}\star z$ and $x \in z\star\bar{y}$ hold when $z \in x\star y$.
\end{enumerate}
\end{defn}
We often omit $\star$ if there is no confusion.
\textbf{We use a notion of morphism between hypergroups, which is
different from that in literature} like \cite{MR4696701}.
A \emph{morphism} from $H$ to another hypergroup $H'$
is a map $\map{\phi}{H}{\cl{P}(H')}$ such that $f(e) = \{e\}$ and
$f(x\star y)= f(x)\star f(y)$.
This definition has
an advantage in the functorial construction of hypergroups
from discrete quantum groups.

Let $\Gamma$ be a discrete quantum group. We associate a hypergroup
$\Gamma_{\hyp}$ to $\Gamma$, whose underlying set is $\irr\hat{\Gamma}$, equipped with $\mathbf{1}$ as a neutral element.
The muliti-valued product is induced from the monoidal structure
on $\rep^{\fin}\hat{\Gamma}$:
\begin{align*}
 \pi\star\rho := \{\sigma \in \irr\hat{\Gamma}\mid \sigma < \pi\tensor \rho\},
\end{align*}
where $\sigma < \pi\tensor\rho$ means that $\sigma$ is contained in
$\pi\tensor \rho$ as a direct summand.
For any morphism $\map{\phi}{\Gamma'}{\Gamma}$, we associate
a morphism $\map{\phi_{\hyp}}{\Gamma'_{\hyp}}{\Gamma_{\hyp}}$ defined
as follows:
\begin{align*}
 \phi_{\hyp}(\pi') := \{\pi \in \Gamma_{\hyp}\mid \pi < \phi_*(\pi')\},
\end{align*}
where $\map{\phi_{\ast}}{\rep^{\fin}\hat{\Gamma'}}{\rep^{\fin}\hat{\Gamma}}$ is the induced \cstar-tensor functor.
In general we have $\abs{\phi_{\hyp}(\pi')} \neq 1$, hence it is natural to consider multi-valued maps as morphisms of hypergroups.
However, in the case that $\Gamma'$ is a quantum subgroup of $\Gamma$, the induced morphism from $\Gamma'_{\hyp}$ to $\Gamma_{\hyp}$ is single-valued and injective. Moreover its image forms a hypergroup with
the induced multiplication. 

A \emph{subhypergroup} of a hypergroup $H$
is a hypergroup whose underlying set is a subset of $H$ and
whose structure is also induced from $H$. In \cite{MR4696701},
a subhypergroup in our sense is called as a \emph{closed subset}.

The conclusion of Subsection \ref{subsec:cqg} implies
the following proposition.

\begin{prop} \label{prop:hypergroup description of quantum subgroups}
 Let $\Gamma$ be a discrete quantum group. Then there is a
canonical bijection between the following:
\begin{itemize}
 \item Isomorphism classes of quantum subgroups of $\Gamma$.
 \item Subhypergroups of $\Gamma_{\hyp}$.
\end{itemize}
\end{prop}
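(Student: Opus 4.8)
The plan is to chain the three-fold correspondence recorded at the end of Subsection~\ref{subsec:cqg} with the definition of quantum subgroup from Subsection~\ref{subsec:dqg}, and then to recognize the resulting combinatorial data as nothing but the definition of a subhypergroup of $\Gamma_{\hyp}$.

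First I would unfold the definition of a quantum subgroup. By definition a quantum subgroup of $\Gamma$ is a pair of a discrete quantum group $\Lambda$ and a morphism $\Lambda\to\Gamma$ (equivalently, a morphism $\hat{\Gamma}\to\hat{\Lambda}$ of compact quantum groups) such that $\hat{\Lambda}$, together with this morphism, is a quotient quantum group of $\hat{\Gamma}$; two such are isomorphic exactly when the corresponding quotients of $\hat{\Gamma}$ are. Hence isomorphism classes of quantum subgroups of $\Gamma$ are in canonical bijection with isomorphism classes of quotient quantum groups of $\hat{\Gamma}$. Applying the bijections from Subsection~\ref{subsec:cqg} with $\G=\hat{\Gamma}$, the latter are in turn in canonical bijection with the subsets $I\subseteq\irr\hat{\Gamma}$ that contain $\mathbf{1}$ and are closed under tensor products and conjugate objects. (Here containing $\mathbf{1}$ is equivalent to being non-empty, since any $\pi\in I$ forces $\bar{\pi}\in I$ and $\mathbf{1}<\pi\tensor\bar{\pi}$.)

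It then remains to match these subsets with subhypergroups of $\Gamma_{\hyp}$, which is a comparison of definitions. By the construction of $\Gamma_{\hyp}$ in Subsection~\ref{subsec:hypergroup}, its product is $\pi\star\rho=\{\sigma\mid\sigma<\pi\tensor\rho\}$, its inversion is the conjugate, and its neutral element is $\mathbf{1}$. Thus the conditions that $I$ contain $\mathbf{1}$, be closed under tensor products and be closed under conjugates say exactly that $I$ contains the neutral element and is closed under $\star$ and under inversion, i.e.\ that $I$ is a subhypergroup of $\Gamma_{\hyp}$; the hypergroup axioms for $I$ are then inherited from $\Gamma_{\hyp}$ for free. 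To be complete I would also verify that the subhypergroup attached to a quantum subgroup $\Gamma'$ agrees, as a hypergroup, with the image of $\Gamma'_{\hyp}$ under the induced morphism $\phi_{\hyp}$: since $\phi_*$ is a fully faithful \cstar-tensor functor it carries irreducibles to irreducibles and commutes with tensor products, so $\phi_{\hyp}$ is the single-valued injection $\irr\hat{\Gamma'}\hookrightarrow\irr\hat{\Gamma}$ with image $I$ that transports the product on $\Gamma'_{\hyp}$ to $\star$ on $I$, exactly as noted before the statement.

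The main obstacle is conceptual rather than computational: one must confirm that the notion of subhypergroup (a \emph{closed subset} in the terminology of \cite{MR4696701}) encodes precisely these closure conditions and nothing further, and that the structure $I$ inherits from $\Gamma_{\hyp}$ is genuinely the one transported along $\phi_{\hyp}$. Once the definitions are aligned the bijection is forced, and associativity, the neutral element and the inversion axiom for $I$ all come for free from the ambient hypergroup $\Gamma_{\hyp}$.
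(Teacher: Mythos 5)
Your proposal is correct and follows exactly the paper's route: the paper gives no separate proof, simply asserting that the proposition follows from the three-fold correspondence at the end of Subsection~\ref{subsec:cqg} (quotient quantum groups of $\hat{\Gamma}$ $\leftrightarrow$ subsets of $\irr\hat{\Gamma}$ closed under tensor products and conjugates), combined with the definitions of quantum subgroup and subhypergroup. Your write-up is a careful unfolding of that same chain of identifications, including the useful observation that non-emptiness is equivalent to containing $\mathbf{1}$.
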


Let $G$ be a hypergroup and $H$ be a subhypergroup of $G$.
Then we have an equivalence relation $\sim_H$ on $G$ defined
as $x\sim_H y$ precisely when $x \in y\star H$. We call each
equivalence class a \emph{coset} and
the quotient set a \emph{coset space},
which is denoted by $G/H$. The coset containing $x \in G$ coincides
with $x\star H$. In a similar way $H\backslash G$ and $H\backslash G/H$
is also defined. Note that $H\backslash G/H$ has a natural hypergroup structure, which is induced from $G$.

Let $\Gamma'$ be a quantum subgroup of a discrete quantum group $\Gamma$. Note that
$\ell^{\infty}(\Gamma/\Gamma')\not\cong\ell^{\infty}(\Gamma_{\hyp}/\Gamma'_{\hyp})$ in general since $\ell^{\infty}(\Gamma/\Gamma')$ can be
non-commutative. Nonetheless it is still possible to relate
$\Gamma_{\hyp}/\Gamma'_{\hyp}$ to $\Gamma/\Gamma'$.
We regard $\ell^{\infty}(\Gamma_{\hyp}/\Gamma'_{\hyp})$ as a
von Neumann subalgebra of $\ell^{\infty}(\Gamma)$ by
$f\longmapsto (f(\pi\Gamma'_{\hyp})1_{\cl{H}_{\pi}})_{\pi \in \Gamma_{\hyp}}$. Similarly $\ell^{\infty}(\Gamma'_{\hyp}\backslash \Gamma_{\hyp})$
is considered as a von Neumann subalgebra of $\ell^{\infty}(\gbag)$

\begin{lemm} \label{lemm:comparison of quantum functions and functions}
The algebra $\ell^{\infty}(\Gamma_{\hyp}/\Gamma'_{\hyp})$ (resp. $\ell^{\infty}(\Gamma'_{\hyp}\backslash \Gamma_{\hyp})$) is contained in
$Z(\ell^{\infty}(\Gamma/\Gamma'))$ (resp. $Z(\ell^{\infty}(\gbag))$).
Moreover each direct summand $\delta_{\pi\Gamma'_{\hyp}}\ell^{\infty}(\Gamma/\Gamma')$ (resp. $\delta_{\Gamma'_{\hyp}\pi}\ell^{\infty}(\gbag)$)
is finite dimensional.
\end{lemm}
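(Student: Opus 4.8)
The plan is to verify the inclusion and centrality by a direct computation with the coproduct, and then to obtain finite dimensionality by solving the defining coideal equation on a single matrix block. I will only treat the $\Gamma/\Gamma'$ statements; the statements for $\gbag$ follow formally by applying the antipode on $\ell^{\infty}(\Gamma)$, which is the anti-isomorphism exchanging $\ell^{\infty}(\Gamma/\Gamma')$ with $\ell^{\infty}(\gbag)$ and carrying $\ell^{\infty}(\Gamma_{\hyp}/\Gamma'_{\hyp})$ onto $\ell^{\infty}(\Gamma'_{\hyp}\backslash\Gamma_{\hyp})$ (the hypergroup inversion $\pi\mapsto\bar\pi$ sends left cosets to right cosets). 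To prove $\ell^{\infty}(\Gamma_{\hyp}/\Gamma'_{\hyp})\subseteq\ell^{\infty}(\Gamma/\Gamma')$ I would take $f\in\ell^{\infty}(\Gamma_{\hyp}/\Gamma'_{\hyp})$ with image $X=(f(\pi\Gamma'_{\hyp})1_{H_{\pi}})_{\pi}$ and test the condition $(\id\tensor i^{*})\Delta(X)=X\tensor1$. Using the stated formula for $\Delta$ together with $\sum_{V\in\onb(\tau,\rho\tensor\sigma)}VV^{*}=P_{\tau}$, the orthogonal projection of $H_{\rho}\tensor H_{\sigma}$ onto its $\tau$-isotypic component, the $(\rho,\sigma)$-component of $(\id\tensor i^{*})\Delta(X)$ equals $\sum_{\tau<\rho\tensor\sigma}f(\tau\Gamma'_{\hyp})P_{\tau}$ for $\rho\in\irr\hat{\Gamma}$ and $\sigma\in\irr\hat{\Gamma'}$. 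The point is that $\tau<\rho\tensor\sigma$ with $\sigma\in\Gamma'_{\hyp}$ means $\tau\in\rho\star\sigma\subseteq\rho\star\Gamma'_{\hyp}$, i.e. $\tau\Gamma'_{\hyp}=\rho\Gamma'_{\hyp}$; hence every coefficient equals $f(\rho\Gamma'_{\hyp})$, and since $\sum_{\tau<\rho\tensor\sigma}P_{\tau}=1_{H_{\rho}\tensor H_{\sigma}}$ the sum collapses to $f(\rho\Gamma'_{\hyp})1_{H_{\rho}\tensor H_{\sigma}}=X_{\rho}\tensor1_{H_{\sigma}}$, which is exactly the defining condition.

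Centrality is then immediate: each such $X$ is block-scalar, so it lies in $Z(\ell^{\infty}(\Gamma))=\prod_{\pi}\C 1_{H_{\pi}}$ and therefore commutes with the whole subalgebra $\ell^{\infty}(\Gamma/\Gamma')$ while belonging to it. For the finiteness claim, fix a coset $C=\pi_{0}\Gamma'_{\hyp}$; by the above, $\delta_{C}$ is a central projection and $\delta_{C}\ell^{\infty}(\Gamma/\Gamma')$ is the von Neumann algebra of those $X\in\ell^{\infty}(\Gamma/\Gamma')$ supported on $C$. I would bound its dimension by showing that the evaluation $X\mapsto X_{\pi_{0}}$ into $B(H_{\pi_{0}})$ is injective. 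Specializing the defining equation to $\rho=\pi_{0}$ and any $\sigma\in\irr\hat{\Gamma'}$, and assembling the isometries in $\onb(\tau,\pi_{0}\tensor\sigma)$ into a single isometry $W_{\tau}$ with range the $\tau$-isotypic component, compression by $P_{\tau}$ gives $W_{\tau}(X_{\tau}\tensor1)W_{\tau}^{*}=P_{\tau}(X_{\pi_{0}}\tensor1_{H_{\sigma}})P_{\tau}$, so $X_{\tau}$ is completely determined by $X_{\pi_{0}}$ whenever $\tau<\pi_{0}\tensor\sigma$. Because $C=\pi_{0}\star\Gamma'_{\hyp}$ is precisely the set of $\tau$ arising this way for some $\sigma\in\Gamma'_{\hyp}$, every block $X_{\tau}$ with $\tau\in C$ is determined by $X_{\pi_{0}}$; as $X$ is supported on $C$, $X_{\pi_{0}}=0$ forces $X=0$. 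Thus $\dim\delta_{C}\ell^{\infty}(\Gamma/\Gamma')\le(\dim H_{\pi_{0}})^{2}<\infty$.

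The routine parts are the projection identity and the compression by $P_{\tau}$; the step that carries the weight is recognizing that the coset characterization $C=\pi_{0}\star\Gamma'_{\hyp}$ is exactly what guarantees the coideal relation reaches every block of $C$ from the single base block $\pi_{0}$, so that only the easy (injectivity) direction of the reconstruction is needed and no consistency check on the recovered $X_{\tau}$ is required. The one place to be careful is organizing the intertwiner isometries $W_{\tau}$ and the multiplicity spaces so that $X_{\tau}\tensor 1$ genuinely recovers $X_{\tau}$; once that bookkeeping is fixed, the argument is purely formal.
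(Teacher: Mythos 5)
Your proof is correct and is essentially the paper's own argument: the paper likewise gets the inclusion and centrality directly from the form of the coproduct, and proves finite dimensionality by showing that the projection of $\delta_{\pi_0\Gamma'_{\hyp}}\ell^{\infty}(\Gamma/\Gamma')$ onto the single block $B(H_{\pi_0})$ is faithful, using the coideal relation on the $(\pi_0,\sigma)$-components for $\sigma \in \Gamma'_{\hyp}$ together with the fact that the coset $\pi_0\Gamma'_{\hyp}$ equals $\bigcup_{\sigma\in\Gamma'_{\hyp}}\pi_0\star\sigma$. Your explicit intertwiner compressions via the isometries $W_\tau$, and the antipode reduction for the right-coset statements, are just more detailed renderings of what the paper phrases as faithfulness of a representation and as ``the same argument works.''
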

\begin{proof}
 The first statement follows from the definition of the coproduct on
$\ell^{\infty}(\Gamma)$. To show the second statement,
take $\pi \in \Gamma_{\hyp}$. Then we have a unital
\astar-representation of $\delta_{\pi\Gamma'_{\hyp}}\ell^{\infty}(\Gamma/\Gamma')$
defined by the projection to $B(\cl{H}_{\pi})$. It suffices to show
that this is faithful. Take $\rho \in \Gamma'_{\hyp}$.
Then $\pi\tensor \rho$ also defines a unital representation of
$\delta_{\pi\Gamma'_{\hyp}}\ell^{\infty}(\Gamma/\Gamma')$, whose direct summands
are precisely elements of $\pi\star\rho \subset \Gamma_{\hyp}$ by
definition of the coproduct on $\ell^{\infty}(\Gamma)$. On the other hand,
since $(\id\tensor \rho)\Delta(x) = x\tensor 1$ for $x \in \ell^{\infty}(\Gamma/\Gamma')$ and $\rho \in \Gamma'_{\hyp}$, we have
$(\pi\tensor\rho)\Delta(x) = 0$ if
$x \in \delta_{\pi\Gamma'_{\hyp}}\ell^{\infty}(\Gamma/\Gamma')$ and $\pi(x) = 0$.
Hence we have $\sigma(x) = 0$ for such $x$ and $\sigma \in \pi\star\rho$,
and the faithfulness of $\pi$ since $\pi\star\Gamma'_{\hyp}$ is the union of $(\pi\star\rho)_{\rho \in \Gamma'_{\hyp}}$.

The same argument works for $\gbag$ and $\Gamma'_{\hyp}\backslash \Gamma_{\hyp}$.
\end{proof}

\subsection{Right coideals and associated module categories}
For the definition and discussions on \cstar-tensor categories,
see \cite{MR3204665}. See also \cite{MR3121622} for
the definition and
fundamental notions on left module \cstar-categories over
\cstar-tensor categories.

Typical examples of left \cstar-module categories over a \cstar-tensor categories arise from right coideals of a compact quantum group $\G$, i.e.,
a \cstar-subalgebra $B \subset C(\G)$ such that
$\Delta(B) \subset B \tensor C(\G)$.
We can form a \cstar-category $\G\text{-}\mod_B$ of $\G$-equivariant Hilbert $B$-modules, which has a canonical structure of
a left $\rep^{\fin}\G$-module \cstar-category. See \cite{MR3675047} for the definition of $\G$-equivariant Hilbert modules and fundamental facts on them.

We say that a $\G$-equivariant Hilbert $B$-module $M$ is
\emph{finitely generated} if it is generated by a finite dimensional
$\G$-invariant subspace. It is known that such a module
has an embedding into $H_{\pi}\tensor B$ for some $\pi \in \rep^{\fin}\G$ as a direct summand. In particular $M$ is finitely generated projective as right $B$-module. The full subcategory consisting of
finitely generated $\G$-equivariant Hilbert $B$-module is denoted by $\G\text{-}\mod^{\fin}_B$.

We also have another way to construct a left $\rep^{\fin}\G$-module
\cstar-category from $B$. Using the algebraic core
$\cl{B} := B \cap \cl{O}(\G)$, we define a von Neumann subalgebra
$B^{\perp} \subset \ell^{\infty}(\hat{\G})$ as follows:
\begin{align*}
 B^{\perp} := \{X \in \ell^{\infty}(\hat{\G})\mid (\id\tensor \mathrm{ev}_b)\Delta(X) = \eps(b)X\}.
\end{align*}
Then this is a left coideal von Neumann subalgebra of $\ell^{\infty}(\hat{\G})$, i.e., it satisfies $\Delta(B^{\perp}) \subset \ell^{\infty}(\hat{\G})\bar{\tensor} B^{\perp}$.
Hence the \cstar-category of finite dimensional
normal \astar-representations of $B^{\perp}$, denoted by
$\rep^{\fin} B^{\perp}$, has a natural structure of left $\rep^{\fin}\G$-module \cstar-category since $\rep^{\fin}\G\cong \rep^{\fin} \ell^{\infty}(\hat{\G})$.
The following comparison was essentially pointed out by
K. De Commer and J.R. Dzokou Talla in \cite{MR4776189}.
See also \cite[Theorem 1]{MR549940} for a similar statement
in the algebraic setting.

\begin{prop} \label{prop:dual picture of module category}
As a left $\rep^{\fin}\G$-module \cstar-category, $\rep^{\fin} B^{\perp} \cong \G\text{-}\mod_B^{\fin}$.
\end{prop}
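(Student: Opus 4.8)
The plan is to produce an explicit $\rep^{\fin}\G$-module $\cstar$-functor
\[
F\colon \rep^{\fin}B^{\perp}\longrightarrow \G\text{-}\mod_B^{\fin}
\]
and to show it is a unitary equivalence of module categories. I would first define $F$ on a convenient generating family and then extend by finite direct sums and splitting of idempotents. The family I have in mind consists of the restrictions $\pi|_{B^{\perp}}$, where $\pi \in \irr\G$ is viewed as a normal representation of $\ell^{\infty}(\hat{\G})$ through the identification $\rep^{\fin}\G\cong\rep^{\fin}\ell^{\infty}(\hat{\G})$ and then restricted to the subalgebra $B^{\perp}$. Since the product embedding $B^{\perp}\hookrightarrow\prod_{\pi}B(H_{\pi})$ is faithful and normal, every irreducible representation of $B^{\perp}$ appears in some finite-dimensional $\pi|_{B^{\perp}}$, so this family additively generates $\rep^{\fin}B^{\perp}$.

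On this family I would set $F(\pi|_{B^{\perp}}) := H_{\pi}\tensor B$, the free $\G$-equivariant Hilbert $B$-module, which is the image of $\pi$ under the module action applied to the unit object $B$. The definition on morphisms rests on the comparison
\[
\Hom_{B^{\perp}}(H_{\pi},H_{\rho})\cong \Hom_{\G\text{-}\mod_B}(H_{\pi}\tensor B,\,H_{\rho}\tensor B),
\]
which is the technical heart. To establish it I would use the free-module adjunction to identify the right-hand side with the space of $\G$-equivariant maps $H_{\pi}\to H_{\rho}\tensor\cl{B}$, where $\cl{B}=B\cap\cl{O}(\G)$ carries the right coaction $\Delta|_{\cl{B}}$. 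On the other side, rewriting the defining condition of $B^{\perp}$ through the pairing between $\cl{O}(\G)$ and $\ell^{\infty}(\hat{\G})$ shows that $X\in B^{\perp}$ precisely when $\langle X,ab\rangle=\eps(b)\langle X,a\rangle$ for all $a\in\cl{O}(\G)$, $b\in\cl{B}$; that is, $B^{\perp}$ is the annihilator of the right ideal $\cl{O}(\G)\cl{B}^{+}$, with $\cl{B}^{+}=\cl{B}\cap\ker\eps$. With this reformulation the nondegeneracy of the pairing should match $B^{\perp}$-intertwiners with the multiplicity space above, compatibly with composition and adjoints, making $F$ a fully faithful $\cstar$-functor on the generating family. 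This is close in spirit to Takeuchi's relative Hopf module equivalence (\cite{MR549940}) and to the comparison of \cite{MR4776189}.

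I would then extend $F$ to all of $\rep^{\fin}B^{\perp}$ by taking finite direct sums and splitting the idempotents supplied by full faithfulness; since both sides are idempotent-complete $\cstar$-categories, the extension is unique up to unitary natural isomorphism and stays fully faithful. Essential surjectivity is immediate: every finitely generated $\G$-equivariant Hilbert $B$-module embeds as a direct summand of some $H_{\pi}\tensor B=F(\pi|_{B^{\perp}})$, hence lies in the image. To see that $F$ is a module functor I would exhibit natural unitaries $F(\sigma\tensor V)\cong H_{\sigma}\tensor F(V)$; on generators this reduces to $\sigma\tensor(\pi|_{B^{\perp}})\cong(\sigma\tensor\pi)|_{B^{\perp}}$, which is exactly the left-coideal property $\Delta(B^{\perp})\subset\ell^{\infty}(\hat{\G})\bar{\tensor}B^{\perp}$ defining the module structure on $\rep^{\fin}B^{\perp}$, matched against $H_{\sigma\tensor\pi}\tensor B\cong H_{\sigma}\tensor(H_{\pi}\tensor B)$, and the coherence constraints follow by naturality.

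The main obstacle is the Hom-space identification in the second paragraph: converting the annihilator description of $B^{\perp}$ into an honest isomorphism with the space of equivariant maps $H_{\pi}\to H_{\rho}\tensor\cl{B}$ requires controlling the interplay between the pairing and the two coactions, and—on the analytic side—checking that the resulting algebraic isomorphism is isometric and extends from the algebraic cores to the $\cstar$-level, so that $F$ is genuinely a unitary $\cstar$-functor rather than merely a linear equivalence.
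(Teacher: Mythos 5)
Note first that the paper does not actually prove this proposition: it is attributed to \cite{MR4776189} (with \cite{MR549940} as an algebraic precursor), so your attempt has to be judged against what a complete argument requires rather than against an in-paper proof. Your architecture is the natural one and is essentially the right skeleton: reduce to the generators, identify $\Hom_{\G\text{-}\mod_B}(H_{\pi}\tensor B, H_{\rho}\tensor B)$ with the space of $\G$-equivariant maps $H_{\pi}\to H_{\rho}\tensor\cl{B}$, compare that with $\Hom_{B^{\perp}}(H_{\pi},H_{\rho})$, and then handle direct sums, idempotent splitting, essential surjectivity and the module structure, all of which are indeed routine. Your annihilator computation is also correct: $B^{\perp}$ is exactly the set of bounded functionals killing $\cl{O}(\G)\cl{B}^{+}$, and from this the \emph{easy} direction follows, since any $f\in B^{\perp}$ restricts to $f(1)\eps$ on $\cl{B}$, so a map with matrix entries in $\cl{B}$ intertwines $B^{\perp}$ (via the coideal property $\hat{\Delta}(B^{\perp})\subset\ell^{\infty}(\hat{\G})\bar{\tensor}B^{\perp}$ and the antipode identity).

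The genuine gap sits exactly at the step you flag as the technical heart, and the tool you propose there cannot close it. Fullness --- that every $B^{\perp}$-intertwiner $S\colon H_{\pi}\to H_{\rho}$ gives a map $U_{\rho}^{*}(S\tensor 1)U_{\pi}$ with entries in $\cl{B}$ --- is equivalent to the bipolar statement $\cl{B} = \{x\in\cl{O}(\G)\mid (f\tensor\id)\Delta(x) = f(1)x \text{ for all } f\in B^{\perp}\}$, and ``nondegeneracy of the pairing'' does not yield this, for two reasons. First, $B^{\perp}$ is by definition the annihilator taken inside $\ell^{\infty}(\hat{\G})$, i.e.\ only the \emph{bounded} part of the dual, whereas a linear-algebraic bipolar argument needs the full algebraic dual $\cl{O}(\G)^{\vee}$; one must know that bounded functionals already separate as much as arbitrary ones. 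Second, even with the full dual, recovering $\cl{B}$ (rather than something larger) from the annihilator of $\cl{O}(\G)\cl{B}^{+}$ is a Takeuchi-type theorem tied to faithful flatness of $\cl{O}(\G)$ over $\cl{B}$ (\cite{MR3863479}), not a formal consequence of nondegeneracy. The needed statement is precisely \cite[Lemma 2.6]{MR2276175} (cf.\ \cite[Theorem 3.9]{MR3128415}), which this paper itself invokes at the corresponding points in the proofs of Proposition \ref{prop:characterizations of finite index right coideals} and Proposition \ref{prop:imprimitivity}. Without it you only obtain a faithful, not full, comparison functor from $\G\text{-}\mod_B^{\fin}$ to $\rep^{\fin}B^{\perp}$; in particular your functor $F$, which goes in the opposite direction and is defined object-by-object on the $\pi|_{B^{\perp}}$, is not even well defined on morphisms. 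With that citation inserted at this point, the rest of your outline does go through.
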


This kind of comparison also holds for $\G$-equivariant correspondences.
A $\G$-equivariant correspondence over $C(\G)$ is a pair
of $\G$-equivariant Hilbert $C(\G)$-module $M$ and
a left $C(\G)$ action on $M$ such that $\alpha_M(xm) = \Delta(x)\alpha_M(m)$ holds for all $x \in C(\G)$ and $m \in M$.

The symbol $\G\text{-}\corr^{\rfin}_{C(\G)}$ denotes
the \cstar-tensor category of $\G$-equivariant
correspondences over $C(\G)$ which are
finitely generated as $\G$-equivariant Hilbert $C(\G)$-modules.
The symbol $\rep^{\fin}\cl{O}(\G)$ denotes the category of
finite dimensional \astar-representations of $\cl{O}(\G)$.
The following is a special case of
\cite[Theorem 5.2]{MR4776189}
\begin{prop} \label{prop:corr and rep}
There is
an equivalence $\rep^{\fin}\cl{O}(\G)\cong \G\text{-}\corr^{\rfin}_{C(\G)}$ of \cstar-tensor categories, which is given by the following functor:
\begin{align*}
 H \longmapsto H\tensor C(\G),\quad T\longmapsto T\tensor \id_{C(\G)}.
\end{align*}
where the left action of $C(\G)$ is the diagonal action and we consider
the natural right actions of $C(\G)$ and the natural right coaction of
$C(\G)$ on the RHS.
\end{prop}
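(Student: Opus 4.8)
The plan is to exhibit the displayed functor $F$ as the specialization of the general equivalence \cite[Theorem 5.2]{MR4776189} and to verify directly that it is a well-defined, fully faithful, monoidal and essentially surjective \cstar-tensor functor. First I would pin down the structure on $F(H) = H \tensor C(\G)$: the right $C(\G)$-module and the $C(\G)$-valued inner product are the free ones, the $\G$-equivariant structure is $\id_H \tensor \Delta$, and the diagonal left action is $\lambda(x) := (\pi \tensor \id)\Delta(x)$ for $x$ in the algebraic core, where $\pi$ is the given representation. The subtle point is that $\lambda$ must extend to a genuine left action of $C(\G)$ by adjointable operators. Here I would use that every matrix coefficient $u_{ij}$ of a unitary representation satisfies $\sum_k u_{ik}^*u_{ik} \le 1$, so that any \astar-representation of $\cl{O}(\G)$ is contractive on matrix coefficients; this supplies the norm bound needed to pass from $\cl{O}(\G)$ to $C(\G)$. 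A short coassociativity computation then yields the covariance $\alpha_{F(H)}(\lambda(x)m) = \Delta(x)\alpha_{F(H)}(m)$, and $H \tensor 1$ is a generating finite-dimensional invariant subspace, so $F(H) \in \G\text{-}\corr^{\rfin}_{C(\G)}$.

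For full faithfulness I would exploit that the coaction $\id_H \tensor \Delta$ has fixed-point space exactly $H \tensor 1 \cong H$. Any morphism of correspondences $\map{S}{H \tensor C(\G)}{H' \tensor C(\G)}$ is right $C(\G)$-linear and intertwines the coactions, hence carries $H \tensor 1$ into $H' \tensor 1$ and is determined on these generators, forcing $S = T \tensor \id$ for a unique linear $\map{T}{H}{H'}$. Left $C(\G)$-linearity of $S$ then unwinds precisely to $T\pi(x) = \pi'(x)T$ for all $x$, that is $T \in \Hom_{\rep^{\fin}\cl{O}(\G)}(H,H')$, and compatibility with adjoints is automatic since $(T \tensor \id)^* = T^* \tensor \id$.

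Essential surjectivity is where I would bring in the earlier results. Applying Proposition \ref{prop:dual picture of module category} to the trivial right coideal $B = C(\G)$, for which $B^{\perp} = \C$, shows that every finitely generated $\G$-equivariant Hilbert $C(\G)$-module is isomorphic to $H \tensor C(\G)$ for some finite-dimensional $H$; forgetting the left action, this applies to any object of $\G\text{-}\corr^{\rfin}_{C(\G)}$. It then remains to show that an arbitrary covariant left $C(\G)$-action $\lambda$ on $H \tensor C(\G)$ is the diagonal one attached to a \astar-representation of $\cl{O}(\G)$. I would recover this representation by $\pi(x) := (\id_H \tensor \eps)(\lambda(x))$ on the algebraic core and reconstruct $\lambda$ from $\pi$ out of the covariance relation together with the counit identity, exactly in the spirit of the fundamental theorem of Hopf modules. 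Finally, for monoidality I would use the canonical isomorphism $(H \tensor C(\G)) \tensor_{C(\G)} (K \tensor C(\G)) \cong (H \tensor K) \tensor C(\G)$, under which the two diagonal actions combine, via coassociativity, into the diagonal action of the tensor-product representation $(\pi \tensor \rho)\Delta$, while the unit object $\C$ with counit action is sent to $C(\G)$ with its standard bimodule structure.

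The main obstacle I anticipate lies at the two ends just described: on the one hand, establishing boundedness of the diagonal action for the reduced norm, so that $\lambda$ is truly a $C(\G)$-action and not merely an $\cl{O}(\G)$-action; on the other hand, the reconstruction step in essential surjectivity, where I must check that $\lambda$ preserves the algebraic cores (so that $(\id_H \tensor \eps)\lambda$ is defined), and that the resulting $\pi$ is multiplicative and \astar-preserving. Everything else is bookkeeping with the comodule and module structures.
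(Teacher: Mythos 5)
The paper offers no proof of this proposition at all: it is stated as a special case of \cite[Theorem 5.2]{MR4776189}, so your attempt at a direct verification goes beyond what the author does. Most of your architecture is sound: the identification of the fixed-point space of $\id_H\tensor\Delta$ with $H\tensor 1$ and the resulting full faithfulness, the use of Proposition \ref{prop:dual picture of module category} with $B=C(\G)$ (so $B^{\perp}=\C$) to see that every finitely generated equivariant Hilbert $C(\G)$-module is free, the Hopf-module-style reconstruction $\pi=(\id_H\tensor\eps)\circ\lambda$ (which is legitimate once one notes that covariance forces $\lambda(\cl{O}(\G))$ to preserve $H\tensor\cl{O}(\G)$), and the monoidal bookkeeping. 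The problem is the step you yourself flag as the main obstacle: your argument for it is not just incomplete but gives the wrong reason.

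Contractivity of a \astar-representation of $\cl{O}(\G)$ on matrix coefficients shows only that the representation extends to the \emph{universal} completion $C_u(\G)$; it gives no control whatsoever of the reduced norm, and $C(\G)$ in this paper is the reduced algebra (standing convention of Subsection \ref{subsec:cqg}). The counit already defeats your reasoning: it is a one-dimensional \astar-representation of $\cl{O}(\G)$, contractive on every matrix coefficient, yet it extends to $C(\G)$ only when $\G$ is coamenable \textendash{} and the central examples of this paper (free unitary quantum groups, free products of duals of compact Lie groups) are not coamenable. What makes the \emph{diagonal} action special is a quantum Fell absorption principle rather than a generator bound. Concretely, write $L^2(\G)=\bigoplus_{\rho}H_{\rho}\tensor\bar{H_{\rho}}$ and let $V$ be the unitary implementing the coproduct, $\Delta(x)=V(x\tensor 1)V^*$ on $L^2(\G)\tensor L^2(\G)$; under the Peter\textendash Weyl decomposition of the first leg, $V$ is assembled blockwise from the corepresentation unitaries $U^{\rho}\in B(H_{\rho})\tensor\cl{O}(\G)$, whose entries are matrix coefficients. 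Hence the conjugation identity can be read with one leg kept in $\cl{O}(\G)$ (faithfulness of the GNS representation $\pi_h$ on $\cl{O}(\G)$, i.e.\ reducedness, is what permits this), and one may then apply the given finite-dimensional representation $\pi$ to that algebraic leg. This exhibits the diagonal representation of $\cl{O}(\G)$ as a unitary conjugate of an amplification of $\pi_h$, so that $\nor{(\pi\tensor\pi_h)\Delta(x)}=\nor{x}_{C(\G)}$ (the paper's leg convention is recovered by running the argument for the opposite coproduct, which has the same \astar-algebra, Haar state and reduced norm). Without this absorption argument \textendash{} or without outsourcing well-definedness to the actual proof of \cite[Theorem 5.2]{MR4776189} \textendash{} your functor is not known to land in $\G\text{-}\corr^{\rfin}_{C(\G)}$, and since essential surjectivity must produce objects of the form $F(H,\pi)$, the gap propagates there as well.
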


\subsection{Index of conditional expectation}
\label{subsec:index}

In this subsection, we collect definitions and facts about indices of
conditional expectations, introduced in \cite{MR996807}.

Let $A$ be a unital $\C$-algebra and $B$ be a unital $\C$-subalgebra of $A$.
A conditional expectation from $A$ to $B$ is a projection
$\map{E}{A}{B}$ as $B$-bimodules.
We say that $E$ is
\emph{of finite index} if it admits a \emph{quasi-basis}, a pair of
two finite families
$(u_i)_{i = 1}^n, (v_i)_{i = 1}^n \subset A$ satisfying
\[
 a = \sum_{i = 1}^n u_iE(v_ia) = \sum_{i = 1}^n E(au_i)v_i
\]
for all $a\in A$. In this case we can define the \emph{index}
of $E$ by the formula $\Index{E} = \sum_{i = 1}^n u_iv_i$.
This is independent of the choice of a quasi-basis, and $\Index{E}$ is an element of $Z(A)^{\times}$.

When $A$ and $B$ are \cstar-algebras, we only consider
conditional expectations in the usual sense. In this case
it is known that we can take a quasi-basis so that $v_i = u_i^*$.
We also define the \emph{scalar index} of $E$ as follows:
\[
 \Index^s E = \min\{c > 0\mid cE - \id_A\text{ is completely positive.}\}. 
\]
We have $\Index^s E = \nor{\Index E}$. (c.f. \cite[Th\'{e}or\`{e}me 3.5]{MR945550})

Consider a right coideal $B$ of $C(\G)$, where $\G$
is a compact quantum group. A conditional expectation $\map{E}{C(\G)}{B}$
is said to be $\G$-equivariant if
$(E\tensor \id)\circ \Delta = \Delta\circ E$. There is at most
one $\G$-equivariant conditional expectation since such an expectation
automatically preserves the Haar state.
Also note that $C(\hat{\Gamma'}) \subset C(\hat{\Gamma})$
always admits a $\G$-equivariant conditional expectation for any
inclusion $\Gamma' \subset \Gamma$ of discrete quantum groups.

We can interpret $\Index E$ as the categorical dimension of
a suitable object in $\G\text{-}\corr^{\rfin}_{C(\G)}$.
Here we need the notion of \cstar-Frobenius algebra. See
\cite[Section 2]{MR3933035} and \cite{MR3308880} for this topic.

If $\map{E}{C(\G)}{B}$
is a $\G$-equivariant conditional expectation of finite index,
we can form a \cstar-Frobenius algebra $(A,m,u)$ in $\G\text{-}\corr_{C(\G)}$:
\begin{itemize}
 \item $A = C(\G)_E\tensor_B C(\G)$,
 \item $\map{m}{A\tensor A}{A},\quad (x\tensor y)\tensor (z\tensor w) \longrightarrow x\tensor E(yz)w$,
 \item $\map{u}{C(\G)}{A},\quad x\longrightarrow \sum_{i = 1}^n xu_i\tensor u_i^*$,
\end{itemize}
where $C(\G)_E$ is a $\G$-equivariant Hilbert $B$-module based on
$C(\G)$ equipped with $\ip{y,x}_B = E(y^*x)$, and $(u_i)_{i = 1}^n$
is a quasi-basis of $E$.

Note that $\Index E = u^*mm^*u(1)\in C(\G)^\G = \C1_{C(\G)}$.
Hence we obtain $\Index E = (\Index^s E)1_{C(\G)}$.
We can also see that any morphism from $A$ to $C(\G)$ is
a scalar multiple of $u^*$, which is given by $x\tensor y \longrightarrow xy$, since the $\G$-equivariance implies that the image of
$1\tensor 1 \in A$ has to be a scalar and the compatibility with the
left and right $C(\G)$-actions implies that such a morphism is determined
by the image of $1\tensor 1$.
Hence this \cstar-Frobenius algebra is irreducible, and
\cite[Theorem 2.9]{MR3933035} implies that it is isomorphic to a standard Q-system.
Therefore the categorical dimension of $A$ can be calculated as
$\nor{u^*mm^*u} = \Index^s E$.



\section{Index of quantum subgroup in discrete quantum group}
In this section, we prove that several finiteness conditions on
right coideals are equivalent. We also
prove some formulae which are well-known
in the group case.

\subsection{Finite index right coideals}

Let $\G$ be a compact quantum group.

\begin{defn}
Let $B$ be a right coideal of $C(\G)$.
We say that $B$ is \emph{of finite index} when $B$ admits a
$\G$-equivariant conditional expectation $E$ of finite index.
In this case we define the index of $B$ as $[C(\G): B] := \Index^s E$.
In the other case, we define $[C(\G): B] = \infty$.
\end{defn}

The following definition is inspired by \cite[Subsection 8.2]{MR2183279}.
\begin{defn}
 Let $\cl{C}$ be a \cstar-tensor category
and $\cl{M}$ be a connected semisimple left $\rep^{\fin}\G$-module
\cstar-category with only finitely many irreducible objects.
\begin{enumerate}
 \item A \emph{dimension function} on $\cl{M}$
is a map $\map{d}{\mathrm{Obj}(\cl{M})}{[0,\infty)}$ satisfying
$d(X\oplus Y) = d(X) + d(Y)$ for all $X, Y \in \cl{M}$.
We say that $d$ is \emph{normalized} if $\min_{X\neq 0} d(X) = 1$.
 \item A dimension function $d$ is said to be $\cl{C}$-equivariant
if there is $d_U \in [0,\infty)$ for any $U \in \cl{C}$
such that $d(U\tensor X) = d_Ud(X)$ holds for all $X$.
 \item We define the
\emph{normalized Frobenius-Perron dimension} of $\cl{M}$
as
\begin{align*}
\FPdim \cl{M} := \sum_{X \in \irr\cl{M}} d(X)^2
\end{align*}
if $\cl{M}$ admits a normalized $\cl{C}$-equivariant dimension function $d$.
\end{enumerate}
\end{defn}
\begin{rema}
As an application of theory of Frobenius-Perron eigenvalue, we can see
that there is at most one normalized $\cl{C}$-equivariant
dimension function on $\cl{M}$ since $\cl{M}$ is connected.
In particular $\FPdim \cl{M}$ is well-defined.
\end{rema}
\begin{exam} \label{exam:FPdim for right coideals}
Let $B$ be a right coideal of $C(\G)$ such that
$\G\text{-}\mod^{\fin}_B$ has only finitely many irreducible objects.
Since $\G\text{-}\mod^{\fin}_B \cong \rep^{\fin} B^{\perp}$
by Proposition \ref{prop:dual picture of module category},
the forgetful functor on $\rep^{\fin} B^{\perp}$
induces a normalized $\rep^{\fin}\G$-equivariant dimension function of $\G\text{-}\mod^{\fin}_B$.
Hence we can define its Frobenius-Perron dimension,
and actually we have
\begin{align*}
 \FPdim \G\text{-}\mod^{\fin}_B = \dim_{\C} B^{\perp}.
\end{align*}
\end{exam}

In the following it is shown that
several natural finiteness conditions on right coideals are equivalent.

\begin{prop} \label{prop:characterizations of finite index right coideals}
For a right coideal of $B$ of $C(\G)$,
the following conditions are equivalent:
\begin{enumerate}
 \item A right $\cl{B}$-module $\cl{O}(\G)$ is finitely generated.
 \item There is a conditional expectation $\map{\cl{E}}{\cl{O}(\G)}{\cl{B}}$ which is of finite index and preserves the right coactions of $\cl{O}(\G)$.
 \item The right coideal $B$ is of finite index.
 \item $\dim_{\C} B^{\perp} < \infty$.
 \item The category $\G\text{-}\mod^{\fin}_B$
has only finitely many irreducible objects.
\end{enumerate}
In this case, we have $[C(\G):B] = \dim_{\C}B^{\perp} = \FPdim \G\text{-}\mod_B$.
\end{prop}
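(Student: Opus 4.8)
My plan is to split the five conditions into an \emph{analytic} group, conditions (ii) and (iii), which assert the existence of a finite index expectation, and a \emph{finiteness} group, conditions (i), (iv) and (v); I would establish the equivalences inside each group by soft arguments and then connect the two groups. One bridge is easy---an expectation immediately yields all the finiteness---while the other, that finiteness produces an expectation, is the real content and I would defer it to Section~4. Concretely, the cycle I aim for is $(\text{iii})\Rightarrow(\text{ii})\Rightarrow(\text{i})\Rightarrow(\text{iv})\Leftrightarrow(\text{v})$ together with the hard implication $(\text{iv})\Rightarrow(\text{iii})$.

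For the analytic group, $(\text{iii})\Rightarrow(\text{ii})$ is a restriction: a $\G$-equivariant conditional expectation $\map{E}{C(\G)}{B}$ preserves the Haar state, hence respects the isotypic decomposition, so $\cl{E} := E|_{\cl{O}(\G)}$ maps into $\cl{B} = B\cap\cl{O}(\G)$ and is right-coaction-preserving, and a finite quasi-basis can be taken inside $\cl{O}(\G)$ because only finitely many isotypic components of any quasi-basis are needed on a fixed element of $\cl{O}(\G)$. Conversely $(\text{ii})\Rightarrow(\text{iii})$: invariance of the Haar state gives $h\circ\cl{E} = h$ for any coaction-preserving $\cl{E}$, so $\cl{E}$ is contractive for the reduced C*-norm and extends to a $\G$-equivariant conditional expectation $E$ on $C(\G)$ with the same quasi-basis. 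Finally $(\text{ii})\Rightarrow(\text{i})$ is immediate, since $a = \sum_i u_i\cl{E}(v_ia)$ exhibits $\cl{O}(\G) = \sum_i u_i\cl{B}$.

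For the finiteness group, $(\text{iv})\Leftrightarrow(\text{v})$ is read off from the dictionary already available: Proposition~\ref{prop:dual picture of module category} identifies $\G\text{-}\mod^{\fin}_B$ with $\rep^{\fin}B^{\perp}$, whose irreducible objects are the finite-dimensional factor summands of $B^{\perp}$, and Example~\ref{exam:FPdim for right coideals} shows that having finitely many of them forces $B^{\perp}$ to be finite dimensional with $\FPdim\G\text{-}\mod^{\fin}_B = \dim_{\C}B^{\perp}$; the converse is clear. To feed (i) into this group I would prove $(\text{i})\Rightarrow(\text{iv})$: a finite generating set of $\cl{O}(\G)$ over $\cl{B}$ makes the fibre $\cl{O}(\G)\tensor_{\cl{B}}\C_{\eps}$ finite dimensional, and the evaluation pairing between $B^{\perp}\subset\ell^{\infty}(\hat{\G})$ and $\cl{O}(\G)$ identifies $B^{\perp}$ with the dual of this fibre, whence $\dim_{\C}B^{\perp} < \infty$.

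The decisive and hardest step is the remaining bridge $(\text{iv})\Rightarrow(\text{iii})$, and I expect it to be the main obstacle: when $\G$ is not of Kac type the Haar state is not tracial, so there is no a priori reason for $B$ to be the range of any $\G$-equivariant conditional expectation, and the soft arguments above only transport an expectation once one exists. The plan, carried out in Section~4, is to pass to the maximal Kac quantum subgroup $\map{q}{C(\G)}{C(\G_{\Kac})}$ and use the imprimitivity theorem (Proposition~\ref{prop:imprimitivity}) to write $\cl{B} = q^{-1}(\cl{B}_{\Kac})$ for a finite index coideal $B_{\Kac}$ of $C(\G_{\Kac})$; there the Haar state is a trace, so finite dimensionality of $B_{\Kac}^{\perp}$ yields the unique trace-preserving finite index conditional expectation via the index theory of \cite{MR996807}, and Vaes's implementation theorem \cite{MR1814995} together with the special property of $\G_{\Kac}$ from \cite{MR2210362} let us transport it to a $\G$-equivariant finite index expectation onto $B$. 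For the index identity, $\dim_{\C}B^{\perp} = \FPdim\G\text{-}\mod_B$ is Example~\ref{exam:FPdim for right coideals}, while $\Index^s E = [C(\G):B]$ equals, by Subsection~\ref{subsec:index}, the categorical dimension of the C*-Frobenius algebra $A = C(\G)_E\tensor_B C(\G)$; under the equivalence $\rep^{\fin}\cl{O}(\G)\cong\G\text{-}\corr^{\rfin}_{C(\G)}$ of Proposition~\ref{prop:corr and rep} this dimension is the vector-space dimension of the fibre of $A$, which I would identify with $B^{\perp}$, giving $[C(\G):B] = \dim_{\C}B^{\perp}$.
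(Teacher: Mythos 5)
Your overall architecture coincides with the paper's: the same cycle $(\text{iii})\Rightarrow(\text{ii})\Rightarrow(\text{i})\Rightarrow(\text{iv})\Leftrightarrow(\text{v})$ plus the hard bridge $(\text{iv})\Rightarrow(\text{iii})$ deferred to the Kac-quotient machinery of Section~4, and your arguments for $(\text{ii})\Rightarrow(\text{i})$, $(\text{i})\Rightarrow(\text{iv})$ (the fibre $\cl{O}(\G)\tensor_{\cl{B}}\C_{\eps}$ and the pairing with $B^{\perp}$) and $(\text{iv})\Leftrightarrow(\text{v})$ are sound. But both bridges touching condition (iii) have genuine gaps. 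For $(\text{iii})\Rightarrow(\text{ii})$, your claim that a finite quasi-basis can be taken inside $\cl{O}(\G)$ ``because only finitely many isotypic components of any quasi-basis are needed on a fixed element'' is not a proof: a quasi-basis must satisfy $a = \sum_i u_i\cl{E}(u_i^*a)$ for \emph{all} $a$ simultaneously, and the isotypic components needed vary with $a$; making the truncation uniform is exactly the content of this implication. The paper handles it by first proving that $C(\G)_E$ is \emph{finitely generated} as a $\G$-equivariant Hilbert $B$-module \tend\ the Pimsner\tend Popa-type inequality $x^*x \le rE(x^*x)$ makes the multiplication $C(\G)_E\tensor_B C(\G)\to C(\G)$ bounded, the representing vectors $\eta_{\lbd}$ on the irreducible summands have square-summable norms, and the partial-sum operators $T_F$ converge in norm to the identity, forcing the irreducible decomposition to be finite \tend\ and only then extracting a quasi-basis $\xi_i = p(e_i\tensor 1)\in \cl{O}(\G)$ from an embedding $C(\G)_E\hookrightarrow H_{\pi}\tensor B$. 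Nothing in your sketch replaces this argument.

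The second and more serious gap is in $(\text{iv})\Rightarrow(\text{iii})$. Invoking Proposition~\ref{prop:imprimitivity} here is circular as stated, since that proposition is about coideals already known to be of finite index (its proof uses only $\dim_{\C}B^{\perp}<\infty$, so this could be repaired, but that proof itself rests on Lemma~\ref{lemm:S2-invariance}, i.e.\ on the very machinery you are postponing). Worse, the final step \tend\ ``transport'' a finite-index expectation $C(\G_{\Kac})\to B_{\Kac}$ back to a $\G$-equivariant expectation $C(\G)\to B$ \tend\ has no mechanism behind it: conditional expectations do not pull back along the surjection $\map{q}{\cl{O}(\G)}{\cl{O}(\G_{\Kac})}$, and neither Vaes's implementation theorem nor So{\l}tan's result provides such a lift. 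What the paper actually does is use those two inputs to prove Lemma~\ref{lemm:S2-invariance} (the action of $\cl{O}(\G)$ on any finite-dimensional right $\cl{O}(\G)$-module \cstar-algebra is $S^2$-invariant), apply it to $B^{\perp}$ to deduce via Tomatsu's characterization that $\cl{B}$ is $S^2$-invariant, and then invoke Chirvasitu's theorem to \emph{produce} a $\G$-equivariant conditional expectation onto $B$; finiteness of its index then follows because the isotypic components of $C(\G)_E$ are finite dimensional, so (v) caps the irreducible decomposition, after which the argument from $(\text{iii})\Rightarrow(\text{ii})$ applies. Without Chirvasitu's existence theorem (or a substitute for it), your bridge from (iv) to (iii) does not close. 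Your redundant direct $(\text{ii})\Rightarrow(\text{iii})$ suffers from a related problem: an algebraic conditional expectation is only a $\cl{B}$-bimodule projection, with no positivity assumed, so Haar-state preservation alone does not give contractivity or a completely positive extension to $C(\G)$; the paper deliberately avoids this by routing $(\text{ii})\Rightarrow(\text{i})\Rightarrow(\text{iv})\Rightarrow(\text{iii})$.
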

\begin{proof}
The equivalence of (iv) and (v) follows from Proposition \ref{prop:dual picture of module category}.
See Example \ref{exam:FPdim for right coideals}
for the equality $\dim_{\C} B^{\perp} = \FPdim \G\text{-}\mod^{\fin}_B$.

The implication (ii) $\Longrightarrow$ (i) follows by definition.

Next we show that (i) implies (iv). Take $(x_i)_{i = 1}^n \subset \cl{O}(\G)$ which generates $\cl{O}(\G)$ as a right $\cl{B}$-module. By taking adjoint, we see that $(x_i^*)_{i = 1}^n$ generates $\cl{O}(\G)$ as
a left $\cl{B}$-module. Consider the following map:
\begin{align*}
 \map{\Phi}{B^{\perp}}{\C^n},\quad f \longmapsto (f(x_i^*))_{i = 1}^n
\end{align*}
It suffices to show that this map is injective. Assume $\Phi(f) = 0$.
Then, for any $b \in B$ and $x_i$, we have
\begin{align*}
 f(bx_i^*) = (\ev_b\bar{\tensor} \ev_{x_i^*})\Delta(f) = \eps(b)\ev_{x_i^*}(f) = \eps(b)f(x_i^*) = 0.
\end{align*}
Since $(x_i^*)_{i = 1}^n$ generates $\cl{O}(\G)$, this means $f = 0$.

Next we show (iii) $\Longrightarrow$ (ii). Set $r := \Index^s E \in (0,\infty)$. Then we have $x^*x \le rE(x^*x) \le rx^*x$ for any $x \in C(\G)$,
which shows that the natural map $C(\G) \longrightarrow C(\G)_E$
is a topological isomorphism.

Take an irreducible decomposition $C(\G)_E \cong \bar{\bigoplus}_{\lbd \in \Lambda} M_{\lbd}$ as a $\G$-equivariant Hilbert $B$-module.
Then it gives a decomposition $C(\G)_E\tensor_B C(\G)\cong \bar{\bigoplus_{\lbd \in \Lambda}} M_{\lbd}\tensor_B C(\G)$. Since
each $M_{\lbd}$ is finitely generated, $M_{\lbd}\tensor_B C(\G)$
is also finitely generated.

By our assumption, $m:x\tensor y \in C(\G)_E\tensor_B C(\G)\longmapsto xy \in C(\G)$ is bounded. Hence there uniquely exists $\eta_{\lbd} \in M_{\lbd}\tensor_B C(\G)$ such that $\ip{\eta_{\lbd},\xi}_{C(\G)} = m(\xi)$ for $\xi \in M_{\lbd} \tensor_B C(\G)$.
Moreover we can present $\eta_{\lbd}$ as a finite sum $\sum_{i} u_{\lbd,i}\tensor v_{\lbd,i}$ since $M_{\lbd}$ is finitely generated projective.

For this family $\{\eta_{\lbd}\}_{\lbd \in \Lambda}$, we have
\begin{align*}
 \sum_{\lbd \in F}\nor{\eta_{\lbd}}^2 = \nor{\sum_{\lbd \in F}\eta_{\lbd}}^2 = \nor{m|_{\bigoplus_{\lbd \in F}M_{\lbd}\tensor_B C(\G)}}^2
\end{align*}
for any finite subset $F \subset \Lambda$. Hence we also have
\begin{align*}
 \sum_{\lbd \in \Lambda}\nor{\eta_{\lbd}}^2 = \nor{m}^2 < \infty.
\end{align*}

For a finite subset $F \subset \Lambda$, we define $\map{T_F}{C(\G)_E}{C(\G)_E}$ as follows:
\begin{align*}
 T_F(x) = \sum_{\lbd \in F}\sum_i v_{\lbd,i}^*\ip{u_{\lbd,i},x}_B.
\end{align*}
Then we have
\begin{align*}
 \nor{T_F(x)}_B \le \nor{T_F(x)} = \nor{\ip{\sum_{\lbd \in F}\eta_{\lbd},x\tensor 1}_{C(\G)}} \le \nor{\sum_{\lbd \in F}\eta_{\lbd}}\nor{x}_B.
\end{align*}
Hence $\{T_F\}_F$ converges to an operator $T$ in the operator norm.
On the other hand, $T_F = \id$ on $\bigoplus_{\lbd \in F}M_{\lbd}$.
Hence we have $T = \id_{C(\G)_E}$, which means $T_F$ is invertible
for some $F$. Since the image of $T_F$ is contained in $\bigoplus_{\lbd \in F} M_{\lbd}$, we can see that $\Lambda = F$ and
$C(G)_E$ is finitely generated as a $\G$-equivariant Hilbert $B$-module.

Now we have an embedding of $C(\G)_E$
into $H_{\pi}\tensor B$ for some $\pi \in \rep^{\fin}\G$.
Let $i$ be the embedding and $p = i^*$ be its adjoint. Then this induces
maps between the algebraic cores $\cl{O}(\G)$ and $\cl{B}$.
Fix an orthonormal basis $(e_i)_{i = 1}^n$ of $H_{\pi}$
and set $\xi_i := p(e_i\tensor 1) \in \cl{O}(\G)$. Then we have
\begin{align*}
 \xi = \pi(\xi) &= p\left(\sum_{i = 1}^n (e_i\tensor 1)\ip{e_i\tensor 1,i(\xi)}_B\right)\\
&= \sum_{i = 1}^n p(e_i\tensor 1)\ip{p(e_i\tensor 1),\xi}_B = \sum_{i = 1}^n \xi_iE(\xi_i^*\xi)
\end{align*}
for all $\xi \in \cl{O}(\G)$. Hence $(\xi_i)_{i = 1}^n$
 is a quasi-basis for $E|_{\cl{O}(\G)}$.

At last we show (iv) $\Longrightarrow$ (iii). By Lemma \ref{lemm:S2-invariance}, the right action of $\cl{O}(\G)$ on $B^{\perp}$ is $S^2$-invariant
i.e. we have $f\triangleleft x = f\triangleleft S^2(x)$.
In particular $f(x) = (f\triangleleft x)(1) = (f\triangleleft S^2(x))(1) = f(S^2(x))$ for $f \in B^{\perp}$ and $x \in \cl{O}(\G)$.

On the other hand, \cite[Lemma 2.6]{MR2276175} (c.f. \cite[Theorem 3.9]{MR3128415}) says
\begin{align*}
 \cl{B} = \{x \in \cl{O}(\G)\mid (f\tensor \id)\Delta(x) = f(1)x \text{ for all }f \in B^{\perp}\}.
\end{align*}
Now we can see the $S^2$-invariance of $\cl{B}$ since
$(f\tensor \id)\Delta(S^2(x)) = S^2((f\circ S^2)\tensor \id)\Delta(x)$.

 Then \cite[Theorem 4.2]{MR3863479} implies there is
a $\G$-equivariant conditional expectation $\map{E}{C(\G)}{B}$.
Now consider an irreducible decomposition of $C(\G)_E$
as $\G$-equivariant Hilbert $B$-module.
Note that $\pi$-isotypical component
of $C(\G)_E$ is finite dimensional for any $\pi \in \irr\G$.
This implies any irreducible $\G$-equivariant Hilbert $B$-module
appears in an irreducible decomposition of $C(\G)_E$ with finite multiplicity. Hence (v), which is equivalent to (iv), implies that $C(\G)_E$ has
an irreducible decomposition into finitely many irreducible submodules.
Then the discussion in the proof of (iii) $\Longrightarrow$ (ii) shows
that $B$ is of finite index.

For the comparison of $[C(\G):B]$ and $\dim_{\C}B^{\perp}$,
see Proposition \ref{prop:comparison of dimension and index}.
\end{proof}
\begin{rema}
If $\G$ is of Kac type, we can take an average of a possibly non-equivariant conditional expectation to obtain an equivariant conditional expectation. Moreover this procedure transforms conditional expectations of finite index into equivariant conditional expectations of finite index.
This means the conditions in Proposition \ref{prop:characterizations of finite index right coideals} are also equivalent
to the existence of conditional expectaion onto $B$ of finite index under
the assumption that $\G$ is of Kac type.
\end{rema}

\subsection{Index of quantum subgroup and some formulae}

Now we define the index of quantum subgroup.
In the remaining of this paper, $\Gamma$ denotes a discrete quantum group.
\begin{defn}
Let $\Gamma'$ be a quantum subgroup of $\Gamma$. We say that
 $\Gamma'$ is of finite index if $C(\hat{\Gamma'})$ is of finite index as a right coideal of $C(\hat{\Gamma})$. In this case we define the index of $\Gamma'$ in $\Gamma$ as $[C(\hat{\Gamma}): C(\hat{\Gamma'})]$,
which is denoted by $[\Gamma:\Gamma']$. We also set $[\Gamma:\Gamma'] = \infty$ when $\Gamma'$ is not of finite index.
\end{defn}

As expected from Proposition \ref{prop:hypergroup description of quantum subgroups}, we can understand the finiteness as a condition on
the associated hypergroups.

\begin{prop} \label{prop:subhypergroup characterization}
 Let $\Gamma'$ be a quantum subgroup of $\Gamma$. Then
the following conditions are equivalent:
\begin{enumerate}
 \item $[\Gamma:\Gamma'] < \infty$.
 \item $\abs{\Gamma_{\hyp}/\Gamma'_{\hyp}} < \infty$.
\end{enumerate}
Moreover we have $\abs{\Gamma_{\hyp}/\Gamma'_{\hyp}} \le [\Gamma:\Gamma']$.
\end{prop}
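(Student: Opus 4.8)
The plan is to reduce the whole statement to the dimension formula of Proposition~\ref{prop:characterizations of finite index right coideals} and then read off the cardinality of the coset space from the central decomposition supplied by Lemma~\ref{lemm:comparison of quantum functions and functions}. First I would apply Proposition~\ref{prop:characterizations of finite index right coideals} to the compact quantum group $\G = \hat{\Gamma}$ and the right coideal $B = C(\hat{\Gamma'})$. By definition $[\Gamma:\Gamma'] < \infty$ means precisely that $B$ is a finite index right coideal, which is condition (iii) there; hence (i) is equivalent to condition (iv), namely $\dim_{\C} B^{\perp} < \infty$, and in the finite case $[\Gamma:\Gamma'] = \dim_{\C} B^{\perp}$.

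The key identification is that $B^{\perp}$ is nothing but $\ell^{\infty}(\Gamma/\Gamma')$. Indeed, here $\cl{B} = C(\hat{\Gamma'}) \cap \cl{O}(\hat{\Gamma}) = \cl{O}(\hat{\Gamma'})$, and unwinding the definition of $B^{\perp} \subset \ell^{\infty}(\hat{\G}) = \ell^{\infty}(\Gamma)$ gives exactly the defining condition $(\id \tensor \ev_x)\Delta(X) = \eps(x)X$ for all $x \in \cl{O}(\hat{\Gamma'})$ of $\ell^{\infty}(\Gamma/\Gamma')$ recorded in Subsection~\ref{subsec:dqg}. So the whole statement becomes a comparison between $\dim_{\C} \ell^{\infty}(\Gamma/\Gamma')$ and $\abs{\Gamma_{\hyp}/\Gamma'_{\hyp}}$.

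Next I would invoke Lemma~\ref{lemm:comparison of quantum functions and functions}. It places $\ell^{\infty}(\Gamma_{\hyp}/\Gamma'_{\hyp})$ inside $Z(\ell^{\infty}(\Gamma/\Gamma'))$; concretely, the characteristic functions of the cosets give mutually orthogonal central projections $\delta_c$, $c \in \Gamma_{\hyp}/\Gamma'_{\hyp}$, summing to $1$ (the constant function maps to $1_{\ell^{\infty}(\Gamma)}$). This yields the central decomposition $\ell^{\infty}(\Gamma/\Gamma') = \bar{\bigoplus}_{c} \delta_c \ell^{\infty}(\Gamma/\Gamma')$, and the second assertion of the Lemma says each summand is finite-dimensional; it is moreover nonzero since it contains $\delta_c$. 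Taking dimensions gives $\dim_{\C} \ell^{\infty}(\Gamma/\Gamma') = \sum_{c} \dim_{\C} \delta_c \ell^{\infty}(\Gamma/\Gamma')$, a sum of positive integers indexed by $\Gamma_{\hyp}/\Gamma'_{\hyp}$. Hence this dimension is finite if and only if the index set is finite, which proves (i) $\Leftrightarrow$ (ii); and since each summand is $\ge 1$ we obtain $\abs{\Gamma_{\hyp}/\Gamma'_{\hyp}} \le \dim_{\C} \ell^{\infty}(\Gamma/\Gamma') = [\Gamma:\Gamma']$.

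The steps are essentially bookkeeping once the two cited results are in hand; the only point needing care is the first identification $B^{\perp} = \ell^{\infty}(\Gamma/\Gamma')$ under the dualization $\G = \hat{\Gamma}$, together with checking that the coset projections $\delta_c$ exhaust the unit of $\ell^{\infty}(\Gamma/\Gamma')$ so that the direct sum is complete and no dimension is lost. I do not anticipate a genuine obstacle, since the substantive finiteness input, namely the finite-dimensionality of each coset summand, is exactly what Lemma~\ref{lemm:comparison of quantum functions and functions} provides.
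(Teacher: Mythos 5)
Your proposal is correct and follows exactly the paper's own route: the paper's (two-sentence) proof cites precisely the equivalence (iii) $\iff$ (iv) of Proposition \ref{prop:characterizations of finite index right coideals} together with Lemma \ref{lemm:comparison of quantum functions and functions}, which is what you use. Your write-up merely makes explicit the identification $B^{\perp} = \ell^{\infty}(\Gamma/\Gamma')$ and the coset-indexed central decomposition that the paper leaves implicit.
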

\begin{proof}
The equivalence follows from Lemma \ref{lemm:comparison of quantum functions and functions} and (iii) $\iff$ (iv) in Proposition \ref{prop:characterizations of finite index right coideals}.
The last statement also follows from the same lemma.
\end{proof}
\begin{rema}
In general $[\Gamma:\Gamma'] \neq \abs{\Gamma_{\hyp}/\Gamma'_{\hyp}}$.
For a counterexample, consider $\Gamma = \hat{S_3}$,
$\Gamma' = \hat{\Z/2\Z} = \Z/2\Z$ and an inclusion
corresponding to the surjective homomorphism
$S_3\longrightarrow \Z/2\Z$.  In this case $[\Gamma:\Gamma'] = 3\neq 2 = \abs{\Gamma_{\hyp}/\Gamma'_{\hyp}}$.
\end{rema}

\begin{exam}
Let $\mathbf{1}$ be the trivial quantum subgroup of $\Gamma$,
whose associated right coideal is $\C$.
Then we have $\C^{\perp} = \ell^{\infty}(\Gamma)$,
which means $[\Gamma:\mathbf{1}] = \dim_{\C}\ell^{\infty}(\Gamma)$.
Hence this is finite if and only if $\Gamma$ is a finite quantum group.
Moreover we also have $[\Gamma:1] = \abs{\Gamma}$ in this case.
\end{exam}

Now we provide some formulae on the index.

\begin{prop}
Let $\Gamma'$ be a quantum subgroup of $\Gamma$ and $\Gamma''$ be
a quantum subgroup of $\Gamma'$. Then we have $[\Gamma:\Gamma''] = [\Gamma:\Gamma'][\Gamma':\Gamma'']$.
\end{prop}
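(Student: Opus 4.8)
The plan is to route all three indices through the canonical equivariant conditional expectations and then apply the standard composition rule for Watatani index. Write $C'' := C(\hat{\Gamma''}) \subset C' := C(\hat{\Gamma'}) \subset C := C(\hat{\Gamma})$, and let $\map{E}{C}{C''}$, $\map{E'}{C}{C'}$ and $\map{E''}{C'}{C''}$ be the unique $\hat{\Gamma}$-, $\hat{\Gamma}$- and $\hat{\Gamma'}$-equivariant conditional expectations; these all exist for inclusions of discrete quantum groups, as recalled in Subsection \ref{subsec:index}. By definition $[\Gamma:\Gamma'] = \Index^s E'$, $[\Gamma':\Gamma''] = \Index^s E''$ and $[\Gamma:\Gamma''] = \Index^s E$ whenever the relevant coideals are of finite index.

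The crucial first step is the factorization $E = E'' \circ E'$. Since $\hat{\Gamma'}$ is a quotient quantum group of $\hat{\Gamma}$, its coproduct $\Delta'$ is the restriction of the coproduct $\Delta$ of $\hat{\Gamma}$, and $\Delta(C') \subset C' \tensor C'$. For $x \in C$ we then have $E'(x) \in C'$, so $\Delta(E'(x)) = \Delta'(E'(x)) \in C'\tensor C'$, and using the $\hat{\Gamma}$-equivariance of $E'$ followed by the $\hat{\Gamma'}$-equivariance of $E''$ one checks directly that $(E''E'\tensor\id)\Delta(x) = (E''\tensor\id)\Delta'(E'(x)) = \Delta'(E''E'(x)) = \Delta(E''E'(x))$. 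Hence $E''E'$ is an $\hat{\Gamma}$-equivariant conditional expectation onto $C''$, and by the uniqueness of equivariant conditional expectations it equals $E$.

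The second step settles the cases where some index is infinite, by reducing to condition (i) of Proposition \ref{prop:characterizations of finite index right coideals}, i.e. finite generation of the algebraic cores $\cl{O}(\hat{\Gamma''}) \subset \cl{O}(\hat{\Gamma'}) \subset \cl{O}(\hat{\Gamma})$. If $[\Gamma:\Gamma''] < \infty$ then $\cl{O}(\hat{\Gamma})$ is finitely generated as a right $\cl{O}(\hat{\Gamma''})$-module; the same generators generate it over the larger ring $\cl{O}(\hat{\Gamma'})$, giving $[\Gamma:\Gamma'] < \infty$, while the restriction of $E'$ to algebraic cores is a right $\cl{O}(\hat{\Gamma''})$-linear idempotent onto $\cl{O}(\hat{\Gamma'})$, exhibiting $\cl{O}(\hat{\Gamma'})$ as a direct summand of a finitely generated module and hence finitely generated, so $[\Gamma':\Gamma''] < \infty$. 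Conversely, the products of two generating families show that finiteness of both $[\Gamma:\Gamma']$ and $[\Gamma':\Gamma'']$ forces $\cl{O}(\hat{\Gamma})$ to be finitely generated over $\cl{O}(\hat{\Gamma''})$. Thus $[\Gamma:\Gamma''] < \infty$ exactly when both factors are finite, so the asserted identity holds (as $\infty$) in all degenerate cases.

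In the remaining case all three expectations are of finite index. Choosing quasi-bases $(u_i)_i$ for $E'$ and $(w_j)_j$ for $E''$, with the second member of each pair taken to be the adjoint of the first as is permissible in the C*-setting, the family $(u_iw_j)_{i,j}$ is a quasi-basis for $E = E''E'$: this is the routine check, using that each $w_j^* \in C'$ may be absorbed into $E'$ by $C'$-bilinearity. Consequently
\[
 \Index E = \sum_{i,j} u_iw_jw_j^*u_i^* = \sum_i u_i(\Index E'')u_i^*.
\]
Since $E''$ is $\hat{\Gamma'}$-equivariant, $\Index E'' = [\Gamma':\Gamma'']1_{C'}$ is a scalar, so the right-hand side equals $[\Gamma':\Gamma'']\sum_i u_iu_i^* = [\Gamma':\Gamma'']\Index E' = [\Gamma:\Gamma'][\Gamma':\Gamma'']1_{C}$; taking norms yields $[\Gamma:\Gamma''] = \Index^s E = [\Gamma:\Gamma'][\Gamma':\Gamma'']$. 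I expect the factorization in the second paragraph to be the main obstacle: the composite is built from $E''$, which is only $\hat{\Gamma'}$-equivariant, so one must genuinely use that the coproduct of the quotient $\hat{\Gamma'}$ is the restriction of that of $\hat{\Gamma}$ in order to promote $E''E'$ to an $\hat{\Gamma}$-equivariant map before the uniqueness argument applies.
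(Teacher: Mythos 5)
Your proof is correct and takes essentially the same route as the paper's: both compose the two canonical equivariant conditional expectations, identify the composite (by equivariance and uniqueness) with the equivariant expectation onto $C(\hat{\Gamma''})$, and then multiply indices, using that equivariance forces the Watatani index to be scalar. The only differences are matters of detail: you verify the composition formula of \cite{MR996807} by an explicit quasi-basis computation where the paper cites it, and you settle the infinite cases via condition (i) of Proposition \ref{prop:characterizations of finite index right coideals} (finite generation of algebraic cores) where the paper invokes a general index statement together with the inclusion $C(\hat{\Gamma'})^{\perp} \subset C(\hat{\Gamma''})^{\perp}$ and condition (iv).
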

\begin{proof}
Let $\map{E}{C(\hat{\Gamma})}{C(\hat{\Gamma'})}$ and $\map{E'}{C(\hat{\Gamma'})}{C(\hat{\Gamma''})}$ be the equivariant conditional expectations. Then $\map{E'\circ E}{C(\hat{\Gamma})}[C(\hat{\Gamma''})]$ is also
the equivariant conditional expectation. Now the assertion follows from
a general statement on index \cite[Proposition 1.7.1]{MR996807} under the assumption
$[\Gamma:\Gamma'][\Gamma':\Gamma''] < \infty$. Next we assume
$[\Gamma:\Gamma''] < \infty$. Then a general statement on index shows
$[\Gamma':\Gamma''] < \infty$. We also have $[\Gamma:\Gamma'] < \infty$
as a corollary of (iii) $\iff$ (iv) in Proposition \ref{prop:characterizations of finite index right coideals} since $C(\hat{\Gamma'})^{\perp} \subset C(\hat{\Gamma''})^{\perp}$.
\end{proof}
For quantum subgroups of $\Gamma_1$ and $\Gamma_2$, we define
their intersection $\Gamma_1\cap\Gamma_2$ by
$C(\hat{\Gamma_1\cap \Gamma_2}) = C(\hat{\Gamma_1})\cap C(\hat{\Gamma_2})$. The corresponding hypergroup is $(\Gamma_1\cap \Gamma_2)_{\hyp} = (\Gamma_1)_{\hyp}\cap (\Gamma_2)_{\hyp}$.
\begin{prop}
Let $\Gamma_1$ and $\Gamma_2$ be quantum subgroups of $\Gamma$.
Then we have the following inequalities:
\begin{enumerate}
 \item $[\Gamma:\Gamma_1\cap \Gamma_2] \le [\Gamma:\Gamma_1][\Gamma:\Gamma_2]$.
 \item $[\Gamma_1:\Gamma_1\cap  \Gamma_2] \le [\Gamma:\Gamma_2]$.
\end{enumerate}
\end{prop}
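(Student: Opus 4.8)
The plan is to deduce (i) from (ii) and to prove (ii) directly by restricting a conditional expectation, mimicking the classical argument that $[H_1 : H_1 \cap H_2] \le [G : H_2]$ coming from the injection $H_1/(H_1\cap H_2) \hookrightarrow G/H_2$.

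\emph{Reduction of (i) to (ii).} Since $C(\hat{\Gamma_1\cap \Gamma_2}) = C(\hat{\Gamma_1})\cap C(\hat{\Gamma_2}) \subseteq C(\hat{\Gamma_1})$, the quantum group $\Gamma_1\cap\Gamma_2$ is a quantum subgroup of $\Gamma_1$, so the multiplicativity of the index proved above gives $[\Gamma:\Gamma_1\cap\Gamma_2] = [\Gamma:\Gamma_1]\,[\Gamma_1:\Gamma_1\cap\Gamma_2]$. Combined with (ii) this yields $[\Gamma:\Gamma_1\cap\Gamma_2]\le [\Gamma:\Gamma_1]\,[\Gamma:\Gamma_2]$, the cases where one of the two factors is infinite being trivial. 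Thus it suffices to prove (ii).

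\emph{Proof of (ii).} We may assume $[\Gamma:\Gamma_2] < \infty$, so there is a $\hat{\Gamma}$-equivariant conditional expectation $\map{E}{C(\hat{\Gamma})}{C(\hat{\Gamma_2})}$ of finite index with $\Index^s E = [\Gamma:\Gamma_2]$. The key observation is that $E$ is the Peter--Weyl (isotypic) projection: writing $P_S$ for the Haar-preserving conditional expectation onto the closed span of the matrix coefficients of the representations in a subset $S \subseteq \irr\hat{\Gamma}$ closed under tensor products and conjugates, we have $E = P_{S_2}$ with $S_2 = \irr\hat{\Gamma_2}$, while the analogous projection onto $C(\hat{\Gamma_1})$ is $P_{S_1}$ with $S_1 = \irr\hat{\Gamma_1}$. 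These commute and satisfy $P_{S_1}P_{S_2} = P_{S_1\cap S_2}$, so $E(C(\hat{\Gamma_1})) = P_{S_1\cap S_2}(C(\hat{\Gamma})) = C(\hat{\Gamma_1\cap\Gamma_2})$, using $S_1\cap S_2 = \irr\hat{\Gamma_1\cap\Gamma_2}$ (which is exactly the content of $(\Gamma_1\cap\Gamma_2)_{\hyp} = (\Gamma_1)_{\hyp}\cap(\Gamma_2)_{\hyp}$ together with $C(\hat{\Gamma_1})\cap C(\hat{\Gamma_2}) = C(\hat{\Gamma_1\cap\Gamma_2})$). Since $C(\hat{\Gamma_1})$ is invariant under the coproduct, the restriction $E|_{C(\hat{\Gamma_1})}$ is $\hat{\Gamma_1}$-equivariant, and therefore equals the unique $\hat{\Gamma_1}$-equivariant conditional expectation of $C(\hat{\Gamma_1})$ onto $C(\hat{\Gamma_1\cap\Gamma_2})$, whose scalar index defines $[\Gamma_1:\Gamma_1\cap\Gamma_2]$.

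It then remains to bound the scalar index of this restriction, and here I would use the completely positive characterization of $\Index^s$. Set $c = \Index^s E$, so that $cE - \id_{C(\hat{\Gamma})}$ is completely positive. Restricting its domain to the $C^*$-subalgebra $C(\hat{\Gamma_1})$ keeps it completely positive, and since both $E$ and the identity preserve $C(\hat{\Gamma_1})$, the range of this restriction lies again in $C(\hat{\Gamma_1})$; hence $c\,(E|_{C(\hat{\Gamma_1})}) - \id_{C(\hat{\Gamma_1})}$ is completely positive on $C(\hat{\Gamma_1})$. This forces $[\Gamma_1:\Gamma_1\cap\Gamma_2] = \Index^s(E|_{C(\hat{\Gamma_1})}) \le c = [\Gamma:\Gamma_2]$, which is (ii). The step I expect to require the most care is the identification $E = P_{S_2}$ together with the commutation $P_{S_1}P_{S_2} = P_{S_1\cap S_2}$ at the $C^*$-level, i.e. checking that the equivariant conditional expectation is genuinely the isotypic projection and that the intersection of the two coideals is governed by the intersection of the index sets; the subsequent index estimate is then a soft consequence of the completely positive characterization of $\Index^s$ and needs no manipulation of quasi-bases.
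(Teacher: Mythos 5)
Your argument for (ii) is essentially the paper's own proof, with the suppressed details written out: the paper disposes of (ii) in one line by observing that $E_2|_{C(\hat{\Gamma_1})}$ is the $\hat{\Gamma_1}$-equivariant conditional expectation onto $C(\hat{\Gamma_1\cap\Gamma_2})$, and your identification of the equivariant expectation with the Peter--Weyl isotypic projection, together with the restriction-of-complete-positivity bound on $\Index^s$, is exactly what that line suppresses. For (i), however, you take a genuinely different route. The paper does not derive (i) from (ii): it argues directly that $(\Gamma_1\cap\Gamma_2)_{\hyp} = (\Gamma_1)_{\hyp}\cap(\Gamma_2)_{\hyp}$ forces $\Delta\circ E = (E_1\tensor E_2)\circ\Delta$ (again an isotypic-projection computation), whence $(\Index^s E_1)(\Index^s E_2)\,\Delta\circ E - \Delta$ is completely positive, and the faithfulness of $\Delta$ yields $\Index^s E \le (\Index^s E_1)(\Index^s E_2)$. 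Your route is more economical given what precedes it --- it reuses the tower formula $[\Gamma:\Gamma_1\cap\Gamma_2] = [\Gamma:\Gamma_1][\Gamma_1:\Gamma_1\cap\Gamma_2]$ from the previous proposition and needs only one new inequality --- while the paper's route makes (i) independent of (ii) and of the multiplicativity result.

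One point you share with the paper but should make explicit, since you write $[\Gamma_1:\Gamma_1\cap\Gamma_2] = \Index^s(E|_{C(\hat{\Gamma_1})})$: by the paper's definition, $[C(\hat{\Gamma_1}):C(\hat{\Gamma_1\cap\Gamma_2})]$ equals this scalar index only when the restricted expectation is of finite index in Watatani's sense, i.e.\ admits a quasi-basis, and the existence of a finite $c$ with $cE - \id$ completely positive does not formally produce a quasi-basis for a general inclusion of \cstar-algebras. The gap is fillable from results already available in the paper: the proof of (iii)$\Rightarrow$(ii) in Proposition \ref{prop:characterizations of finite index right coideals} uses nothing but the Pimsner--Popa inequality $x^*x \le r\,E(x^*x)$ and equivariance, so your completely positive bound does imply that $C(\hat{\Gamma_1\cap\Gamma_2})$ is a finite index right coideal of $C(\hat{\Gamma_1})$; alternatively, finiteness follows from Proposition \ref{prop:subhypergroup characterization}, because the natural map $(\Gamma_1)_{\hyp}/(\Gamma_1\cap\Gamma_2)_{\hyp}\longrightarrow \Gamma_{\hyp}/(\Gamma_2)_{\hyp}$ between coset spaces is well defined and injective. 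With that remark added, your proof is complete and correct.
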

\begin{proof}
(i) It suffices to show the statement under the assumption $[\Gamma:\Gamma_i] < \infty$ for $i = 1,2$. Let $E,E_1,E_2$ be the equivariant conditional expectations onto $C(\hat{\Gamma_1\cap \Gamma_2}), C(\hat{\Gamma_1})$
and $C(\hat{\Gamma_2})$ respectively. Then $(\Gamma_1\cap \Gamma_2)_{\hyp} = (\Gamma_1)_{\hyp}\cap (\Gamma_2)_{\hyp}$ implies
$\Delta\circ E = (E_1\tensor E_2)\circ\Delta$.
Hence $(\Index^sE_1)(\Index^sE_2)\Delta\circ E - \Delta$ is completely positive. Then the faithfulness of $\Delta$ implies
$\Index^s E \le (\Index^sE_1)(\Index^sE_2)$, which is nothing but
the statement.

(ii) This follows from that $E_2|_{C(\hat{\Gamma_1})}$ is
the equivariant conditional expectation onto $C(\hat{\Gamma_1\cap \Gamma_2})$,
where $E_2$ is the equivariant conditional expectation onto $C(\hat{\Gamma_2})$.
\end{proof}

The notion of normality on quantum subgroups of compact quantum group
was introduced by S. Wang \cite[Section 2]{MR1316765}
and investigated in \cite{MR3119236}
with a precise connection
to the algebraic theory \cite{MR1048073}. In the dual picture,
we say that a quantum subgroup $\Gamma'$ of $\Gamma$ is normal
when there is a normal quantum subgroup $\H$ of $\hat{\Gamma}$ such that
$C(\hat{\Gamma'}) = C(\H\backslash\hat{\Gamma})$. In this case $\hat{\H}$
is denoted by $\Gamma/\Gamma'$.

The following proposition is a direct consequence of Proposition \ref{prop:characterizations of finite index right coideals} combining
with $\hat{\Gamma}\text{-}\mod_{C(\H\backslash\hat{\Gamma})} \cong \rep^{\fin} \H$, which is true for any quantum subgroup of $\hat{\Gamma}.$

\begin{prop}
 Let $\Gamma'$ be a normal quantum subgroup of $\Gamma$ and $\Gamma'$ be its kernel. Then we have $[\Gamma:\Gamma'] = \abs{\Gamma/\Gamma'}$.
\end{prop}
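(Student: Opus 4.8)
The plan is to read off the index as a Frobenius--Perron dimension and then to recognise that number as the cardinality $\abs{\Gamma/\Gamma'}$. By the definition of normality there is a normal quantum subgroup $\H$ of $\hat\Gamma$ with $C(\hat{\Gamma'}) = C(\H\backslash\hat\Gamma)$ and, by convention, $\Gamma/\Gamma' = \hat\H$. Since $\hat{\hat\H} = \H$, we have $\ell^{\infty}(\hat\H) \cong \prod_{\pi\in\irr\H}B(H_{\pi})$, so that $\abs{\Gamma/\Gamma'} = \abs{\hat\H} = \dim_{\C}\ell^{\infty}(\hat\H) = \sum_{\pi\in\irr\H}(\dim_{\C}H_{\pi})^2$. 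On the other side, the last assertion of Proposition~\ref{prop:characterizations of finite index right coideals} gives $[\Gamma:\Gamma'] = \FPdim\bigl(\hat\Gamma\text{-}\mod^{\fin}_{C(\hat{\Gamma'})}\bigr)$, and the equivalence $\hat\Gamma\text{-}\mod^{\fin}_{C(\H\backslash\hat\Gamma)} \cong \rep^{\fin}\H$ of $\rep^{\fin}\hat\Gamma$-module \cstar-categories identifies the right-hand side with $\FPdim\rep^{\fin}\H$, the module structure being given by the restriction functor $\map{F}{\rep^{\fin}\hat\Gamma}{\rep^{\fin}\H}$. It therefore suffices to show $\FPdim\rep^{\fin}\H = \sum_{\pi\in\irr\H}(\dim_{\C}H_{\pi})^2$.

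To evaluate this Frobenius--Perron dimension I would exhibit the normalised equivariant dimension function explicitly. The restriction functor $F$ leaves the underlying Hilbert space of a representation unchanged, so the assignment $d(\pi) := \dim_{\C}H_{\pi}$ on the objects of $\rep^{\fin}\H$ satisfies $d(F(U)\tensor\pi) = \dim_{\C}H_{U}\cdot d(\pi)$ for all $U \in \rep^{\fin}\hat\Gamma$; hence $d$ is a $\rep^{\fin}\hat\Gamma$-equivariant dimension function, and it is normalised since $1 = \min_{\pi\neq 0}\dim_{\C}H_{\pi}$ is attained at the tensor unit. By the uniqueness of the normalised equivariant dimension function on a connected module category with finitely many irreducible objects, $d$ is the function entering $\FPdim$, and so $\FPdim\rep^{\fin}\H = \sum_{\pi\in\irr\H}(\dim_{\C}H_{\pi})^2$. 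Combining with the previous paragraph yields $[\Gamma:\Gamma'] = \abs{\Gamma/\Gamma'}$ whenever the index is finite.

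I expect the main point to be conceptual rather than computational: it is the \emph{classical} dimension $\dim_{\C}$, not the quantum dimension, that governs $\FPdim$ here, and this is exactly what matches the definition of $\abs{\Gamma/\Gamma'}$ through $\dim_{\C}\ell^{\infty}$. The identity would break if $\FPdim$ were weighted by quantum dimensions; what guarantees the right weighting is that restriction preserves the underlying space, so that $\dim_{\C}$ is $\rep^{\fin}\hat\Gamma$-equivariant. Finally I would dispatch the finiteness dichotomy: if $[\Gamma:\Gamma'] = \infty$ then $\rep^{\fin}\H$ has infinitely many irreducibles, so $\cl{O}(\H) = \bigoplus_{\pi\in\irr\H}B(H_{\pi})$ is infinite dimensional and $\hat\H$ is an infinite discrete quantum group, whence $\abs{\Gamma/\Gamma'} = \infty$; as this argument is reversible, the two sides are finite or infinite together, completing the proof.
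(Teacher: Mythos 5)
Your proposal is correct and follows essentially the same route as the paper: the paper deduces the statement directly from Proposition~\ref{prop:characterizations of finite index right coideals} together with the equivalence $\hat{\Gamma}\text{-}\mod_{C(\H\backslash\hat{\Gamma})} \cong \rep^{\fin}\H$, which is exactly your argument. Your explicit verification that the classical dimension $\dim_{\C}$ is the normalized $\rep^{\fin}\hat{\Gamma}$-equivariant dimension function merely spells out what the paper records in Example~\ref{exam:FPdim for right coideals} via the forgetful functor, and your treatment of the infinite-index case fills in a detail the paper leaves implicit.
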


\section{Descent of finite index quantum subgroup}
\subsection{Unimodular discrete quantum groups}
Let $\mathrm{\G}$ be a compact quantum group. We say that
$\G$ is \emph{of Kac type} when its Haar state is tracial.
This is equivalent to each of the following conditions on the antipode
$S$ of $\cl{O}(\G)$.
\begin{itemize}
 \item The square $S^2$ is the identity.
 \item The antipode $S$ is \astar-preserving.
\end{itemize}
We also say that a discrete quantum group $\Gamma$ is \emph{unimodular}
when the dual compact quantum group $\hat{\Gamma}$ is of Kac type.
In general, a discrete quantum group is not unimodular. For example,
the discrete dual of $SU_q(2)$ for $0 < q < 1$ is not unimodular.

Next we give a brief review of the maximal Kac quantum subgroup,
which was introduced in \cite[Appendix A]{MR2210362}. In this
section we follow \cite[Subsection 2.3]{MR3556413}.
See also
\cite{MR3285870} and \cite[Section 2]{MR4627100}.

Let $\G$ be a compact quantum group and $I$ be
an ideal of $\cl{O}(\G)$ generated by $\{S^2(a) - a\}_{a \in \cl{O}(\G)}$.
Since we have
\begin{align*}
 (S^2(a) -a)^* &= -a^* + S^{-2}(a^*) = S^2(-S^{-2}(a^*)) - (-S^{-2}(a^*)) \in I,\\
\Delta(S^2(a) - a) &= (S^2(a_{(1)}) - a_{(1)})\tensor S(a_{(2)}) + a_{(1)}\tensor (S^2(a_{(2)}) - a_{(2)})\\
& \in I\tensor \cl{O}(\G) + \cl{O}(\G)\tensor I,
\end{align*}
$\cl{O}(\G)/I$ has the induced structure of Hopf \astar-algebra.
Moreover it is generated by matrix coefficients of finite
dimensional unitary corepresentaitons in the sense in \cite[Theorem 1.6.7]{MR3204665}.
Hence there exists a compact quantum group $\G_{\mathrm{Kac}}$
such that $\cl{O}(\G_{\mathrm{Kac}})\cong \cl{O}(G)/I$. The morphism
of compact quantum group corresponding to
$\cl{O}(\G)\longrightarrow \cl{O}(\G_{\mathrm{Kac}})$ is denoted by
$\map{i}{\G_{\mathrm{Kac}}}{\G}$.

Since any morphism of compact quantum group preserves
the antipodes on the algebraic cores, $\G_{\mathrm{Kac}}$ has
the following universal property: For any morphism $\map{\phi}{\K}{\G}$ from a compact quantum group $\K$ of Kac type, there is a unique
morphism $\map{\phi_{\mathrm{Kac}}}{\K}{\G_{\Kac}}$ such that
$\phi = i\circ \phi_{\mathrm{Kac}}$.

By taking the dual, we obtain the \emph{unimodularization} of
a discrete quantum group $\Gamma$.
Namely we define the unimodularization $\Gamma_{\mathrm{uni}}$
as $\hat{\hat{\Gamma}_{\mathrm{Kac}}}$ with $\map{q}{\Gamma}{\Gamma_{\uni}}$, which is the dual of $i_{\Kac}$.
Again $(\Gamma_{\uni},q)$ has a universal property: Any morphism from $\Gamma$
to a unimodular discrete quantum group $\Lambda$ uniquely
factors through $\map{q}{\Gamma}{\Gamma_{\uni}}$.

The following proposition is a corollary of \cite[Appendix A]{MR2210362}, which is applied to the universal form of $\G$.

\begin{prop} \label{prop:descent of fin rep}
Let $\G$ be a compact quantum group. Then the tensor category
$\rep^{\fin}\cl{O}(\G)$ of finite dimensional \astar-representations
of $\cl{O}(\G)$ is equivalent to $\rep^{\fin}\cl{O}(\G_{\Kac})$
under the functor induced by $i$.
\end{prop}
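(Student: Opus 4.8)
The plan is to realize the functor induced by $i$ as a pullback and reduce the whole statement to a single representation-theoretic fact about the antipode. The morphism $i$ corresponds to the quotient map of Hopf \astar-algebras $\map{\pi}{\cl{O}(\G)}{\cl{O}(\G_{\Kac})}$, where $\cl{O}(\G_{\Kac}) = \cl{O}(\G)/I$ and $I$ is generated by $\{S^2(a) - a\}_{a \in \cl{O}(\G)}$. The functor induced by $i$ is then the pullback along $\pi$, sending a finite dimensional \astar-representation $\map{\rho}{\cl{O}(\G_{\Kac})}{B(V)}$ to $\rho\circ\pi$. Since $\pi$ is surjective, a linear map $\map{T}{V}{W}$ intertwines $\rho_1\circ\pi$ and $\rho_2\circ\pi$ if and only if it intertwines $\rho_1$ and $\rho_2$; hence the functor is fully faithful. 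It is moreover a tensor functor, because the tensor product of \astar-representations is formed through the coproduct and $\pi$ is a morphism of Hopf \astar-algebras. So the only thing left to prove is essential surjectivity.

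For essential surjectivity it suffices to show that every finite dimensional \astar-representation $\rho$ of $\cl{O}(\G)$ annihilates $I$: in that case $\rho$ factors as $\rho = \rho'\circ\pi$ for a finite dimensional \astar-representation $\rho'$ of $\cl{O}(\G_{\Kac})$, which is in the image. Because $I$ is generated by $\{S^2(a) - a\}$, this reduces to the single identity $\rho(S^2(a)) = \rho(a)$ for all $a \in \cl{O}(\G)$.

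To establish this identity I would pass to the universal form. A finite dimensional \astar-representation of $\cl{O}(\G)$ is bounded on matrix coefficients, since unitarity of corepresentations makes each $U = (\rho(u^\pi_{ij}))_{ij}$ a unitary in $M_{\dim\pi}\tensor B(V)$; hence $\rho$ extends to the universal C*-algebra $C_u(\G)$ by the universal property of the latter. The assertion $\rho(S^2(a)) = \rho(a)$ is then precisely the statement that $\rho$ factors through the maximal Kac quantum subgroup of the universal form, whose algebraic core is $\cl{O}(\G_{\Kac})$, and this is the content of \cite[Appendix A]{MR2210362}. The mechanism I expect is that composing $\rho$ with the normalized trace on $B(V)$ yields a tracial state on $C_u(\G)$, while $S^2$ is implemented by convolution with the Woronowicz characters $f_{\pm 1}$; the tracial, finite dimensional nature of $\rho$ forces the scaling group, hence these characters, to act as the counit in the representation, so $S^2$ becomes the identity on the image of $\rho$.

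The hard part is exactly this last step. The blockwise description $S^2(u^\pi) = F_\pi u^\pi F_\pi^{-1}$ in terms of the positive $F$-matrices shows that $\rho(S^2(u^\pi_{ij})) = (F_\pi U F_\pi^{-1})_{ij}$, and there is no reason for the unitary $U$ to commute with $F_\pi\tensor 1$; thus one cannot argue one corepresentation at a time. The global coalgebraic input supplied by \cite[Appendix A]{MR2210362}—that the scaling automorphism group acts trivially in every finite dimensional \astar-representation of the universal form—is the one genuinely nontrivial ingredient, and I would isolate it as the sole external citation, with the rest of the proof being the formal pullback argument above.
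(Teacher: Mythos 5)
Your proposal is correct and takes essentially the same route as the paper: the paper gives no argument beyond the remark that the proposition is a corollary of \cite[Appendix A]{MR2210362} applied to the universal form of $\G$, and your write-up is precisely that reduction made explicit---the formal pullback along the surjection $\map{q}{\cl{O}(\G)}{\cl{O}(\G_{\Kac})}$ handling full faithfulness and tensoriality, and essential surjectivity obtained by extending a finite dimensional \astar-representation to the universal C*-algebra and factoring it through the canonical Kac quotient via the tracial state $\tr\circ\rho$. You also correctly isolate the single nontrivial external input (that every finite dimensional \astar-representation of the universal form factors through the maximal Kac quantum subgroup, whose algebraic core is $\cl{O}(\G)/I$), which is exactly the paper's sole citation.
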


Let $(\pi,\cl{H})$ be a finite dimensional \astar-representations
of $\cl{O}(\G)$. If $\G$ is of Kac type,
$\bar{\pi}(x)\xi := \bar{S(x^*)\xi}$ defines a finite dimensional
\astar-representation of $\cl{O}(\G)$. Then the following maps are
intertwiners and gives a solution of the conjugate equation in $\rep^{\fin}\cl{O}(\G)$:
\begin{align*}
 \map{R^*}{\bar{\pi}\tensor \pi}{\eps};\, \bar{\xi}\tensor \eta\longmapsto \ip{\xi,\eta},\\
 \map{\bar{R}^*}{\pi\tensor \bar{\pi}}{\eps};\, \xi\tensor \bar{\eta}\longmapsto \ip{\eta,\xi}.
\end{align*}
Hence $\rep^{\fin}\cl{O}(\G)$ is rigid and the intrinsic dimension of $(\pi,\cl{H})$ is equal to or smaller than $\dim_{\C} \cl{H}$.
On the other hand, the intrinsic dimension is equal to or greater than
$\dim_{\C}\cl{H}$ since the forgetful functor $\rep^{\fin}\cl{O}(\G)\longrightarrow \hilb^{\fin}$ is unitary and monoidal. Hence the intrinsic
dimension of $(\pi,\cl{H})$ is $\dim_{\C}\cl{H}$.

The following is a combination of this observation and Proposition \ref{prop:descent of fin rep}.

\begin{prop}[c.f. {\cite[Corollary 6.6]{MR3285870}}] \label{prop:rigidity and dimension}
Let $\G$ be a quantum group. Then $\rep^{\fin}\cl{O}(\G)$ is rigid
and the forgetful functor to $\hilb^{\fin}$ is dimension-preserving.
\end{prop}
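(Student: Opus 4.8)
The plan is to reduce the statement to the case of a compact quantum group of Kac type, which is exactly the content of the observation immediately preceding the proposition, and then to transport rigidity and the dimension formula along the monoidal equivalence of Proposition~\ref{prop:descent of fin rep}.

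First I would unwind that the functor appearing in Proposition~\ref{prop:descent of fin rep} is the pullback of representations along the quotient Hopf \astar-algebra homomorphism $\cl{O}(\G)\longrightarrow\cl{O}(\G_{\Kac})$ corresponding to $i$: a finite dimensional \astar-representation $\rho$ of $\cl{O}(\G_{\Kac})$ is sent to the composite $\cl{O}(\G)\longrightarrow\cl{O}(\G_{\Kac})\xrightarrow{\rho}B(\cl{H})$, a \astar-representation of $\cl{O}(\G)$ on the \emph{same} Hilbert space, and an intertwiner is sent to itself. Write this functor as $\map{F}{\rep^{\fin}\cl{O}(\G_{\Kac})}{\rep^{\fin}\cl{O}(\G)}$. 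By Proposition~\ref{prop:descent of fin rep} it is a unitary monoidal equivalence, and because pullback leaves the underlying Hilbert space and the underlying linear maps unchanged, $F$ is strictly compatible with the forgetful functors to $\hilb^{\fin}$; that is, $U_{\G}\circ F = U_{\G_{\Kac}}$, where $U_{\G}$ and $U_{\G_{\Kac}}$ denote the respective forgetful functors.

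Second, since $\G_{\Kac}$ is of Kac type, the discussion preceding the statement shows that $\rep^{\fin}\cl{O}(\G_{\Kac})$ is rigid and that $U_{\G_{\Kac}}$ is dimension-preserving, the intrinsic dimension of each object coinciding with the complex dimension of its underlying Hilbert space. Rigidity and the values of the intrinsic dimension are invariants of a \cstar-tensor category that are carried across any unitary monoidal equivalence, so $\rep^{\fin}\cl{O}(\G)$ is rigid. For the dimension claim I would take an arbitrary object $X$, write $X\cong F(Y)$, and compute $d(X) = d(Y) = \dim_{\C}U_{\G_{\Kac}}(Y) = \dim_{\C}U_{\G}(F(Y)) = \dim_{\C}U_{\G}(X)$, where $d$ is the intrinsic dimension and the last two equalities use the strict compatibility of the forgetful functors from the first step.

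The only point that genuinely needs care is this first step: verifying that $F$ is an honest \emph{unitary} monoidal equivalence intertwining the forgetful functors on the nose, so that the standard solutions of the conjugate equations, and hence the intrinsic dimensions, are matched exactly rather than merely up to isomorphism. Once $F$ is identified with pullback along the quotient algebra map, this is immediate, and the remaining steps are a purely formal transport of structure requiring no further computation with the explicit conjugate $\bar{\pi}$ or the maps $R$, $\bar{R}$ beyond what the Kac-type observation already supplies.
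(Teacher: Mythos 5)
Your proposal is correct and follows essentially the same route as the paper: the paper's proof is literally the one-line statement that the proposition is "a combination of this observation and Proposition~\ref{prop:descent of fin rep}," i.e.\ the explicit Kac-type construction of conjugates plus transport along the equivalence $\rep^{\fin}\cl{O}(\G)\cong\rep^{\fin}\cl{O}(\G_{\Kac})$. Your write-up merely makes explicit the details the paper leaves implicit (identifying the equivalence as pullback along $\cl{O}(\G)\longrightarrow\cl{O}(\G_{\Kac})$ and its strict compatibility with the forgetful functors), which is exactly the right justification.
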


Now we prove the remaining part of Proposition \ref{prop:characterizations of finite index right coideals}.

\begin{lemm} \label{lemm:S2-invariance}
Let $A$ be a finite dimensional right $\cl{O}(\G)$-module \cstar-algebra.
Then the action of $\cl{O}(\G)$ is $S^2$-invariant
in the sense that $a \triangleleft x = a\triangleleft S^2(x)$
holds for any $a \in A$ and $x \in \cl{O}(\G)$.
\end{lemm}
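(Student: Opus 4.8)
The plan is to recognize the module action as a finite-dimensional $*$-representation of a compact quantum group and then to descend to the maximal Kac quantum subgroup, on whose algebraic core the square of the antipode is trivial. Set $\rho(x)a := a\triangleleft x$, so that $\map{\rho}{\cl{O}(\G)}{\End_{\C}(A)}$ is a unital anti-homomorphism with $\rho(x)(ab)=\sum(\rho(x_{(1)})a)(\rho(x_{(2)})b)$, and the $*$-algebra compatibility of the action reads $(a\triangleleft x)^*=a^*\triangleleft S(x)^*$. Since $S^2$ generates the defining ideal $I=\langle\,S^2(a)-a\,\rangle$ of $\cl{O}(\G_{\Kac})=\cl{O}(\G)/I$, the assertion $a\triangleleft S^2(x)=a\triangleleft x$ is exactly the statement that $\rho$ annihilates $I$, equivalently that $\rho$ factors through $\cl{O}(\G_{\Kac})$.

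First I would put an inner product on $A$. Equip $A$ with a faithful $\cl{O}(\G)$-invariant trace $\tau$, that is $\tau(a\triangleleft x)=\eps(x)\tau(a)$, and form the Hilbert space $L^2(A,\tau)$ with $\langle a,b\rangle=\tau(a^*b)$. The invariance of $\tau$ yields the integration-by-parts identity $\tau\big((c\triangleleft w)\,d\big)=\tau\big(c\,(d\triangleleft S^{-1}(w))\big)$, which I would check first on group-like $w$. Combining this with $(a\triangleleft x)^*=a^*\triangleleft S(x)^*$ and the Hopf $*$-identity $S(x)^*=S^{-1}(x^*)$, a short computation of the adjoint gives
\[
 \rho(x)^*=\rho\big(S^2(x)^*\big)\qquad\text{on }L^2(A,\tau).
\]
Thus $\rho$ is a $*$-representation of $\cl{O}(\G)$ only up to the modular automorphism $S^2$; in particular $\rho(\cl{O}(\G))$ is already a $*$-closed finite-dimensional subalgebra of $B(L^2(A,\tau))$.

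To remove the twist, note that $S^2$ is implemented on every finite-dimensional representation by the Woronowicz characters, so there is a positive invertible operator $D$ with $\rho(S^2(x))=D\rho(x)D^{-1}$. Conjugating the inner product by $D^{-1}$ turns $\rho$ into an honest $*$-anti-homomorphism, i.e.\ a finite-dimensional $*$-representation of the opposite compact quantum group $\G^{\mathrm{op}}$ (same $*$ and coproduct, antipode $S^{-1}$). Proposition~\ref{prop:descent of fin rep}, applied to $\G^{\mathrm{op}}$, then forces this representation to factor through the Kac quotient, whose defining ideal $\langle\,S^{-2}(a)-a\,\rangle$ coincides with $I$ because $S$ is bijective; hence $\rho(I)=0$ and $\rho(S^2(x))=\rho(x)$. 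Alternatively one can avoid the twisting altogether: by Proposition~\ref{prop:rigidity and dimension} the intrinsic dimension of this object equals $\dim_{\C}A$, and since the intrinsic dimension dominates $\dim_{\C}A$ with equality precisely when the positive modular operator $D$ is trivial, we again obtain $D=1$ and $\rho\circ S^2=\rho$.

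The main obstacle is the construction of the faithful invariant trace $\tau$ together with the positivity of the modular operator $D$; this is where finite-dimensionality and the C*-structure are indispensable. Indeed, the action is an action of the (generally non-amenable) discrete dual, so there is no a priori invariant state, and the canonical trace of $A$ need not be invariant. I would obtain $\tau$ by an averaging argument that uses only that $\rho(\cl{O}(\G))$ is finite-dimensional and that $\cl{O}(\G)$ is cosemisimple: one decomposes $A$ into isotypical components for the action and assembles an invariant inner product component by component, a Maschke-type argument ensuring that the finite-dimensional algebra generated by $\rho(\cl{O}(\G))$ is semisimple and hence unitarizable. Verifying genuine invariance of the resulting trace, rather than mere boundedness of $\rho$, is the delicate technical heart of the proof.
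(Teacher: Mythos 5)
Your overall skeleton---show that the action annihilates the ideal $I=\langle S^2(a)-a\rangle$, i.e.\ descends to $\cl{O}(\G_{\Kac})$, and deduce this from Proposition~\ref{prop:descent of fin rep}---is exactly the skeleton of the paper's proof. But the step you yourself flag as ``the delicate technical heart'' is a genuine gap, and it sits precisely where the paper invokes a theorem your argument lacks. The existence of a faithful $\cl{O}(\G)$-invariant trace $\tau$ on $A$ is not known a priori: a right $\cl{O}(\G)$-module \cstar-algebra is an action of the \emph{discrete} dual, so no averaging is available. In the classical case one is saved because the image of a discrete group in the compact group $\auto(A)$ has compact closure; the quantum analogue of that statement (that the action factors through something of ``compact/Kac type'') is essentially the lemma you are trying to prove. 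Your Maschke-type substitute is circular for the same reason: decomposing $A$ into isotypical components, or knowing that the finite-dimensional algebra generated by $\rho(\cl{O}(\G))$ is semisimple and unitarizable, presupposes the very compatibility you are after (finite-dimensional quotients of $\cl{O}(\G)$ need not be semisimple in general). The untwisting step is also unjustified: the Woronowicz characters implement $S^2$ by conjugation on finite-dimensional unitary \emph{corepresentations}, not on representations of the algebra $\cl{O}(\G)$, so the positive operator $D$ with $\rho(S^2(x))=D\rho(x)D^{-1}$ has no source before you know $\rho$ is (twisted-)$*$-compatible with respect to some inner product---which again needs $\tau$. The alternative ending via Proposition~\ref{prop:rigidity and dimension} does not parse either, since that proposition applies to objects of $\rep^{\fin}\cl{O}(\G)$, i.e.\ to honest $*$-representations, and asserts dimension-preservation for \emph{all} of them, leaving no dichotomy ``equality iff $D=1$'' to exploit.

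The paper bypasses all of this with Vaes's unitary implementation theorem \cite[Proposition 3.7, Theorem 4.4]{MR1814995}: for the action on the finite-dimensional $A$ there is a genuine $*$-representation $\pi$ of $\cl{O}(\G)$ on $L^2(A)$ (the standard form; no invariant state required) such that $a\triangleleft x=\pi(S(x_{(1)}))\,a\,\pi(x_{(2)})$. Since $\pi$ is a finite-dimensional $*$-representation, Proposition~\ref{prop:descent of fin rep} gives $\pi\circ S^2=\pi$ on the nose, with no twist to remove, and then
\begin{align*}
 a\triangleleft S^2(x) = \pi(S^3(x_{(1)}))\,a\,\pi(S^2(x_{(2)})) = \pi(S(x_{(1)}))\,a\,\pi(x_{(2)}) = a\triangleleft x.
\end{align*}
So the missing idea in your proposal is the implementation: one must replace the action operators $\rho(x)$, which are at best $*$-compatible up to a twist, by the implementing representation $\pi$, which is honestly $*$-preserving; once that is done the need for an invariant trace evaporates. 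If you want to salvage your route, you would have to prove the existence of the faithful invariant trace independently, and I see no way to do that which does not pass through the implementation theorem or through the lemma itself.
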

\begin{proof}
By Vaes's implementation theorem \cite[Proposition 3.7, Theorem 4.4]{MR1814995},
there is a \astar-representation $\pi$
of $\cl{O}(\G)$ on $L^2(A)$ such that $a \triangleleft x = \pi(S(x_{(1)}))a\pi(x_{(2)})$ on $L^2(A)$. Then Proposition \ref{prop:descent of fin rep} implies
$\pi(S^2(x)) = \pi(x)$ for all $x \in \cl{O}(\G)$ and we have
\begin{align*}
 a\triangleleft S^2(x) = \pi(S^3(x_{(1)}))a\pi(S^2(x_{(1)})) = \pi(S(x_{(1)}))a\pi(x_{(2)}) = a\triangleleft x.
\end{align*}
\end{proof}
\begin{prop} \label{prop:comparison of dimension and index}
 Let $B$ be a finite index right coideal of $C(\G)$.
Then we have $[C(\G):B] = \dim_{\C}B^{\perp}$.
\end{prop}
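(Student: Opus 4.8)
The statement to prove is that for a finite index right coideal $B$ of $C(\G)$, the scalar index of the $\G$-equivariant conditional expectation $E$ equals $\dim_{\C} B^{\perp}$. The natural approach is to compute both quantities as the categorical dimension of one and the same object in a rigid \cstar-tensor category, using the Frobenius algebra picture already set up in Subsection \ref{subsec:index}. First I would recall that, as noted in that subsection, the \cstar-Frobenius algebra $A = C(\G)_E \tensor_B C(\G)$ in $\G\text{-}\corr_{C(\G)}$ is irreducible and isomorphic to a standard Q-system, so that its categorical dimension equals $\Index^s E = [C(\G):B]$. The task is then to reinterpret this dimension as $\dim_{\C} B^{\perp}$.

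\emph{Translating to the representation side.} The key device is Proposition \ref{prop:corr and rep}, which gives an equivalence $\rep^{\fin}\cl{O}(\G)\cong \G\text{-}\corr^{\rfin}_{C(\G)}$ of \cstar-tensor categories via $H\mapsto H\tensor C(\G)$. Under this equivalence categorical dimensions are preserved, and by Proposition \ref{prop:rigidity and dimension} the intrinsic dimension in $\rep^{\fin}\cl{O}(\G)$ coincides with the vector-space dimension of the underlying Hilbert space. So I would identify the object of $\rep^{\fin}\cl{O}(\G)$ corresponding to the Frobenius algebra $A$, and show its underlying Hilbert space has dimension exactly $\dim_{\C} B^{\perp}$. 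The plausible candidate is the \astar-representation of $\cl{O}(\G)$ on (a space isomorphic to) $B^{\perp}$: indeed $B^{\perp}$ is a finite-dimensional left coideal subalgebra of $\ell^{\infty}(\hat{\G})$ carrying a natural $\cl{O}(\G)$-action, and the equivalence $\G\text{-}\mod^{\fin}_B\cong\rep^{\fin}B^{\perp}$ of Proposition \ref{prop:dual picture of module category} already signals that $B^{\perp}$ is the dual incarnation of the module-category data attached to $B$.

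\emph{Matching the two objects.} Concretely, I expect $C(\G)_E$, viewed through the equivalences as an object of the module category, to correspond to $B^{\perp}$ as a right $\ell^{\infty}(\hat{\G})$-comodule, and the Q-system $A = C(\G)_E\tensor_B C(\G)$ to correspond under Proposition \ref{prop:corr and rep} to the \astar-representation of $\cl{O}(\G)$ on $B^{\perp}$ obtained by composing the module action with its adjoint. Since $\dim_{\C}B^{\perp}<\infty$ by the finite-index hypothesis, Proposition \ref{prop:rigidity and dimension} then gives that the categorical dimension of this object is its linear dimension $\dim_{\C}B^{\perp}$. Chaining the dimension-preserving equivalences yields $[C(\G):B]=\dim A = \dim_{\C}B^{\perp}$.

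\emph{The main obstacle.} The delicate point is the precise identification of the object corresponding to $A$ and the verification that the equivalence of Proposition \ref{prop:corr and rep} (composed with that of Proposition \ref{prop:dual picture of module category}) sends the standard Q-system $A$ to exactly the representation on $B^{\perp}$, so that no multiplicity or normalization factor creeps in. In particular one must check that the internal-end construction producing $A$ on the correspondence side matches the $\cl{O}(\G)$-module structure on $B^{\perp}$ on the representation side, and that the normalized Frobenius--Perron dimension of $\G\text{-}\mod^{\fin}_B$ computed in Example \ref{exam:FPdim for right coideals} as $\dim_{\C}B^{\perp}$ agrees with the categorical dimension of $A$. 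Once this bookkeeping of functors is carried out carefully, the equality of numbers is automatic from rigidity and dimension-preservation; so the essential work is categorical identification rather than any hard analytic estimate.
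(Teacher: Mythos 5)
Your first half reproduces the paper's argument exactly: the index is the categorical dimension of the irreducible Q-system $A = C(\G)_E\tensor_B C(\G)$ in $\G\text{-}\corr^{\rfin}_{C(\G)}$, Proposition \ref{prop:corr and rep} identifies $A$ with $H\tensor C(\G)$ for some finite dimensional \astar-representation $H$ of $\cl{O}(\G)$, and Proposition \ref{prop:rigidity and dimension} gives that the categorical dimension equals $\dim_{\C}H$. The problem is what comes next. The entire content of the proposition is the equality $\dim_{\C}H = \dim_{\C}B^{\perp}$, and at precisely this point your proposal stops proving and starts hoping: you say you ``expect'' $A$ to correspond to a representation of $\cl{O}(\G)$ on $B^{\perp}$, you name this identification as ``the main obstacle,'' and you then declare it to be bookkeeping. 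It is not bookkeeping. Tracking the Q-system through the composite of the equivalences in Propositions \ref{prop:dual picture of module category} and \ref{prop:corr and rep}, and matching the internal-end structure with a concretely described $\cl{O}(\G)$-action on $B^{\perp}$ (which a priori lives on $L^2(B^{\perp})$ via Vaes's implementation theorem, not on $B^{\perp}$ itself), is exactly the kind of argument that needs to be written down; no step in the paper's toolkit makes it automatic.

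The paper avoids this direct categorical matching by a short concrete detour that your proposal is missing. Since the $\G$-invariant part of $H\tensor C(\G)$ is $H\tensor\C 1_{C(\G)}$, one has $\dim_{\C}H = \dim_{\C}(C(\G)_E\tensor_B C(\G))^{\G}$, so it suffices to compute the dimension of the invariant part of $A$ rather than to identify $H$ as a representation. This invariant part is then accessed through the Jones basic construction: the image of $\cl{O}(\G)\tensor\cl{O}(\G)$ in $C(\G)_E\tensor_B C(\G)$ is the subalgebra of $B(L^2(\G))$ generated by $\cl{O}(\G)$ and the orthogonal projection $e_B$ onto the closure of $B$ in $L^2(\G)$, and Lemmas 3.6 and 3.8 of Tomatsu's paper \cite{MR2335776} yield $\dim_{\C}(C(\G)_E\tensor_B C(\G))^{\G} = \dim_{\C}B^{\perp}$. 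Passing to invariants (which turns the identification of an object into a dimension count) and invoking the basic-construction description is the missing idea; without it, or without an honest proof of your conjectured identification of $A$ with a representation on $B^{\perp}$, the proposal does not close.
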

\begin{proof}
As discussed in Subsection \ref{subsec:index},
the index is equal to the categorical dimension of
$C(\G)_E\tensor_B C(\G)$ in $\G\text{-}\corr^{\fin}_{C(\G)}$.
Moreover Proposition \ref{prop:corr and rep} implies that
there is a finite dimensional \astar-representation $H$ of
$\cl{O}(\G)$ such that $C(\G)_E\tensor_B C(\G) \cong H\tensor C(\G)$.
Then Proposition \ref{prop:rigidity and dimension} implies
that the categorical dimension of $C(\G)_E\tensor_B C(\G)$ is equal
to $\dim_{\C} H$.
Since $\G$-invariant part of $H\tensor C(\G)$ is
$H\tensor \C1_{\C(\G)}$,
it suffices to show $\dim_{\C} (C(\G)_E\tensor_B C(\G))^{\G} = \dim_{\C} B^{\perp}$.

Note that the image of $\cl{O}(\G)\tensor \cl{O}(\G)$ in
$C(\G)_E\tensor_B C(\G)$ can be identified with the subalgebra of $B(L^2(\G))$ generated by $\cl{O}(\G)$ and the orthogonal projection $e_B$ onto
the closure of $B$ in $L^2(\G)$. Then \cite[Lemma 3.6, Lemma 3.8]{MR2335776} implies the statement.
\end{proof}

\subsection{Descent to the canonical unimodular quotient}

Next we show an imprimitivity-type result on finite index
right coideals. Let $\map{q}{\cl{O}(\G)}{\cl{O}(\G_{\Kac})}$
be the \astar-homomorphism corresponding to $i$.

\begin{prop} \label{prop:imprimitivity}
Let $B \subset C(\G)$ be a finite index right coideal of $C(\G)$.
Then there is a finite index right coideal $B_{\Kac}$ of $C(\G_{\Kac})$
such that $\cl{B} = q^{-1}(\cl{B}_{\Kac})$,
where $\cl{B}$ and $\cl{B}_{\Kac}$ are the algebraic cores of $B$
and $B_{\Kac}$ respectively. Moreover we have $[C(\G):B] = [C(\G_{\mathrm{Kac}}):B_{\mathrm{Kac}}]$.
\end{prop}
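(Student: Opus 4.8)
The plan is to descend along $q$ by setting $\cl{B}_{\Kac}:=q(\cl{B})$ and letting $B_{\Kac}$ be the norm closure of $q(\cl{B})$ in $C(\G_{\Kac})$. Because $q$ is a surjective homomorphism of Hopf \astar-algebras and $\cl{B}$ is a right coideal \astar-subalgebra of $\cl{O}(\G)$, the image $q(\cl{B})$ is again a right coideal \astar-subalgebra of $\cl{O}(\G_{\Kac})$, so $B_{\Kac}$ is a right coideal of $C(\G_{\Kac})$ whose algebraic core is $\cl{B}_{\Kac}$. It then remains to prove that $B_{\Kac}$ is of finite index with $[C(\G_{\Kac}):B_{\Kac}]=[C(\G):B]$, and that $\cl{B}=q^{-1}(\cl{B}_{\Kac})$.

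For the index I would transport the standard Q-system attached to $B$ across the tensor equivalences at our disposal. From the $\G$-equivariant finite index conditional expectation $E$ onto $B$ one forms, as in Subsection \ref{subsec:index}, the irreducible standard Q-system $A=C(\G)_E\tensor_B C(\G)$ in $\G\text{-}\corr^{\rfin}_{C(\G)}$, whose categorical dimension equals $\Index^s E=[C(\G):B]$. Via the equivalence $\G\text{-}\corr^{\rfin}_{C(\G)}\cong\rep^{\fin}\cl{O}(\G)$ of Proposition \ref{prop:corr and rep}, $A$ becomes a standard Q-system in $\rep^{\fin}\cl{O}(\G)$; applying the tensor equivalence $\rep^{\fin}\cl{O}(\G)\cong\rep^{\fin}\cl{O}(\G_{\Kac})$ of Proposition \ref{prop:descent of fin rep} and then Proposition \ref{prop:corr and rep} for $\G_{\Kac}$, it is carried to an irreducible standard Q-system in $\G_{\Kac}\text{-}\corr^{\rfin}_{C(\G_{\Kac})}$. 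Reading off its underlying algebra object identifies it with $C(\G_{\Kac})_{E_{\Kac}}\tensor_{B_{\Kac}}C(\G_{\Kac})$ for an equivariant finite index expectation $E_{\Kac}$ onto $B_{\Kac}$, so $B_{\Kac}$ is of finite index. Since Proposition \ref{prop:rigidity and dimension} guarantees that the equivalence of Proposition \ref{prop:descent of fin rep} is dimension-preserving, the two Q-systems have equal categorical dimension, and because this dimension computes the scalar index of the corresponding expectation (Subsection \ref{subsec:index}), we obtain $[C(\G_{\Kac}):B_{\Kac}]=[C(\G):B]$.

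The identity $\cl{B}=q^{-1}(\cl{B}_{\Kac})$ is where I expect the real difficulty. The inclusion $\cl{B}\subseteq q^{-1}(\cl{B}_{\Kac})$ is immediate, so the content is the reverse one, equivalently the statement that $\cl{B}$ is saturated for $q$. Here the decisive structural input is the $S^2$-invariance of $\cl{B}$ obtained in the proof of Proposition \ref{prop:characterizations of finite index right coideals} (itself a consequence of Lemma \ref{lemm:S2-invariance} applied to the finite dimensional module \astar-algebra $B^{\perp}$), which also supplies $f\circ S^2=f$ on $B^{\perp}$. The plan is to combine this with the duality description $\cl{B}=\{x\in\cl{O}(\G)\mid (f\tensor\id)\Delta(x)=f(1)x\text{ for all }f\in B^{\perp}\}$ in order to control the kernel $I=\ker q$, which is generated by the elements $S^2(a)-a$: given $x$ with $q(x)\in\cl{B}_{\Kac}$, subtract a representative lying in $\cl{B}$ and show that the remaining element of $I$ already belongs to $\cl{B}$. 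Promoting the $S^2$-stability of $\cl{B}$ into this genuine membership, so that passing to $\G_{\Kac}$ does not enlarge the core, is the step I anticipate to be the main obstacle.
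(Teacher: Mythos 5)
Your proposal does not follow the paper's route, and both of its substantive steps have genuine gaps. On the index side, transporting the standard Q-system $C(\G)_E\tensor_B C(\G)$ through $\G\text{-}\corr^{\rfin}_{C(\G)}\cong\rep^{\fin}\cl{O}(\G)\cong\rep^{\fin}\cl{O}(\G_{\Kac})\cong\G_{\Kac}\text{-}\corr^{\rfin}_{C(\G_{\Kac})}$ does produce an irreducible standard Q-system over $C(\G_{\Kac})$ of the same dimension, but the sentence ``reading off its underlying algebra object identifies it with $C(\G_{\Kac})_{E_{\Kac}}\tensor_{B_{\Kac}}C(\G_{\Kac})$'' is precisely the missing content: nothing in your argument shows that the coideal attached to the transported Q-system is the closure of $q(\cl{B})$, nor even that $\bar{q(\cl{B})}$ admits an equivariant expectation of finite index. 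That identification is essentially the proposition itself, so the argument is circular at this point. The paper never needs it: it proves $[C(\G):B]=\dim_{\C}B^{\perp}$ once and for all (Proposition \ref{prop:comparison of dimension and index}, which is where the Q-system transport actually lives), and then in Proposition \ref{prop:imprimitivity} both the finiteness of the index of $B_{\Kac}$ and the equality of indices drop out of the single identity $B^{\perp}=B_{\Kac}^{\perp}$ inside $\ell^{\infty}(\hat{\G_{\Kac}})\subset\ell^{\infty}(\hat{\G})$, via Proposition \ref{prop:characterizations of finite index right coideals}.

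The step you flagged yourself is indeed the crux, but the plan you sketch cannot close it, because with $\cl{B}_{\Kac}:=q(\cl{B})$ and the set-theoretic preimage the inclusion $q^{-1}(\cl{B}_{\Kac})\subseteq\cl{B}$ fails outright: $q^{-1}(\cl{B}_{\Kac})$ always contains $\ker q$, while $\ker q\not\subseteq\cl{B}$ in general. Concretely, take $\G=SU_q(2)$ and $B=C(SO_q(3))$, the finite index right coideal spanned by matrix coefficients of integer-spin representations; here $\G_{\Kac}$ is the maximal torus $\T$ and $q(\gamma)=0\in\cl{B}_{\Kac}$, yet $\gamma\notin\cl{B}$. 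So no manipulation of elements of $I=\ker q$ will yield the ``genuine membership'' you are after; the relation between $\cl{B}$ and $\cl{B}_{\Kac}$ must be formulated and proved through invariance conditions rather than elementwise. This is exactly the paper's mechanism, and it is the idea absent from your proposal: since $B^{\perp}$ is a finite dimensional right $\cl{O}(\G)$-module \cstar-algebra, Lemma \ref{lemm:S2-invariance} makes its $\cl{O}(\G)$-action descend to $\cl{O}(\G_{\Kac})$, and $\cl{B}_{\Kac}$ is \emph{defined} inside $\cl{O}(\G_{\Kac})$ by the same $B^{\perp}$-invariance condition that cuts out $\cl{B}$ in Tomatsu's description $\cl{B}=\{x\in\cl{O}(\G)\mid(f\tensor\id)\Delta(x)=f(1)x\ \text{for all}\ f\in B^{\perp}\}$; comparing the two conditions (note that semisimplicity of modules over the finite dimensional \cstar-algebra $B^{\perp}$ gives $\cl{B}_{\Kac}=q(\cl{B})$) is what replaces your missing step and simultaneously yields $B^{\perp}=B_{\Kac}^{\perp}$. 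Even then, passing from what $q(x)\in\cl{B}_{\Kac}$ literally gives, namely $(f\tensor\id)\Delta(x)-f(1)x\in\ker q$ for all $f\in B^{\perp}$, to actual membership $x\in\cl{B}$ requires this module-theoretic argument; so your anticipation that this point is the main obstacle is well founded, and it is the reason the proof must be organized around $B^{\perp}$ rather than around elements of $\cl{O}(\G)$.
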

\begin{proof}
 By assumption $B^{\perp}$ is finite dimesional. Then
Lemma \ref{lemm:S2-invariance} implies
$f\triangleleft x = f\triangleleft S^2(x)$ for all $f \in B^{\perp}$
and $x \in \cl{O}(\G)$.
Hence the action descends to an action of $\cl{O}(\G_{\mathrm{Kac}})$.
Now we define $\cl{B}_{\mathrm{Kac}} \subset \cl{O}(\G_{\mathrm{Kac}})$
as follows:
\begin{align*}
 \cl{B}_{\mathrm{Kac}} := \{x \in \cl{O}(\G_{\mathrm{Kac}})\mid (f\triangleleft x_{(1)})(1)x_{(2)} = f(1)x \text{ for all }f \in B^{\perp}\}.
\end{align*}
Then $q^{-1}(\cl{B}_{\mathrm{Kac}}) = \cl{B}$ holds
since we have $(f\triangleleft q(x))(1) = f(x)$
and we also have
\begin{align*}
 \cl{B} = \{x \in \cl{O}(\G)\mid (f\tensor \id)\Delta(x) = f(1)x \text{ for all }f \in B^{\perp}\}
\end{align*}
by \cite[Lemma 2.6]{MR2276175} (c.f. \cite[Theorem 3.9]{MR3128415}).
This fact implies that the closure $B_{\mathrm{Kac}}$ of
$\cl{B}_{\mathrm{Kac}}$ is a right coideal of $C(\G_{\mathrm{Kac}})$
since $\map{q}{\cl{O}(\G)}{\cl{O}(\G_{\mathrm{Kac}})}$ is surjective.
We also can see that $B_{\mathrm{Kac}}$ is of finite index
and $[C(\G):B] = [C(\G_{\mathrm{Kac}}):B_{\mathrm{Kac}}]$
since $B^{\perp} = B_{\mathrm{Kac}}^{\perp} \subset \ell^{\infty}(\hat{\G_{\Kac}}) \subset \ell^{\infty}(\hat{\G})$.
\end{proof}

We apply this proposition to
quantum subgroups of finite index.
Let $\Gamma$ be a discrete quantum group and $\Gamma' \subset \Gamma$
be a quantum subgroup of finite index. Then the above theorem implies
that there is a coideal $B_{\Kac}$ of $C(\hat{\Gamma_{\uni}})$ such
that $\cl{O}(\hat{\Gamma'}) = q^{-1}(\cl{B}_{\Kac})$. Then the
surjectivity of $q$ implies that $\cl{B}_{\Kac} = q(\cl{O}(\hat{\Gamma'}))$. Hence $B_{\Kac}$ also corresponds to a finite index quantum subgroup $\Gamma'_{\uni}$ of $\Gamma'$.

Before stating a special property of $\Gamma'$, we prepare
a notion on multi-valued maps. Let $X,Y$
be sets and consider a map $\map{\phi}{X}{\cl{P}(Y)}$. We define
$\phi^{-1}(A) := \{x \in X\mid A\cap \phi(x) \neq \emptyset\}$ for
a subset $A \subset Y$.
We say that a subset $A \subset X$ is $\phi$-stable when
$\phi^{-1}(\phi(A)) = A$ holds.
\begin{lemm} \label{lemm:stability}
If $\Gamma'$ is a quantum subgroup of $\Gamma$
of finite index, $\Gamma'_{\hyp}$ is $q_{\hyp}$-stable.
\end{lemm}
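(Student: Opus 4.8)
The inclusion $\Gamma'_{\hyp} \subseteq q_{\hyp}^{-1}(q_{\hyp}(\Gamma'_{\hyp}))$ is automatic, because $q_{\hyp}(\pi')$ is the non-empty set of irreducible constituents of the image of $\pi'$ and hence meets $q_{\hyp}(\Gamma'_{\hyp})$ for every $\pi' \in \Gamma'_{\hyp}$. So the plan is to establish the reverse inclusion $q_{\hyp}^{-1}(q_{\hyp}(\Gamma'_{\hyp})) \subseteq \Gamma'_{\hyp}$. The input I would use is the pair of relations obtained just before the lemma by applying Proposition \ref{prop:imprimitivity} to the finite index right coideal $C(\hat{\Gamma'})$: there is a right coideal $B_{\Kac} \subseteq C(\hat{\Gamma_{\uni}})$ whose algebraic core $\cl{B}_{\Kac}$ satisfies both $\cl{O}(\hat{\Gamma'}) = q^{-1}(\cl{B}_{\Kac})$ and $\cl{B}_{\Kac} = q(\cl{O}(\hat{\Gamma'}))$, where $\map{q}{\cl{O}(\hat{\Gamma})}{\cl{O}(\hat{\Gamma_{\uni}})}$ is the defining surjection.

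For the main step I would pass to matrix coefficients. For $\rho \in \irr\hat{\Gamma}$ write $C_\rho$ for the span of its matrix coefficients, and similarly $C_\sigma \subseteq \cl{O}(\hat{\Gamma_{\uni}})$ for $\sigma \in \irr\hat{\Gamma_{\uni}}$; by construction $q$ carries $C_\rho$ onto the coefficient space of the image representation, so that $q(C_\rho) = \bigoplus_{\sigma \in q_{\hyp}(\rho)} C_\sigma$. Now take $\pi \in q_{\hyp}^{-1}(q_{\hyp}(\Gamma'_{\hyp}))$ and pick $\sigma_0 \in q_{\hyp}(\pi) \cap q_{\hyp}(\Gamma'_{\hyp})$. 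From $\sigma_0 \in q_{\hyp}(\Gamma'_{\hyp})$ I get a $\pi' \in \Gamma'_{\hyp}$ with $\sigma_0 \in q_{\hyp}(\pi')$, hence $C_{\sigma_0} \subseteq q(C_{\pi'}) \subseteq q(\cl{O}(\hat{\Gamma'})) = \cl{B}_{\Kac}$; from $\sigma_0 \in q_{\hyp}(\pi)$ I get $C_{\sigma_0} \subseteq q(C_\pi)$. Choosing a non-zero $b \in C_{\sigma_0}$ and a coefficient $c \in C_\pi$ with $q(c) = b$, I obtain $c \neq 0$ with $q(c) \in \cl{B}_{\Kac}$, so that $c \in q^{-1}(\cl{B}_{\Kac}) = \cl{O}(\hat{\Gamma'})$.

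Finally I would invoke the Peter--Weyl decomposition: the coefficient spaces $\{C_\rho\}_{\rho \in \irr\hat{\Gamma}}$ are linearly independent in $\cl{O}(\hat{\Gamma})$, and $\cl{O}(\hat{\Gamma'}) = \bigoplus_{\rho \in \Gamma'_{\hyp}} C_\rho$; therefore a non-zero element of $C_\pi$ can belong to $\cl{O}(\hat{\Gamma'})$ only when $\pi \in \Gamma'_{\hyp}$, which is exactly the desired conclusion. The whole weight of the argument rests on the imprimitivity relation $\cl{O}(\hat{\Gamma'}) = q^{-1}(\cl{B}_{\Kac})$, where the finite index hypothesis is used, together with the \emph{locality} that $q$ maps $C_\pi$ onto each $C_\sigma$ with $\sigma \in q_{\hyp}(\pi)$, so that one non-zero matrix coefficient already detects membership. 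I expect this translation between the multivalued combinatorial condition of $q_{\hyp}$-stability and the algebraic identity $\cl{O}(\hat{\Gamma'}) = q^{-1}(\cl{B}_{\Kac})$ to be the only delicate point, and in particular no separate analysis of $\stker q_{\hyp}$ is required here.
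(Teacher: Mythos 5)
Your proof is correct and takes essentially the same route as the paper's: both rest on the two identities $\cl{O}(\hat{\Gamma'}) = q^{-1}(\cl{B}_{\Kac})$ and $\cl{B}_{\Kac} = q(\cl{O}(\hat{\Gamma'}))$ supplied by Proposition \ref{prop:imprimitivity}, pull a nonzero matrix coefficient of $\pi$ back into $\cl{O}(\hat{\Gamma'})$, and conclude from the Peter--Weyl block structure of $\cl{O}(\hat{\Gamma'})$. The paper phrases the final step as a dichotomy (for each $\rho$, $B(\cl{H}_{\rho})\cap\cl{O}(\hat{\Gamma'})$ is either $0$ or all of $B(\cl{H}_{\rho})$), which is precisely your linear-independence argument in different words.
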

\begin{proof}
We show that $q_{\hyp}(\Gamma'_{\hyp}) = (\Gamma'_{\uni})_{\hyp}$
and $q_{\hyp}^{-1}((\Gamma'_{\uni})_{\hyp}) = \Gamma'_{\hyp}$.
The former equality follows from the description of $q_{\hyp}$
using the Peter-Weyl decomposition.

To see the latter equality, take $\pi \in q_{\hyp}^{-1}((\Gamma'_{\uni})_{\hyp})$. Then the image of some matrix coefficients of $\pi$ is contained
in $q(\cl{O}(\hat{\Gamma'}))$. Hence $B(\cl{H}_{\pi}) \cap q^{-1}(\cl{O}(\hat{\Gamma'_{\uni}})) \neq 0$. On the other hand we have the following
dichotomy for $\cl{O}(\hat{\Gamma'}) = q^{-1}(\cl{O}(\hat{\Gamma'_{\uni}}))$: either of
$B(\cl{H}_{\rho}) \cap \cl{O}(\hat{\Gamma'}) = 0$ or $B(\cl{H}_{\rho}) \subset \cl{O}(\hat{\Gamma'})$ holds for any $\rho \in \irr\hat{\Gamma}$.
Hence $B(\cl{H}_{\pi}) \subset \cl{O}(\hat{\Gamma'})$, which means $\pi \in \Gamma'_{\hyp}$.
\end{proof}

The following lemma can be seen from the definition of $\phi$-stability.

\begin{lemm} \label{lemm:equivalence relation}
Let $X,Y$ be sets and consider $\map{\phi}{X}{\cl{P}(Y)}$.
Let $\sim_{\phi}$ be an equivalence relation on $X$ generated by $\{(x,x')\in X\times X\mid \phi(x)\cap \phi(x') \neq \emptyset\}$.
Then $A \subset X$
is $\phi$-stable if and only if $A$ is a union of equivalence classes
of $\sim_{\phi}$.
\end{lemm}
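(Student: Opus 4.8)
The plan is to reduce the statement to a purely combinatorial fact about the symmetric relation $R$ on $X$ defined by $x \mathrel{R} x' \iff \phi(x) \cap \phi(x') \neq \emptyset$, whose generated equivalence relation is, by definition, exactly $\sim_{\phi}$.

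First I would unfold the definition of $\phi$-stability. Directly from the definitions,
\[
 \phi^{-1}(\phi(A)) = \{x \in X \mid \phi(x) \cap \phi(A) \neq \emptyset\} = \{x \in X \mid \exists\, a \in A,\ \phi(x) \cap \phi(a) \neq \emptyset\},
\]
so $\phi^{-1}(\phi(A))$ is precisely the set of $x \in X$ that are $R$-related to some element of $A$. Since $\phi(a) \cap \phi(a) = \phi(a) \neq \emptyset$ for every $a$ (the values of $\phi$ are non-empty in our situation, e.g. for $q_{\hyp}$), we always have $A \subseteq \phi^{-1}(\phi(A))$; hence $A$ is $\phi$-stable if and only if the reverse inclusion holds, i.e. if and only if $A$ is \emph{closed under $R$} in the sense that $x \mathrel{R} a$ and $a \in A$ force $x \in A$.

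The second step is to match closure under $R$ with being a union of $\sim_{\phi}$-classes, the latter being the same as closure under the equivalence relation $\sim_{\phi}$. One direction is immediate: every $R$-related pair is also $\sim_{\phi}$-related, so any set closed under $\sim_{\phi}$ is closed under $R$. For the converse, suppose $A$ is closed under $R$ and take $a \in A$ with $x \sim_{\phi} a$. As $\sim_{\phi}$ is the equivalence relation generated by $R$, there is a finite chain $a = x_0, x_1, \ldots, x_n = x$ with $x_i \mathrel{R} x_{i+1}$ for each $i$; a straightforward induction on $n$, using the symmetry of $R$ and closure under $R$ at each step, gives $x \in A$. Thus $A$ is closed under $\sim_{\phi}$.

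Combining the two steps yields the chain of equivalences: $A$ is $\phi$-stable $\iff$ $A$ is closed under $R$ $\iff$ $A$ is closed under $\sim_{\phi}$ $\iff$ $A$ is a union of $\sim_{\phi}$-equivalence classes. I expect no real obstacle here: the only point requiring a moment's care is the containment $A \subseteq \phi^{-1}(\phi(A))$, which is where one uses that $\phi$ takes non-empty values; everything else is routine bookkeeping with the generated equivalence relation.
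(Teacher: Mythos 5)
Your proof is correct, and it is precisely the routine verification that the paper leaves to the reader: the paper gives no argument for this lemma beyond the remark that it ``can be seen from the definition of $\phi$-stability,'' and your two steps (identifying $\phi^{-1}(\phi(A))$ as the set of elements $R$-related to some element of $A$, then the chain induction matching closure under $R$ with closure under $\sim_{\phi}$) are exactly that verification filled in. One point you flag only in passing deserves emphasis: the inclusion $A \subseteq \phi^{-1}(\phi(A))$ genuinely requires $\phi$ to take non-empty values, and for a completely arbitrary $\phi$ the lemma as literally stated is false --- if $\phi(a) = \emptyset$, then $\{a\}$ is a $\sim_{\phi}$-equivalence class but $\phi^{-1}(\phi(\{a\})) = \emptyset \neq \{a\}$, so it is not $\phi$-stable --- whereas the hypothesis is harmless in the paper's applications, where $\phi$ is a hypergroup morphism (such as $q_{\hyp}$) and all its values are automatically non-empty.
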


This lemma has the following corollary.

\begin{lemm}
Let $\map{\phi}{G}{H}$ be a morphism of hypergroups.
There is a minimum $\phi$-stable subhypergroup of $G$.
\end{lemm}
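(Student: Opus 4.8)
The plan is to realize the minimum $\phi$-stable subhypergroup as the intersection of the family $\cl{F}$ of all $\phi$-stable subhypergroups of $G$, once I have checked that $\cl{F}$ is nonempty and closed under arbitrary intersection. First I would note that $G$ itself lies in $\cl{F}$: it is a subhypergroup of itself, and by Lemma \ref{lemm:equivalence relation} it is $\phi$-stable, being the union of all $\sim_{\phi}$-equivalence classes. Hence $\cl{F} \neq \emptyset$, and it makes sense to form $K := \bigcap_{L \in \cl{F}} L$. Granting that $K$ again belongs to $\cl{F}$, it is by construction contained in every member of $\cl{F}$ and is itself $\phi$-stable, so it is the desired minimum.

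The subhypergroup part of the verification is routine. Every $L \in \cl{F}$ contains the neutral element $e$ and is closed under the product $\star$ and under the inversion, so the same closure properties descend to the intersection: if $x, y \in K$ then $x \star y \subseteq L$ for each $L \in \cl{F}$, whence $x \star y \subseteq K$, and likewise $\bar{x} \in K$ whenever $x \in K$. Thus $K$ is a subhypergroup of $G$ with the induced structure, and the only remaining point is its $\phi$-stability.

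The step requiring the most care, and where Lemma \ref{lemm:equivalence relation} is essential, is precisely this $\phi$-stability of $K$. The raw defining condition $\phi^{-1}(\phi(A)) = A$ does not obviously pass to intersections, since $\phi^{-1}\circ\phi$ need not commute with intersections of subsets. Instead I would invoke the characterization of Lemma \ref{lemm:equivalence relation}: a subset is $\phi$-stable exactly when it is a union of $\sim_{\phi}$-equivalence classes. Since each $L \in \cl{F}$ is such a union, every equivalence class is either contained in $L$ or disjoint from $L$; consequently a class $C$ satisfies $C \subseteq K$ if and only if $C \subseteq L$ for all $L \in \cl{F}$, and otherwise $C$ meets $K$ trivially. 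Therefore $K$ is itself a union of $\sim_{\phi}$-equivalence classes, hence $\phi$-stable, which completes the identification of $K$ as the minimum $\phi$-stable subhypergroup. I expect the genuine content to be concentrated entirely in this reduction through Lemma \ref{lemm:equivalence relation}, as closure of the condition $\phi^{-1}(\phi(A)) = A$ under intersection is not automatic without passing to the equivalence-class description.
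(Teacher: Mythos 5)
Your proposal is correct and follows essentially the same route as the paper: both form the intersection of all $\phi$-stable subhypergroups and invoke Lemma \ref{lemm:equivalence relation} (the union-of-$\sim_{\phi}$-classes characterization) to see that $\phi$-stability survives the intersection, the subhypergroup axioms being routine. Your write-up merely makes explicit the points the paper leaves implicit (nonemptiness of the family via $G$ itself, and why raw $\phi$-stability does not obviously pass to intersections).
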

\begin{proof}
We can obtain the minimum by taking the intersection over all $\phi$-stable subhypergroups, which is actually a subhypergroup and $\phi$-stable
from the previous lemma.
\end{proof}

\begin{defn}
 A \emph{stable kernel} of a hypergroup morphism
$\map{\phi}{G}{H}$ is the minimum $\phi$-stable hypersubgroup.
It is denoted by $\stker \phi$
\end{defn}
\begin{rema}
If $\phi$ is a group homomorphism, the $\phi$-stable kernel coincides
with the usual kernel.
\end{rema}

The following is a combination of Lemma \ref{lemm:stability}
and a property of double coset hypergroups (\cite[Theorem 3.4.6]{MR4696701}).

\begin{thrm} \label{thrm:reduction to the unimodularization}
Let $\Gamma$ be a discrete quantum group and $\map{q}{\Gamma}{\Gamma_{\uni}}$ be its unimodularization. Then there is a natural 1-to-1
correspondence between the following:
\begin{itemize}
 \item finite index quantum subgroups of $\Gamma$,
 \item finite index subhypergroups of $\stker q_{\hyp}\backslash \Gamma_{\hyp}/\stker q_{\hyp}$.
\end{itemize}
\end{thrm}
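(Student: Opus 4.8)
I want to establish a bijection between finite index quantum subgroups of $\Gamma$ and finite index subhypergroups of the double coset hypergroup $\stker q_{\hyp}\backslash \Gamma_{\hyp}/\stker q_{\hyp}$. The strategy is to compose two correspondences: first the correspondence between quantum subgroups of $\Gamma$ and subhypergroups of $\Gamma_{\hyp}$ provided by Proposition~\ref{prop:hypergroup description of quantum subgroups}, and then a purely hypergroup-theoretic correspondence between $q_{\hyp}$-stable subhypergroups of $\Gamma_{\hyp}$ and subhypergroups of the double coset hypergroup. The essential point making this work is Lemma~\ref{lemm:stability}, which tells us that a quantum subgroup $\Gamma'$ of finite index always has $\Gamma'_{\hyp}$ being $q_{\hyp}$-stable. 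So I first need to argue that finite index quantum subgroups correspond precisely to finite index $q_{\hyp}$-stable subhypergroups of $\Gamma_{\hyp}$.

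\emph{First step.} I would show that under the bijection of Proposition~\ref{prop:hypergroup description of quantum subgroups}, a quantum subgroup $\Gamma'$ is of finite index if and only if $\Gamma'_{\hyp}$ is a finite index subhypergroup, and that in this case $\Gamma'_{\hyp}$ is automatically $q_{\hyp}$-stable by Lemma~\ref{lemm:stability}. The finiteness equivalence comes from Proposition~\ref{prop:subhypergroup characterization}, which says $[\Gamma:\Gamma'] < \infty$ iff $\abs{\Gamma_{\hyp}/\Gamma'_{\hyp}} < \infty$. Conversely, I must check that any $q_{\hyp}$-stable subhypergroup $K$ of $\Gamma_{\hyp}$ contains $\stker q_{\hyp}$: indeed, a $q_{\hyp}$-stable subhypergroup is a union of $\sim_{q_{\hyp}}$-equivalence classes by Lemma~\ref{lemm:equivalence relation}, and since it contains $\mathbf{1}$ it contains the whole class of $\mathbf{1}$, which is exactly $\stker q_{\hyp}$.

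\emph{Second step.} The hypergroup-theoretic heart is to translate between $q_{\hyp}$-stable subhypergroups of $\Gamma_{\hyp}$ containing $\stker q_{\hyp}$ and arbitrary subhypergroups of the double coset hypergroup $\stker q_{\hyp}\backslash \Gamma_{\hyp}/\stker q_{\hyp}$. Writing $N := \stker q_{\hyp}$, I would use the quotient map $\pi\colon \Gamma_{\hyp} \to N\backslash\Gamma_{\hyp}/N$ and show that $K\mapsto \pi(K)$ and $L \mapsto \pi^{-1}(L)$ give mutually inverse inclusion-preserving bijections between subhypergroups $K$ of $\Gamma_{\hyp}$ with $N\subset K$ that are closed under left and right multiplication by $N$ (equivalently $N$-biinvariant unions of double cosets, which is exactly the $q_{\hyp}$-stable condition) and subhypergroups $L$ of the double coset hypergroup. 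This is precisely what the cited structure theory of double coset hypergroups (\cite[Theorem 3.4.6]{MR4696701}) supplies; the key compatibility I must verify is that $\pi$ respects the hypergroup products, so that the image and preimage of a subhypergroup is again a subhypergroup. Finiteness of index is preserved under this correspondence because $\abs{\Gamma_{\hyp}/K}$ and the index of $\pi(K)$ in the double coset hypergroup differ only by the finite factor coming from $N$-cosets; here I would need the finiteness of the fibers, which follows since $\stker q_{\hyp}$ has finitely many elements in each relevant double coset—or more directly, both index notions are finite simultaneously because $\pi$ induces a bijection of the respective coset spaces.

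\emph{Main obstacle.} I expect the genuinely delicate part to be the index-finiteness bookkeeping across the double-coset quotient, namely confirming that $\abs{\Gamma_{\hyp}/K} < \infty$ is equivalent to finite index of $\pi(K)$ in $N\backslash\Gamma_{\hyp}/N$. Unlike the group case, cosets in a hypergroup need not have uniform size and the double coset quotient requires one to check that the natural map on coset spaces $\Gamma_{\hyp}/K \to (N\backslash\Gamma_{\hyp}/N)\big/\pi(K)$ is a bijection, which relies on $K$ being $N$-biinvariant. Once biinvariance is in hand the bijection of coset spaces is formal, so the main work is really marshalling the definitions of the double coset hypergroup product to confirm that $\pi$ and $\pi^{-1}$ land in subhypergroups; this is where \cite[Theorem 3.4.6]{MR4696701} does the heavy lifting, and I would lean on it rather than reprove the structure theory of double coset hypergroups from scratch.
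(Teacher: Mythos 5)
Your proposal follows essentially the same route as the paper, whose entire proof consists of combining Lemma \ref{lemm:stability} (together with Propositions \ref{prop:hypergroup description of quantum subgroups} and \ref{prop:subhypergroup characterization}) with the cited correspondence property of double coset hypergroups \cite[Theorem 3.4.6]{MR4696701} --- precisely the two steps you describe, with the same reliance on Zieschang's theorem for the coset bookkeeping. One minor inaccuracy: the $\sim_{q_{\hyp}}$-class of $\mathbf{1}$ need not equal $\stker q_{\hyp}$ (the stable kernel is by definition the \emph{minimum} $q_{\hyp}$-stable subhypergroup, which contains but may properly contain that equivalence class, since the class itself need not be closed under products); the fact you actually need --- that every $q_{\hyp}$-stable subhypergroup contains $\stker q_{\hyp}$ --- is immediate from this minimality, so your argument is unaffected.
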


\begin{exam}[{\cite[Theorem 5.10]{freslon2024discretequantumsubgroupsfree}, \cite[Main Theorem]{MR4824928}}]
Take $Q \in GL_N(\C)$ with $N \ge 2$. S. Wang and A. van Daele introduced a free unitary quantum group $U_Q^+$ in their paper \cite{MR1382726}. The representation theory of $U_Q^+$ is investigated in
\cite{MR1484551}, which shows that $\hat{U_Q^+}_{\hyp}$ is independent
 of $Q$ and $N \ge 2$. To describe this hypergroup and determine
finite index quantum subgroups of $\hat{U_Q^+}$, we only consider
the case of $Q = \mathrm{diag}\{q,q^{-1}\}$ where $q \in (0,1)$.
Proposition \ref{prop:subhypergroup characterization} implies
that the other cases follow from this case.

The algebra $C(U_Q^+)$ is generated by $u_{ij}$ ($1 \le i,j \le 2$)
with the following relations:
\begin{align*}
 U^* = U^{-1},\quad (Q\bar{U}Q^{-1})^* = (Q\bar{U}Q^{-1})^{-1}
\end{align*}
where $U = (u_{ij})_{ij}$ and $\bar{U} = (u_{ij}^*)_{ij}$.
The coproduct on $C(U_Q^+)$ is uniquely determined by
\begin{align*}
 \Delta(u_{ij}) = u_{i1}\tensor u_{1j} + u_{i2}\tensor u_{2j},
\end{align*}
which makes $U_Q^+ = (C(U_Q^+),\Delta)$ into a compact quantum group.
The algebraic core is the \astar-subalgebra generated by $\{u_{ij}\}_{ij}$ and the square of the antipode $S$ is given as follows:
\begin{align*}
 S^2(u_{11}) = u_{11},\quad
 S^2(u_{12}) = q^4u_{12},\quad
 S^2(u_{21}) = q^{-4}u_{21},\quad
 S^2(u_{22}) = u_{22}.
\end{align*}
Hence the maximal Kac quantum subgroup of $U_Q^+$ can be identified
with $\hat{F_2}$, the dual of the free group of rank 2.h
The corresponding \astar-homomorphism
$\map{\phi}{\cl{O}(U_Q^+)}{\cl{O}(\hat{F_2}) = \C[F_2]}$
is given by $\phi(u_{ii}) = s_i$ and
$\phi(u_{ij}) = 0$ when $i\neq j$, where $s_1$ and $s_2$
are the standard generators of $F_2$.

Note that $U$ and $Q\bar{U}Q^{-1}$ define irreducible finite
dimensional representation $r_1$ and $r_{-1}$ respectively.
Moreover we can see that $r_{-1}$ is the conjugate representation of
$r_1$. To describe all irreducible representations, it is convenient
to consider the underlying set $S$
of the free product $\Z_{\ge 0}\ast\Z_{\le 0}$ of monoid i.e.
the set of finite words consisting of $[1]$ and $[-1]$.
Then we have $\hat{U_Q^+}_{\hyp} \cong S$
\textbf{as a set}. The multi-valued multiplication and
the inverse function can be presented as follows:
\begin{align*}
 ([n_1][n_2]\cdots [n_k])^{-1} &= [-n_k][-n_{k - 1}]\cdots [-n_1],\\
 x\star y &= \{x'y'\mid x = x'w, y = w^{-1}y' \text{ for some }w \in S\}
\end{align*}
where $xy$ is the product of $x,y \in S$ with respect
to the multiplication of $\Z_{\ge 0}\ast \Z_{\le 0}$.
More strongly we have
\begin{align*}
 \pi_x\tensor \pi_y = \bigoplus_{z \in x\star y} \pi_z
\end{align*}
where $\pi_x$ is the corresponding irreducible representation of $U_Q^+$.

To describe the stable kernel of $\phi:\hat{U_Q^+}_{\hyp}\longrightarrow F_2$, we introduce some notions. We say that $w' \in S$ is a \emph{subword} of $w' \in S$ when we can obtain $w'$ by removing $[1][-1]$
or $[-1][1]$ from $w$ repeatedly.
For $w = [n_1][n_2]\cdots [n_l] \in S$, $g \in F_2$
is said to be \emph{of $w$-type} if the reduced expression of $g$ is of the form $g = s_{i_1}^{n_1}s_{i_2}^{n_2}\cdots s_{i_l}^{n_l}$.
We also say $g$ is \emph{of sub-$w$-type} if it is of $w'$-type
for some subword $w'$ of $w$.

The image of $\pi_w$ is the subset of elements of sub-$w$-type.
This property can be firstly seen for an alternative word i.e.
a word of the form $[1][-1]\cdots$ or $[-1][1]\cdots$.
Actually we can use the fusion rule to prove the property
by induction on the length of the word.T
Take a general element $w = [n_1][n_2]\cdots [n_l]$ and
consider $\{i \mid n_in_{i + 1} = 1\} = \{i_1 < i_2 < \cdots < i_k\}$.
We also set $i_0 = 0$.
Then $w_p = [n_{i_{p - 1} + 1}][n_{i_{p - 1} + 2}]\cdots [n_{i_p}]$
is alternative and we have $w = w_1w_2\cdots w_p$. Moreover
we also have $\pi_w = \pi_{w_1}\tensor \pi_{w_2}\tensor \cdots\tensor \pi_{w_p}$. Hence the image of $w$ is the product of the images of $w_1, w_2,\dots, w_p$ in this order. Since any subword of $w$
appears as a product of subwords of $w_1,w_2,\cdots, w_p$,
we obtain the assertion.

In general we say that \emph{$w \in S$ reduces to $n \in \Z$}
if $w = [n_1][n_2]\cdots [n_l]$ and $n_1 + n_2 + \cdots + n_l = n$.
The set of words in $S$ reducing to $n \in \Z$ is denoted by $W_n$.
Then $w$ has $e$ as its subword if and only if $w$ reduces to $0$,
hence $W_0 \subset \stker \phi$. On the other hand,
It is not difficult to see that $W_0$ defines
a subhypergroup of $\hat{U_Q^+}$. Hence we have $\stker\phi = W_0$.
Moreover we also have each $W_n$ is a double coset of $W_0$
since $w^{-1}\star w' \subset W_0$ if $w$ and $w'$ reduces to
the same integer. Hence $\stker\phi \backslash \hat{U_Q^+}_{\hyp}/\stker\phi \cong \Z$ as a hypergroup. Now Theorem \ref{thrm:reduction to the unimodularization} concludes that
the following subhypergroups
exhaust all subhypergroups corresponding to finite index quantum groups of $\hat{U_Q^+}$: for a positive integer $k$, we have
\begin{align*}
 H_k = \bigcup_{n\in \Z} W_{kn}.
\end{align*}
This corresponds to $W^{(k)}$ in \cite[Definition 5.4]{freslon2024discretequantumsubgroupsfree} and $(k\Z,\Z)$ in the classification
theorem \cite[Main Theorem]{MR4824928}.
\end{exam}

\begin{exam}[{\cite[Theorem 5.4]{MR4728596}}]
\label{exam:dual of cLg}
Let $K$ be a connected simply-connected compact Lie group.
In this case a quantum subgroup of $\hat{K}$
corresponds to a quotient compact group of $K$.
Moreover a finite index quantum subgroup of $\hat{K}$
corresponds to a quotient compact group of $K$ with the finite kernel.
Then general theory for compact Lie groups says that
such quotientss are classified by subgroups of $P/Q$,
where $P$ and $Q$ are the associated weight lattice and
the associated root lattice respectively.

We can recover this classification using the Drinfeld-Jimbo
deformation of $K$.
The Drinfeld-Jimbo deformation $K_q$ is defined as
a compact quantum group for each of
$q \in (0,1)$. See \cite[Section 2.4]{MR3204665} for detailed
description. We briefly recall some outstanding properties of $K_q$:
\begin{itemize}
 \item It contains the maximal torus $T$ of $K$ as the maximal Kac quantum subgroup. (\cite[Lemma 4.10]{MR2335776})
 \item We have $(\hat{{K}_q})_{\hyp}\cong \hat{{K}}_{\hyp}$ as hypergroups. Moreover we can take this isomorphism so that the unimodularization $\map{q_{\hyp}}{(\hat{{K}_q})_{\hyp}}{\hat{T}_{\hyp}}$ coincides with the morphism $\phi\colon \hat{K}_{\hyp}\longrightarrow{\hat{T}_{\hyp}}$ induced from $T \subset K$.
\end{itemize}
We determine $\stker \phi$ from these properties. Note that
$\hat{T}_{\hyp} = P$.
By the highest weight theory,
all irreducible representations corresponding to dominant
integral weights in $Q$ are contained in $\stker \phi$.
On the other hand, the set of such representations defines
a subhypergroup $\hat{K}_0$ of $\hat{K}_{\hyp}$. Hence we have
$\stker \phi = \hat{K}_0$. For $[\lbd] \in P/Q$,
we define $\hat{K}_{[\lbd]}$ as the set of
irreducible representations of $K$ whose weights are contained in
$[\lbd] \subset P$. Then, $[\lbd] \in P/Q \longmapsto \hat{K}_{[\lbd]}$
gives an isomorphism $P/Q\cong \hat{K}_{\hyp}/\stker\phi = \stker\phi\backslash\hat{K}_{\hyp}/\stker\phi$. Hence the finite index quantum subgroups of $\hat{K}$ are classified by the subgroups of $P/Q$.
\end{exam}

\section{Application to free products of discrete quantum groups}

In this section we apply Theorem \ref{thrm:reduction to the unimodularization} to free products of discrete quantum groups with a special property.
We begin with some discussion on free products of discrete quantum groups and hypergroups.

\subsection{Free products of discrete quantum groups and hypergroups}
Let $\HypGrp$ be a category of hypergroups, \textbf{whose
morphisms are defined in Subsection \ref{subsec:hypergroup}}.
Note that another category of hypergroups, which is equipped with
a different type of morphisms, has been extensively studied in the literature.
See \cite{nakamura2024categorieshypermagmashypergroupsrelated}
for instance.

 In the category $\HypGrp$,
we have a coproduct for any family of objects.
Let $\{H_{\lbd}\}_{\lbd \in \Lambda}$ bea family of hypergroups. Then the underlying set of the coproduct is given by:
\begin{align*}
 \bigast_{\lbd \in \Lambda} H_{\lbd} = \{e\}\sqcup\bigsqcup_{n = 1}^{\infty}\bigsqcup_{\lbd_1\neq\lbd_2\neq\cdots\neq \lbd_n}H_{\lbd_1}^*\times H_{\lbd_2}^*\times \cdots\times H_{\lbd_n}^*
\end{align*}
where $H^* := H\setminus\{e\}$ for a hypergroup $H$.
The following description of this set is also convenient:
define $\map{\iota}{\bigsqcup_{\lbd \in \Lambda} H_{\lbd}^*}{\Lambda}$ as $\iota(h) = \lbd$ for $h \in H_{\lbd}^*$. Let $W$ be the
set of finite words on $\bigsqcup_{\lbd \in \Lambda} H_{\lbd}^*$.
Then we can identify $\bigast_{\lbd \in \Lambda} H_{\lbd}$
with the following subset of $W$.
\begin{align*}
\{e\}\cup \{h_1h_2\cdots h_n \in W\mid \iota(h_1)\neq \iota(h_2)\neq\cdots \neq \iota(h_n)\}.
\end{align*}

Then a hypergroup structure on $\bigast_{\lbd \in \Lambda} H_{\lbd}$ is introduced as follows: The neutral element is $e$.
The inverse is given by $h_1h_2\cdots h_n \longmapsto \bar{h_n}\,\bar{h_{n - 1}}\cdots \bar{h_1}$. The multi-valued product
is defined recursively as follows:
\begin{align*}
& h_1h_2\cdots h_n\star h_{n + 1}h_{n + 2}\cdots h_{n + m} \\
&:=
\begin{cases}
 \{h_1h_2\cdots h_{n + m}\} & (\iota(h_n)\neq \iota(h_{n + 1})),\\
 \{h_1h_2\cdots h_{n - 1}hh_{n + 2}\cdots h_{n + m}\mid h \in h_nh_{n +1}\} & (\iota(h_n) = \iota(h_{n + 1}), h_n\neq \bar{h_{n + 1}})
\end{cases}
\end{align*}
and
\begin{align*}
h_1\cdots h_{n - 1}h_n\star \bar{h_n}h_{n + 2}\cdots h_{n + m} :=
 \{h_1\cdots &h_{n - 1}hh_{n + 2}\cdots h_{n + m}\mid h \in (h_n\star \bar{h_n})\setminus\{e\}\}\\
&\cup h_1\cdots h_{n-1}\star h_{n + 2}\cdots h_{n + m}.
\end{align*}
We call this hypergroup the \emph{free product} of $\{H_{\lbd}\}_{\lbd \in \Lambda}$. It is not difficult to see that $\bigast_{\lbd \in \Lambda} H_{\lbd}$ gives a coproduct of $\{H_{\lbd}\}_{\lbd \in \Lambda}$ in $\HypGrp$ with the canonical morphisms
$\{\map{i_{\lbd}}{H_{\lbd}}{H}\}_{\lbd \in \Lambda}$.

We also have the notion of free products of
discrete quantum groups, which was introduced in
\cite[Theorem 3.4]{MR1316765} as free products of
Woronowicz \cstar-algebras. The following fact is known:

\begin{prop}[{\cite[Theorem 3.10]{MR1316765}}]
 Let $(\Gamma_{\lbd})_{\lbd \in \Lambda}$ be a family of discrete
quantum groups. Then we have a canonical isomorphism
$(\bigast_{\lbd \in \Lambda} \Gamma_{\lbd})_{\hyp}\cong \bigast_{\lbd \in \Lambda} (\Gamma_{\lbd})_{\hyp}$ as hypergroups.
\end{prop}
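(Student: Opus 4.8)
The plan is to realize the desired isomorphism as the canonical morphism coming from the universal property of the free product hypergroup, and then to reduce everything to a single genuinely representation-theoretic fact, namely Wang's description of the irreducibles of a free product of compact quantum groups.

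First I would record the canonical morphism. For each $\lbd$ the factor $\Gamma_{\lbd}$ is a quantum subgroup of $\bigast_{\mu}\Gamma_{\mu}$ (on the dual side $C(\hat{\Gamma_{\lbd}})$ is a Woronowicz subalgebra of the free product satisfying the cancellation property), so by the discussion in Subsection~\ref{subsec:hypergroup} the induced map $\map{i_{\lbd,\hyp}}{(\Gamma_{\lbd})_{\hyp}}{(\bigast_{\mu}\Gamma_{\mu})_{\hyp}}$ is single-valued and injective, sending a nontrivial $\pi\in\irr\hat{\Gamma_{\lbd}}$ to the length-one word $[\pi]$. Since $\bigast_{\lbd}(\Gamma_{\lbd})_{\hyp}$ is the coproduct in $\HypGrp$, the family $(i_{\lbd,\hyp})_{\lbd}$ induces a morphism
\begin{align*}
 \map{\Phi}{\bigast_{\lbd}(\Gamma_{\lbd})_{\hyp}}{(\bigast_{\lbd}\Gamma_{\lbd})_{\hyp}}.
\end{align*}
Evaluating $\Phi$ on an alternating word $h_1 h_2\cdots h_n$, I would first write it in the source as the product of the canonical images of its letters, which is a singleton by the concatenation rule for distinct indices; applying multiplicativity of $\Phi$ and then the alternating-index fusion rule in the target forces $\Phi(h_1\cdots h_n)=i_{\iota(h_1),\hyp}(h_1)\star\cdots\star i_{\iota(h_n),\hyp}(h_n)$ to be the singleton consisting of the tensor product $h_1\tensor\cdots\tensor h_n$, viewed as a representation of $\bigast_{\lbd}\hat{\Gamma_{\lbd}}$. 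In particular $\Phi$ is single-valued and $\Phi(\mathbf{1})=\mathbf{1}$.

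Next I would prove that $\Phi$ is a bijection, which is where the representation-theoretic input enters. By Wang's description of the irreducible representations of a free product of compact quantum groups \cite[Theorem 3.10]{MR1316765}, every irreducible of $\bigast_{\lbd}\hat{\Gamma_{\lbd}}$ is equivalent to an alternating tensor word $\pi_1\tensor\cdots\tensor\pi_n$ with $\pi_i\in\irr\hat{\Gamma_{\iota(h_i)}}\setminus\{\mathbf{1}\}$ and consecutive indices distinct, and two such words are equivalent only when they are literally equal. The former statement gives surjectivity of $\Phi$ and the latter gives injectivity, so $\Phi$ is a bijection onto $\irr(\bigast_{\lbd}\hat{\Gamma_{\lbd}})=(\bigast_{\lbd}\Gamma_{\lbd})_{\hyp}$.

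Finally I would conclude formally. A single-valued bijective morphism $\Phi$ of hypergroups is automatically an isomorphism: applying the inverse bijection to $\Phi(x\star y)=\Phi(x)\star\Phi(y)$ shows that $\Phi^{-1}$ is again a morphism, and because the inversion function is determined by the multiplication and the neutral element (axiom (iii) of a hypergroup), $\Phi$ automatically intertwines the conjugation $h_1\cdots h_n\mapsto\bar{h_n}\cdots\bar{h_1}$ with passage to conjugate representations. Compatibility of $\Phi$ with the canonical inclusions is built into its construction, so the resulting isomorphism is canonical. The main obstacle is concentrated entirely in the bijectivity step: one must import Wang's classification, and in particular the pairwise inequivalence of distinct alternating tensor words. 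Once that is granted, the universal-property packaging makes multiplicativity of $\Phi$ free, so that the recursive free-product fusion rules recorded in the excerpt emerge as a consequence of the isomorphism rather than needing to be verified case by case against the tensor product decomposition.
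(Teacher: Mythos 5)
Your proof is correct, but it is packaged differently from the paper's, which in fact offers no written argument at all: the proposition is stated with the citation to Wang's Theorem~3.10 in \cite{MR1316765}, the point being that the recursive multiplication on the word hypergroup $\bigast_{\lbd}(\Gamma_{\lbd})_{\hyp}$ was set up precisely to mirror Wang's fusion rules, so the identification of $\irr(\bigast_{\lbd}\hat{\Gamma_{\lbd}})$ with alternating words is an isomorphism by direct comparison of the two sets of rules. You instead produce the map $\Phi$ from the coproduct property of the free product in $\HypGrp$, so multiplicativity comes for free, and the only representation-theoretic input you need from Wang's theorem is the classification statement: alternating tensor words $\pi_1\tensor\cdots\tensor\pi_n$ are irreducible, exhaust the irreducibles of the free product, and are pairwise inequivalent. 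This is a genuine economy, since it separates the combinatorics (universal property) from the analysis (classification of irreducibles) and even recovers the full fusion rules of the free product as a corollary rather than an input. The trade-off is that your argument leans on the paper's unproved assertion that $\bigast_{\lbd}H_{\lbd}$ is a coproduct in $\HypGrp$; checking that assertion requires essentially the same case analysis of the recursive product that you avoid elsewhere, so the combinatorial work is relocated rather than eliminated. Your two formal closing steps are sound: a single-valued bijective morphism is automatically an isomorphism because the inverse bijection is again multiplicative and unital, and it automatically intertwines conjugation since the inverse $\bar{x}$ is characterized by the multiplication and unit alone, as the unique element $w$ with $e \in x\star w$.
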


We also put the identification on the unimodularization,
which immediately follows from the universal property of
the unimodularization and the free product \cite[Theorem 3.4]{MR1316765}.
\begin{prop}
Let $(\Gamma_{\lbd})_{\lbd}$ be a family of discrete quantum groups.
Then there is a canonical isomorphism $(\bigast_{\lbd \in \Lambda}\Gamma_{\lbd})_{\uni}\cong \bigast_{\lbd \in \Lambda}(\Gamma_{\lbd})_{\uni}$
\end{prop}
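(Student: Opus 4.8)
The plan is to prove the proposition purely formally, from the two universal properties already at our disposal. First, the free product $\bigast_{\lbd \in \Lambda}\Gamma_{\lbd}$ is the coproduct of the family $(\Gamma_{\lbd})_{\lbd}$ in the category of discrete quantum groups; dually, Wang's free product $\bigast_{\lbd}\hat{\Gamma_{\lbd}}$ is the product of compact quantum groups obtained from the free product of the Hopf \astar-algebras $\cl{O}(\hat{\Gamma_{\lbd}})$ (\cite{MR1316765}), so that a morphism out of $\bigast_{\lbd}\Gamma_{\lbd}$ is the same datum as a compatible family of morphisms out of the individual $\Gamma_{\lbd}$, implemented by the canonical inclusions $\map{i_{\lbd}}{\Gamma_{\lbd}}{\bigast_{\mu}\Gamma_{\mu}}$. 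Second, the unimodularization $\map{q}{\Gamma}{\Gamma_{\uni}}$ is the reflection of $\Gamma$ onto the full subcategory of unimodular discrete quantum groups: by the stated universal property, every morphism from $\Gamma$ to a unimodular discrete quantum group factors uniquely through $q$. Since a reflection is a left adjoint and left adjoints preserve coproducts, the result should drop out, provided one checks the single compatibility that the free product computed inside the subcategory of unimodular discrete quantum groups agrees with the free product computed in the ambient category.

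The only non-formal ingredient is thus the claim that $\bigast_{\lbd}(\Gamma_{\lbd})_{\uni}$ is again unimodular. I would argue this at the level of antipodes: the algebraic core $\cl{O}(\bigast_{\lbd}\hat{(\Gamma_{\lbd})_{\uni}})$ is the algebraic free product of the Hopf \astar-algebras $\cl{O}(\hat{(\Gamma_{\lbd})_{\uni}})$, hence is generated as a \astar-algebra by the images of the factors' cores. The square $S^2$ of the antipode is an algebra homomorphism, and by the Kac-type characterization it restricts to the identity on each $\cl{O}(\hat{(\Gamma_{\lbd})_{\uni}})$ because each $\hat{(\Gamma_{\lbd})_{\uni}}$ is of Kac type; an algebra homomorphism fixing a generating set is the identity, so $S^2 = \id$ on the whole core and $\bigast_{\lbd}(\Gamma_{\lbd})_{\uni}$ is unimodular. (Equivalently, the Haar state on the free product is the reduced free product of the tracial Haar states, hence tracial.) Consequently the coproduct of the $(\Gamma_{\lbd})_{\uni}$ in the subcategory of unimodular discrete quantum groups is computed by their free product in the ambient category.

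With this in place I would verify directly that the pair consisting of $\bigast_{\lbd}(\Gamma_{\lbd})_{\uni}$ together with the induced map $\bigast_{\lbd}q_{\lbd}\colon \bigast_{\lbd}\Gamma_{\lbd}\to \bigast_{\lbd}(\Gamma_{\lbd})_{\uni}$ satisfies the universal property of the unimodularization of $\bigast_{\lbd}\Gamma_{\lbd}$. Given a unimodular $\Lambda$ and a morphism $\psi\colon \bigast_{\lbd}\Gamma_{\lbd}\to\Lambda$, the coproduct property presents $\psi$ by the family $\psi\circ i_{\lbd}\colon \Gamma_{\lbd}\to\Lambda$; each factors uniquely as $\bar{\psi}_{\lbd}\circ q_{\lbd}$ since $\Lambda$ is unimodular, and the family $(\bar{\psi}_{\lbd})_{\lbd}$ assembles, by the coproduct property of $\bigast_{\lbd}(\Gamma_{\lbd})_{\uni}$, into a unique $\bar{\psi}$ with $\bar{\psi}\circ(\bigast_{\lbd}q_{\lbd}) = \psi$, using the defining relation $(\bigast_{\lbd}q_{\lbd})\circ i_{\lbd} = i'_{\lbd}\circ q_{\lbd}$ between the canonical inclusions $i_{\lbd}$ and $i'_{\lbd}$ of the two free products. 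Uniqueness of $\bar{\psi}$ follows by chasing the same two universal properties, and uniqueness of an object representing a universal property yields the canonical isomorphism $(\bigast_{\lbd}\Gamma_{\lbd})_{\uni}\cong \bigast_{\lbd}(\Gamma_{\lbd})_{\uni}$. I expect the traciality/Kac-type step of the second paragraph to be the only genuine obstacle; the rest is a formal diagram chase that I would keep terse.
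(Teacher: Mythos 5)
Your proof is correct and takes essentially the same route as the paper, which simply observes that the statement ``immediately follows from the universal property of the unimodularization and the free product'' (Wang, Theorem 3.4); your adjoint-functor/diagram-chase argument is exactly this formal step spelled out. The one ingredient you rightly isolate and verify --- that $\bigast_{\lbd}(\Gamma_{\lbd})_{\uni}$ is again unimodular, via $S^2=\id$ on the generating factor cores (equivalently, traciality of the free product Haar state) --- is implicitly assumed in the paper and is a sound completion of the argument.
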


\subsection{Finite index quantum subgroups of free products}
Let $\{\Gamma_{\lbd}\}_{\lbd \in \Lambda}$ be a family of
discrete quantum groups and $\map{q_{\lbd}}{\Gamma_{\lbd}}{\Gamma_{\lbd,\uni}}$ be the unimodularization for each $\lbd \in \Lambda$.
Then $\map{\bigast_{\lbd \in \Lambda} q_{\lbd}}{\bigast_{\lbd \in \Lambda}\Gamma_{\lbd}}{\bigast_{\lbd \in \Lambda}\Gamma_{\lbd,\uni}}$
can be regarded as the unimodularization.

It seems difficult to determine $\stker (\bigast_{\lbd\in \Lambda} q_{\lbd})$ in general, but we can reduce the problem to determining
each $\stker q_{\lbd}$ in the specific case.

Let $G$ be a hypergroup and $H$ be a subhypergroup of $G$.
We say that $H$ is \emph{strongly normal} when $xHx^{-1} \subset H$
holds for all $x \in G$. In this case, we also have $xH = Hx$ for all
$x \in G$, hence the double coset space $H\backslash G/H$
coincides with $G/H$. Also note that $G/H$ is a group by
\cite[Lemma 1.5.2]{MR4696701}.

\begin{prop} \label{prop:stable kernel of free product}
Let $(\map{\phi_{\lbd}}{G_{\lbd}}{H_{\lbd}})_{\lbd \in \Lambda}$
be a family of morphisms of hypergroups. If $\stker \phi_{\lbd}$ is strongly normal for each $\lbd \in \Lambda$,
$\stker \bigast_{\lbd \in \Lambda}\phi_{\lbd}$ is also strongly
normal and we have
$(\bigast_{\lbd \in \Lambda} G_{\lbd})/\stker(\bigast_{\lbd \in \Lambda}\phi_{\lbd}) \cong \bigast_{\lbd} (G_{\lbd}/\stker\phi_{\lbd})$.
\end{prop}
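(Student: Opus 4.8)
The plan is to compare $\phi:=\bigast_{\lambda}\phi_\lambda$ with the free product $\psi:=\bigast_{\lambda}p_\lambda$ of the quotient morphisms $p_\lambda\colon G_\lambda\to G_\lambda/\stker\phi_\lambda=:Q_\lambda$, and to prove that $\stker\phi$ coincides with $K:=\psi^{-1}(\{e\})$. Writing $N_\lambda:=\stker\phi_\lambda$, strong normality makes each $Q_\lambda$ a group and each $p_\lambda$ single-valued, so $\psi$ is a single-valued surjective morphism onto the group $Q:=\bigast_\lambda Q_\lambda$. For such a morphism a routine argument with the hypergroup axioms (the multivalued analogue of the remark identifying $\stker$ with the usual kernel for group homomorphisms) shows that $K$ is a strongly normal subhypergroup, that its cosets are exactly the fibres of $\psi$, that $\stker\psi=K$, and that $G/K\cong Q$. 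Granting $\stker\phi=K$, the proposition follows at once: $\stker\phi$ is then strongly normal and $(\bigast_\lambda G_\lambda)/\stker\phi=G/K\cong Q=\bigast_\lambda(G_\lambda/\stker\phi_\lambda)$.

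For the inclusion $\stker\phi\subseteq K$ I would first record the local fact that the relation $\sim_{\phi_\lambda}$ refines the partition of $G_\lambda$ into cosets of $N_\lambda$. Indeed, if $h\in\phi_\lambda(x)\cap\phi_\lambda(y)$ then (using that morphisms intertwine the inversion functions) $e\in\bar h\star h\subseteq\phi_\lambda(\bar x)\star\phi_\lambda(y)=\phi_\lambda(\bar x\star y)$, so some $z\in\bar x\star y$ satisfies $e\in\phi_\lambda(z)$; such a $z$ is directly $\sim_{\phi_\lambda}$-related to $e$, hence lies in the $\sim_{\phi_\lambda}$-class of $e$, which is contained in the stable subhypergroup $N_\lambda$ by Lemma \ref{lemm:equivalence relation}. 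Thus $(\bar x\star y)\cap N_\lambda\neq\emptyset$, i.e. $xN_\lambda=yN_\lambda$. This well-definedness lets me build, on the image subhypergroup $\phi_\lambda(G_\lambda)\le H_\lambda$, a single-valued morphism $\theta_\lambda\colon\phi_\lambda(G_\lambda)\to Q_\lambda$ with $\theta_\lambda(h)=p_\lambda(x)$ whenever $h\in\phi_\lambda(x)$, so that $\theta_\lambda\circ\phi_\lambda=p_\lambda$. Taking free products produces a single-valued morphism $\Theta:=\bigast_\lambda\theta_\lambda$ on $\phi(G)=\bigast_\lambda\phi_\lambda(G_\lambda)$ with $\Theta\circ\phi=\psi$. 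Hence $K=\{x\in G:\phi(x)\subseteq\stker\Theta\}$; since $\Theta$ is single-valued the point $\Theta(\phi(y))$ is uniquely determined, so membership of one element of $\phi(y)$ in $\stker\Theta$ forces $\phi(y)\subseteq\stker\Theta$, making $K$ a union of $\sim_\phi$-classes as well as a subhypergroup, i.e. $\phi$-stable. Minimality of $\stker\phi$ then gives $\stker\phi\subseteq K$.

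For the reverse inclusion I would begin with the observation that every $\phi$-stable subhypergroup $L$ contains each $N_\lambda$: the intersection $L\cap G_\lambda$ is a $\phi_\lambda$-stable subhypergroup of $G_\lambda$ (if $a\in L\cap G_\lambda$ and $\phi_\lambda(a)\cap\phi_\lambda(b)\neq\emptyset$ with $b\in G_\lambda$, then $a\sim_\phi b$ in $G$, so $b\in L$), hence contains $\stker\phi_\lambda=N_\lambda$ by minimality. In particular $\stker\phi\supseteq\bigast_\lambda N_\lambda$. The decisive remaining point is that $\stker\phi$ is strongly normal; once this is known, $\stker\phi$ contains the strongly normal closure of $\bigcup_\lambda N_\lambda$, and a universal-property argument (each $G_\lambda\to G/\stker\phi$ annihilates $N_\lambda$, hence factors through $Q_\lambda$, and the resulting morphism $\bigast_\lambda Q_\lambda\to G/\stker\phi$ is inverse to the map induced by $\psi$) identifies $G/\stker\phi$ with $Q$ and forces $\stker\phi=K$. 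By associativity of conjugation and the fact that $G$ is generated by its factors, strong normality reduces to showing $a\star\stker\phi\star\bar a\subseteq\stker\phi$ for every single-factor element $a\in G_\nu$.

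I expect this last step to be the main obstacle. When $a$ and the conjugated element lie in the same factor, the inclusion is exactly the strong normality of $N_\nu$ in $G_\nu$; but conjugating an element $m$ of some $N_\lambda$ by an element of a different factor produces a longer reduced word, and one must reconnect it to $\stker\phi$ through the relation $\sim_\phi$, using $\phi$-stability. The transparent case is when $e\in\phi_\lambda(m)$, for then the conjugate meets $\phi_\nu(a\star\bar a)\subseteq\phi_\nu(N_\nu)$ after the middle syllable collapses, so it is $\sim_\phi$-related into $N_\nu\subseteq\stker\phi$; the general case requires an induction that peels off the construction of $N_\lambda$ from its $e$-class, reducing an arbitrary element of $N_\lambda$ to ones directly $\sim_{\phi_\lambda}$-related to $e$, all while tracking the cancellations in the free-product multiplication. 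Everything outside this induction is formal, given the factorization $\Theta$ and the single-valued-morphism-onto-a-group lemma of the first paragraph.
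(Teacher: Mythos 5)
Your overall architecture is reasonable and runs parallel to the paper's: your lemma on single-valued morphisms onto a group plays the role of the paper's Lemma \ref{lemm:characterization of stability}, and your inclusion $\stker\phi\subseteq K$ (via the factorization $\Theta\circ\phi=\psi$, plus the observation that single-valuedness of $\Theta$ makes $K=\psi^{-1}(\{e\})$ a union of $\sim_\phi$-classes and hence $\phi$-stable by Lemma \ref{lemm:equivalence relation}) is essentially sound. One caveat there: your ``local fact'' invokes the claim that morphisms intertwine the inversion functions, i.e.\ $h\in\phi_\lambda(x)\Rightarrow\bar h\in\phi_\lambda(\bar x)$. This is not formal from the paper's definition of morphism, which only requires compatibility with $e$ and with $\star$; from $e\in\phi_\lambda(\bar x)\star\phi_\lambda(x)$ one only gets that \emph{some} element of $\phi_\lambda(x)$ has its inverse in $\phi_\lambda(\bar x)$, not every element, so this point needs its own argument (or a strengthening of the definition).

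The genuine gap, however, is the step you yourself flag as ``the main obstacle'': proving that $\stker\phi$ absorbs conjugation, i.e.\ that words $x_1x_2\cdots x_n\,z\,y_n\cdots y_2y_1$ with $z\in\stker\phi_\lambda$ and $x_k\stker\phi_{\lambda_k}=\bar{y_k}\,\stker\phi_{\lambda_k}$ lie in $\stker\phi$. Without this you cannot identify $G/\stker\phi$ with $Q=\bigast_\lambda G_\lambda/\stker\phi_\lambda$ and close the loop $K\subseteq\stker\phi$, so as written your proposal reduces the proposition to exactly the assertion that still has to be proved. This assertion is the paper's Lemma \ref{lemm:candidates}(ii), and its proof rests on an idea your sketch does not contain: rather than inducting over the construction of an arbitrary $z\in\stker\phi_\lambda$ out of the $\sim_{\phi_\lambda}$-class of $e$ (which would force you to track the alternation of products, inverses and $\sim_{\phi_\lambda}$-saturation generating $\stker\phi_\lambda$, together with all cancellations of the free-product multiplication---there is no evident way to organize that induction), the paper inducts on the length $n$ of the conjugating word and, for fixed $n$, reverses the quantifiers: it considers the set of all $z\in\stker\phi_\lambda$ whose conjugates of length $n+1$ all land in $\stker\phi$, shows this set is closed under multiplication, inversion and the relation $\sim_{\phi_\lambda}$ (the only nontrivial case being distinct indices $\lambda_1\neq\cdots\neq\lambda_{n+1}\neq\lambda$, where the conjugated word is a single reduced word), and notes that it contains $e$ by the induction hypothesis combined with the strong normality of $\stker\phi_{\lambda_{n+1}}$; minimality of the stable kernel then forces this set to be all of $\stker\phi_\lambda$. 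That quantifier-reversal---establishing $\phi_\lambda$-stability of the set of good $z$'s instead of chasing individual elements---is the missing idea you would need to supply to complete your argument.
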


We divide the proof of this proposition into several lemmas.
In the following, $\bigast_{\lbd \in \Lambda} G_{\lbd}, \bigast_{\lbd \in \Lambda} H_{\lbd}$ and $\bigast_{\lbd \in \Lambda} \phi_{\lbd}$
are denoted by $G,H$ and $\phi$ respectively. We regard each
$G_{\lbd}$ as a subhypergroup of $G$ in the canonical way.

\begin{lemm} \label{lemm:candidates}
The stable kernel $\stker \phi$ contains
the following subsets of $G$.
\begin{enumerate}
 \item  $\stker \phi_{\lbd}$ for all $\lbd \in \Lambda$.
 \item $x_1x_2\cdots x_nzy_ny_{n - 1}\cdots y_1$ for
$z \in \stker \phi_{\lbd}$ and $x_k,y_k \in G_{\lbd_k}$ such that
$x_k\stker \phi_{\lbd_k} = \bar{y_k}\stker \phi_{\lbd_k}$.
 \item A product of finitely many subsets of the form in (ii).
\end{enumerate}
\end{lemm}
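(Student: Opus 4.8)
My plan rests on two elementary observations about $\stker\phi$. Since $\stker\phi$ is by definition a $\phi$-stable subhypergroup, Lemma~\ref{lemm:equivalence relation} shows it is a union of $\sim_\phi$-equivalence classes; consequently, if $p\in\stker\phi$ and $\phi(p)\cap\phi(q)\neq\emptyset$ then $q\in\stker\phi$, and in particular (taking $p=e$, so that $\phi(e)=\{e\}$) every $q$ with $e\in\phi(q)$ lies in $\stker\phi$. Moreover, on the canonical copy of $G_\lbd$ in $G$ the morphism $\phi$ restricts to $i_\lbd\circ\phi_\lbd$, where $i_\lbd\colon H_\lbd\hookrightarrow H$ is the structural inclusion; as $i_\lbd$ is injective, for $g,g'\in G_\lbd$ one has $\phi(g)\cap\phi(g')\neq\emptyset$ exactly when $\phi_\lbd(g)\cap\phi_\lbd(g')\neq\emptyset$. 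For (i) I would then look at $\stker\phi\cap G_\lbd$: it is a subhypergroup of $G_\lbd$ containing $e$, and the previous remark makes it $\phi_\lbd$-stable, so minimality of $\stker\phi_\lbd$ forces $\stker\phi_\lbd\subseteq\stker\phi\cap G_\lbd\subseteq\stker\phi$.

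Granting (ii), part (iii) is immediate, since $\stker\phi$ is closed under $\star$ and hence contains any finite product of its subsets. So the real content is (ii), which I would prove by induction on $n$. First I would rewrite the hypothesis: by the inversion axiom of a hypergroup, $x_k\star\stker\phi_{\lbd_k}=\bar{y_k}\star\stker\phi_{\lbd_k}$ is equivalent to $y_k\star x_k\cap\stker\phi_{\lbd_k}\neq\emptyset$, so I may fix $s_k\in y_k\star x_k\cap\stker\phi_{\lbd_k}$. The case $n=0$ is exactly (i). For the inductive step I would write the product as $x_1\star M\star y_1$ with $M:=x_2\star\cdots\star x_n\star z\star y_n\star\cdots\star y_2$, which lies in $\stker\phi$ by the inductive hypothesis, and put $\mu:=\lbd_1$, $s:=s_1\in y_1\star x_1\cap\stker\phi_\mu\subseteq\stker\phi$.

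Here I would use the standing hypothesis that $\stker\phi_\mu$ is strongly normal in $G_\mu$: then $G_\mu/\stker\phi_\mu$ is a group (\cite[Lemma 1.5.2]{MR4696701}) and the matching condition says precisely that the classes of $x_1$ and $y_1$ are mutually inverse there; normality also yields $x_1\star y_1\cap\stker\phi_\mu\neq\emptyset$. In the purely factor-wise situation this already gives $x_1\star t\star y_1\subseteq\stker\phi_\mu$ for every $t\in\stker\phi_\mu$, since the coset map sends this product to the identity. To reach the general case I would fix $q\in x_1\star m\star y_1$ for some $m\in M$, expand $\phi(q)\subseteq\phi_\mu(x_1)\star\phi(m)\star\phi_\mu(y_1)$ in the free product $H=\bigast_\lbd H_\lbd$, and use the first observation: strong normality turns the matching of $x_1$ with $y_1$ into an actual cancellation of the two outer $H_\mu$-contributions of a reduced word of $\phi(q)$, bringing that word into $\phi(s\star M)\subseteq\phi(\stker\phi)$ and so placing $q$ in $\stker\phi$.

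The step I expect to be the real obstacle is exactly this last one: controlling reduced words in the free product $H$. A priori the matching condition is only a coset — in effect a cyclic — relation between $x_1$ and $y_1$, and it is strong normality of $\stker\phi_\mu$ that upgrades it to a genuine left/right cancellation surviving multiplication by the middle factor $M$. Making this precise requires a case analysis according to whether a reduced word of $\phi(m)$ begins or ends with a letter from $H_\mu$, so that it interacts with $\phi_\mu(x_1)$ or $\phi_\mu(y_1)$, and it is here that one genuinely needs the whole subhypergroup $\stker\phi_\mu$, rather than a single matching element, to sit inside $\stker\phi$.
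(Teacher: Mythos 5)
Your handling of (i) and (iii) coincides with the paper's and is fine. The gap is in the inductive step of (ii), and it is not the technical nuisance you describe but a genuine failure of the strategy: the claim that some reduced word of $\phi(q)$ is ``brought into $\phi(s\star M)$'' by cancellation of the two outer $H_\mu$-letters is false in general. The cancellation picture is imported from the group case, where $m\in\stker\phi$ forces $\phi(m)=\{e\}$, so the images of $x_1$ and $y_1$ become adjacent and annihilate. For hypergroups, $\stker\phi$ is the \emph{smallest} $\phi$-stable subhypergroup, and closure under $\star$ typically forces it to contain elements $m$ with $e\notin\phi(m)$; for such $m$ nothing cancels. Concretely, let $G_1=G_2$ be the fusion hypergroup of $SU(2)\times SU(2)$ (pairs of spins), $H_1=H_2$ the fusion hypergroup of $SU(2)$, and let $\map{\phi_i}{G_i}{H_i}$ be the morphism induced by restriction along the diagonal, $\phi_i(j_1,j_2)=j_1\star j_2$. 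Then $\stker\phi_i=\{(j_1,j_2)\mid j_1-j_2\in\Z\}$, which is strongly normal with quotient $\Z/2\Z$. Take the matched pair $x_1=y_1=(\tfrac{1}{2},0)\in G_1$ and the middle $z=(1,0)\in\stker\phi_2$, so $n=1$, $M=\{z\}$, and note $\phi_2(z)=\{1\}\not\ni e$. Here $x_1\star z\star y_1$ is the single reduced word $q=x_1zy_1$, and $\phi(q)$ is the single reduced word in $H_1\ast H_2$ with letters $\tfrac{1}{2}\in H_1$, $1\in H_2$, $\tfrac{1}{2}\in H_1$. On the other hand, every element of $\phi(s\star M)$ --- in fact of $\phi(N)$, where $N$ is the entire subhypergroup of $G$ generated by $\stker\phi_1\cup\stker\phi_2$ --- is a word all of whose letters are \emph{integer} spins, because $\phi_i(\stker\phi_i)$ consists of integer spins. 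Hence $\phi(q)\cap\phi(s\star M)=\emptyset$, and more damningly $\phi(q)\cap\phi(N)=\emptyset$: no single application of $\sim_\phi$ starting from anything your induction has produced so far can reach $q$. Note also that this failure occurs in what your case analysis treats as the easy case, where the reduced word of $\phi(m)$ neither begins nor ends with an $H_\mu$-letter, so there is no interaction to exploit.

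What is missing is a mechanism for transporting $\sim_{\phi_\lbd}$-\emph{chains} inside a factor $G_\lbd$ to chains between the conjugated words, and this is exactly what the paper's proof supplies. In the example, $q$ does lie in $\stker\phi$, but only via two steps: for $p:=(\tfrac{1}{2},0)(\tfrac{1}{2},\tfrac{1}{2})(\tfrac{1}{2},0)$ one checks $e\in\phi(p)$ and $\phi(p)\cap\phi(q)\neq\emptyset$, so $p\in\stker\phi$ and then $q\in\stker\phi$; the point is that $p$ is obtained from $q$ by replacing the middle letter $(1,0)$ by the $\sim_{\phi_2}$-related element $(\tfrac{1}{2},\tfrac{1}{2})$, mirroring the chain $(1,0)\sim_{\phi_2}(\tfrac{1}{2},\tfrac{1}{2})\sim_{\phi_2}e$ inside $G_2$. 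The paper organizes this systematically: fixing $\lbd$ and the length of the outer data, it considers the set of all $z\in G_\lbd$ for which \emph{every} conjugate $x_1\cdots x_{n+1}zy_{n+1}\cdots y_1$ lies in $\stker\phi$, shows this set is closed under multiplication, inversion and one step of $\sim_{\phi_\lbd}$ (conjugates of $\sim_{\phi_\lbd}$-related middles by the same outer data have intersecting $\phi$-images, since in the nontrivial case both are single reduced words), hence is a $\phi_\lbd$-stable subhypergroup; it contains $e$ by the induction hypothesis combined with strong normality, and then \emph{minimality} of $\stker\phi_\lbd$ finishes the step. Your outer-pair induction with a one-step comparison against $\phi(\stker\phi)$ cannot be repaired without an ingredient of this kind: it is the minimality of the stable kernel, not word combinatorics in $H$, that carries the argument.
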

\begin{proof}
(i) This follows from that $G_{\lbd} \cap \stker \phi$ is a
$\phi_{\lbd}$-stable subhypergroup.

(ii) By induction on $n$. The case of $n = 0$ is (i).
Assume the statement holds for $n$.
Fix $\lbd \in \Lambda$ and consider the following property
on $z \in \stker\phi_{\lbd}$:
for any sequence $(\lambda_1,\lbd_2,\cdots,\lbd_{n + 1})$
and any sequences $(x_1,x_2,\cdots,x_n), (y_1,y_2,\cdots,y_n)$
with $x_k,y_k \in G_{\lbd_k}$ and $x_k\stker \phi_k = \bar{y_k}\stker \phi_k$, the product
$x_1x_2\cdots x_{n + 1}zy_{n + 1}\cdots y_2y_1$ is contained in $\stker\phi$.
We can see that the subset of elements satisfying
this property is closed under the multiplication and the inverse function.
To see that it is also closed under the relation $\sim_{\phi_{\lbd}}$ in Lemma \ref{lemm:equivalence relation}, note that only the case
$\lbd_1\neq\lbd_2\neq\cdots\neq \lbd_{n + 1} \neq \lbd$ is non-trivial
since $G_{\lbd}/\stker \phi_{\lbd}$ is a group for any $\lbd$.
In this case $x_1x_2\cdots x_{n + 1}zy_{n + 1}\cdots y_2y_1$
is an element for any $z$,
hence we can directly see that the set is closed under $\sim_{\phi_{\lbd}}$.

Therefore it forms a $\phi_{\lbd}$-stable
subhypergroup unless it is empty. But the induction hypothesis and
the strong normality of $\stker \phi_{\lbd_{n + 1}}$ implies
$e \in G_{\lbd}$ is contained in this set. Hence we obtain the statement.

(iii) This follows from that $\stker \phi$ is closed under
the multiplication.
\end{proof}

\begin{lemm} \label{lemm:characterization of stability}
Let $\map{\phi}{G}{H}$ be a morphism of hypergroup and $N$ be a
subhypergroup of $G$. Then $N$ is $\phi$-stable
if and only if $\phi$ induces a partition of $\phi(G)/\phi(N)$
labelled by $G/N$. If $N$ is strongly normal additionally,
$\phi(N)$ is also strongly normal in $\phi(G)$ and
$\phi$ induces an isomorphism $G/N\cong \phi(G)/\phi(N)$.
\end{lemm}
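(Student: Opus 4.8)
The plan is to translate the statement into the combinatorics of $\phi$ and the product $\star$, after recording two preliminary facts. First, since $\phi$ is a hypergroup morphism and $N$ is a subhypergroup, $\phi(N)$ is again a subhypergroup of $H$: it contains $e = \phi(e)$, it satisfies $\phi(N)\star\phi(N) = \phi(N\star N) = \phi(N)$, and it is closed under inversion because $\phi$ intertwines the (unique) inversion functions, i.e. $\phi(\bar x) = \overline{\phi(x)}$. Thus $\phi(G)/\phi(N)$ is a genuine coset space. Second, for any $x \in G$ one has $\phi(xN) = \phi(x\star N) = \phi(x)\star\phi(N)$, which is saturated under right $\phi(N)$-multiplication and hence a union of $\phi(N)$-cosets; this yields a well-defined map $\bar\phi\colon G/N \to \cl{P}(\phi(G)/\phi(N))$ whose images cover $\phi(G)/\phi(N)$. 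The statement that $\phi$ \emph{induces a partition of $\phi(G)/\phi(N)$ labelled by $G/N$} then unpacks to saying that the sets $\bar\phi(xN)$ are pairwise disjoint, equivalently that $\phi(xN)\cap\phi(x'N)\neq\emptyset$ forces $xN = x'N$.

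For the equivalence I would show that this disjointness is exactly the stability condition $\phi^{-1}(\phi(N)) = N$. Taking $x' = e$ gives one implication immediately. For the converse, assume $N$ is $\phi$-stable and suppose $\phi(xN)\cap\phi(x'N)\neq\emptyset$; unwinding the unions produces $g\in xN$, $g'\in x'N$ and a common value $h\in\phi(g)\cap\phi(g')$. From $\bar h\in\overline{\phi(g')} = \phi(\bar{g'})$ and $e\in\bar h\star h$ one gets $e\in\phi(\bar{g'})\star\phi(g) = \phi(\bar{g'}\star g)$, so some $k\in\bar{g'}\star g$ satisfies $e\in\phi(k)$; then $\phi(k)$ meets $\phi(N)$, stability forces $k\in N$, and the inversion axiom turns $k\in\bar{g'}\star g$ into $g\in g'\star k\subseteq g'\star N$. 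Hence $gN = g'N$ and $xN = x'N$, as required. Equivalently, via Lemma \ref{lemm:equivalence relation} the same computation shows that every $\sim_{\phi}$-class lies in a single $N$-coset, using $\stker\phi\subseteq N$ for the $\phi$-stable subhypergroup $N$.

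For the strongly normal case I would first verify that $\phi(N)$ is strongly normal in $\phi(G)$: any $h\in\phi(G)$ lies in some $\phi(x)$, and then $h\star\phi(N)\star\bar h\subseteq\phi(x)\star\phi(N)\star\phi(\bar x) = \phi(x\star N\star\bar x)\subseteq\phi(N)$ using $xNx^{-1}\subseteq N$. By \cite[Lemma 1.5.2]{MR4696701} the quotient $\phi(G)/\phi(N)$ is then a group, as is $G/N$. The decisive point is that each image $\bar\phi(xN)$ is a single coset: for $h,h'\in\phi(x)$ one has $\bar h\star h'\subseteq\phi(\bar x\star x)\subseteq\phi(N)$, because in the group $G/N$ the coset $xN$ has inverse $\bar x N$, whence $(\bar x N)(xN) = N$ and so $\bar x\star x\subseteq N$. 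Therefore $h$ and $h'$ represent the same $\phi(N)$-coset. Combining this singleton property with the partition from the first part gives a bijection $\psi\colon G/N\to\phi(G)/\phi(N)$, $xN\mapsto\phi(x)\phi(N)$, and multiplicativity of $\phi$ makes $\psi$ a group isomorphism.

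I expect the main obstacle to be the bookkeeping forced by the double multivaluedness of $\phi$ and of $\star$: the arguments repeatedly convert a statement of the form ``$e$ lies in a product of image sets'' into concrete coset membership, using the inversion axiom together with the uniqueness of inverses, and the whole scheme rests on the compatibility $\phi(\bar x) = \overline{\phi(x)}$, which must be in place for $\phi(N)$ to be a subhypergroup at all. Once that compatibility and the translation of ``partition'' into the disjointness condition are pinned down, the remaining steps are routine manipulations with the hypergroup axioms.
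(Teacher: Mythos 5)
Your argument follows the paper's proof essentially step for step: you unpack ``partition'' as pairwise disjointness of the sets $\phi(xN)$, obtain stability from the partition by taking $x'=e$, and in the converse direction run the same collision argument as the paper (a point of $\phi(gN)\cap\phi(g'N)$ produces an element of $\phi(\bar{g'}\star g)$ meeting $\phi(N)$, so stability forces $\bar{g'}\star g$ to meet $N$ and the two cosets coincide); in the second half, your computation $h\star\phi(N)\star\bar h\subset\phi(x\star N\star\bar x)\subset\phi(N)$ and your single-coset argument $\bar h\star h'\subset\phi(\bar x\star x)\subset\phi(N)$ are exactly the paper's two steps (the paper writes the latter as $\phi(gN)\,\overline{\phi(gN)}\subset\phi(N)$), followed by the same bijectivity and multiplicativity check.

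One point deserves a warning, although it does not separate you from the paper. You assert that every morphism satisfies $\phi(\bar x)=\overline{\phi(x)}$ ``because $\phi$ intertwines the (unique) inversion functions''. This is not a consequence of the paper's definition of morphism, and uniqueness of the inversion does not yield it. For a counterexample, let $G=\{e,a\}$ with $a\star a=\{e,a\}$, and let $H=\{e,u,v,w\}$ with $\bar u=u$, $\bar v=w$, and products $u\star u=\{e,u\}$, $u\star v=v\star u=v\star v=\{v\}$, $u\star w=w\star u=w\star w=\{w\}$, $v\star w=w\star v=H$; one checks that $H$ is a hypergroup, and $\phi(e)=\{e\}$, $\phi(a)=\{u,v\}$ is a morphism because $\phi(a)\star\phi(a)=\{e,u\}\cup\{v\}=\phi(a\star a)$, yet $\overline{\phi(a)}=\{u,w\}\neq\phi(\bar a)$. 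The paper's own proof relies on the same compatibility silently (it writes $\phi(g'^{-1})$ and uses $\overline{\phi(gN)}=\phi(\overline{gN})$), so this is a gap shared with, not created relative to, the paper; it is also harmless in every application made of the lemma, since there the morphisms are induced by quantum group morphisms (tensor functors) and hence do preserve conjugation. But in your write-up this compatibility should be stated as an additional standing hypothesis, or verified for the morphisms at hand, rather than derived from uniqueness of inverses.
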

\begin{proof}
If $\phi$ induces the partition,
$g \in G\setminus N$ satisfies $\phi(g)\cap \phi(N) = \emptyset$.
Hence $N$ is $\phi$-stable.

Conversely assume that $N$ is $\phi$-stable. Take $g,g' \in G$
so that $\phi(gN) \cap \phi(g'N) \neq \emptyset$. Then
$\phi(g)\cap \phi(g')\phi(n)$ for some $n \in N$, and
$\phi(n)\cap \phi(g'^{-1})\phi(g)\neq \emptyset$. Then the
$\phi$-stability implies $g'^{-1}g \cap N \neq \emptyset$.
Hence $g \in g'N$, which means $gN = g'N$.

To show the latter statement, take $h \in \phi(G)$
and $g \in G$ such that $h \in \phi(g)$.
Then we have $h\phi(N)\bar{h} \subset \phi(g)\phi(N)\phi(\bar{g}) = \phi(gN\bar{g}) = \phi(N)$. Hence $\phi(N) \subset \phi(G)$ is strongly normal.
To prove $G/N\cong \phi(G)/\phi(N)$, take $g \in G$ and $h\phi(N), h'\phi(N) \in \phi(gN)$. Then $h\phi(N)\bar{h'\phi(N)} \subset \phi(gN)\bar{\phi(gN)} = \phi(N)$, which means $h\phi(N) = h'\phi(N)$ since $\phi(G)/\phi(N)$ is a group. Hence $\phi$ induces a single-valued surjective
map from $G/N$ to $\phi(G)/\phi(N)$, which is injective since
it gives the partition.
\end{proof}

\begin{proof}[Proof of Proposition \ref{prop:stable kernel of free product}]
 Let $N$ be the union of all subsets of the form (iii) in Lemma \ref{lemm:candidates}. This is a strongly normal hypersubgroup of $G$, containing
$\stker \phi_{\lbd}$ for all $\lbd \in \Lambda$.
Hence the universal property means that there is a canonical morphism
from $\bigast_{\lbd \in \Lambda} (G_{\lbd}/\stker \phi_{\lbd})$ to
$G/N$. On the other hand, we have the canonical mophism from
$G$ to $\bigast_{\lbd \in \Lambda} (G_{\lbd}/\stker \phi_{\lbd})$. Then the strong normality of each $\stker \phi_{\lbd}$ implies that
this factors through $G\longrightarrow G/N$.
Hence we have $\bigast_{\lbd} G_{\lbd}/\stker \phi_{\lbd} \cong G/N$.

The remaining part is
the $\phi$-stability of $N$, which concludes the statement since
$N$ is contained in $\stker \phi$ by constructon.
Lemma \ref{lemm:characterization of stability} implies we have
$G/N \cong \bigast_{\lbd \in \Lambda} G_{\lbd}/\stker\phi_{\lbd}\cong \bigast_{\lbd\in \Lambda} \phi(G_{\lbd})/\phi(\stker\phi_{\lbd})$.
Hence $\phi(G)\cong \bigast_{\lbd \in \Lambda} \phi_{\lbd}(G_{\lbd})\longrightarrow \bigast_{\lbd\in \Lambda} \phi(G_{\lbd})/\phi(\stker\phi_{\lbd})$ factors throught $\phi(G)\longrightarrow \phi(G)/\phi(N)$.
On the other hand we also have the inverse map by the universal property.
Hence we have $G/N\cong \phi(G)/\phi(N)$, which shows the
$\phi$-stability of $N$ by Lemma \ref{lemm:characterization of stability}.
\end{proof}

In general the stable kernel of the unimodularization of
a discrete quantum group is not strongly normal. For example
this is not the case for the dual of non-abelian
compact group, in which the stable kernel is trivial.

Before going to an application, we give an description
$\Gamma_{\hyp}/\stker q_{\hyp}$ in light of $\rep^{\fin}\hat{\Gamma}$.
See \cite[Subsection 1.1]{MR3782061} for the definition of the chain group
$\mathrm{Ch}[\cl{C}]$ of a rigid \cstar-tensor category $\cl{C}$.

\begin{lemm} \label{lemm:comparison with the chain group}
Let $\Gamma$ be a discrete quantum group such that
the stable kernel $\stker q_{\hyp}$
of the unimodularization is strongly normal.
Then $\Gamma_{\hyp}/\stker q_{\hyp}$ is isomorphic to
$\mathrm{Ch}[\rep^{\fin}\hat{\Gamma}]$.
Moreover there is a morphism $\Gamma \longrightarrow \Gamma_{\hyp}/\stker q_{\hyp}$ which induces the original morphism $\Gamma_{\hyp}\longrightarrow \Gamma_{\hyp}/\stker q_{\hyp}$.
\end{lemm}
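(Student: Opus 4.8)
The plan is to identify $\stker q_{\hyp}$ with the kernel of the canonical map to the chain group, from which both assertions fall out. Write $G := \mathrm{Ch}[\rep^{\fin}\hat{\Gamma}]$ (see \cite[Subsection 1.1]{MR3782061}) and let $\map{c}{\Gamma_{\hyp}}{G}$ be the map $\pi\mapsto [\pi]$ sending an irreducible to its chain-group class. By the defining relations $[\sigma] = [\pi][\rho]$ for $\sigma<\pi\tensor\rho$ this is a single-valued hypergroup morphism into the group $G$, it is surjective (every element of $G$ is a single class), and $N_c := c^{-1}(e) = \{\pi \mid [\pi] = e\}$ is a subhypergroup (closed under $\star$ since $[\sigma]=[\pi][\rho]$, and under conjugation since $[\bar{\pi}] = [\pi]^{-1}$). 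A short Frobenius-reciprocity argument identifies the fibres of $c$ with the cosets $\pi\star N_c$: if $[\pi]=[\pi']$ then every summand $\sigma<\bar{\pi'}\tensor\pi$ has $[\sigma]=e$, and $\pi<\pi'\tensor\sigma$ then exhibits $\pi\in\pi'\star N_c$. Thus $c$ induces a multiplicative bijection between $\Gamma_{\hyp}/N_c$ and $G$, and it suffices to prove $\stker q_{\hyp} = N_c$.

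For $\stker q_{\hyp}\subseteq N_c$ I would first realize the universal grading by a genuine quantum group morphism. The assignment sending $\pi$ to the Hilbert space $H_{\pi}$ placed in degree $[\pi]$ is a unitary tensor functor $\rep^{\fin}\hat{\Gamma}\to\rep^{\fin}\hat{G}$ commuting with the forgetful functors to $\hilb^{\fin}$; equivalently $u^{\pi}_{ij}\mapsto \delta_{ij}u_{[\pi]}$ defines a Hopf \astar-homomorphism $\cl{O}(\hat{\Gamma})\to\C[G] = \cl{O}(\hat{G})$, so by Tannaka--Krein duality (\cite{MR3204665}) it comes from a morphism $\map{c_{\Gamma}}{\Gamma}{G}$ of discrete quantum groups with $(c_{\Gamma})_{\hyp} = c$. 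Now $G$ is an ordinary group, hence unimodular, so the universal property of the unimodularization factors $c_{\Gamma}$ as $c_{\Gamma}=\bar{c}\circ q$ for a unique $\map{\bar{c}}{\Gamma_{\uni}}{G}$. Passing to hypergroups gives $c = \bar{c}_{\hyp}\circ q_{\hyp}$, so the singleton $\{[\pi]\}=c(\pi)$ equals $\bigcup_{\rho\in q_{\hyp}(\pi)}\bar{c}_{\hyp}(\rho)$, forcing $\bar{c}_{\hyp}(\rho)=\{[\pi]\}$ for every $\rho\in q_{\hyp}(\pi)$. Hence whenever $q_{\hyp}(\pi)\cap q_{\hyp}(\pi')\neq\emptyset$ we get $[\pi]=[\pi']$, so $c$ is constant on the classes of $\sim_{q_{\hyp}}$; by Lemma \ref{lemm:equivalence relation} this makes $N_c$ a $q_{\hyp}$-stable subhypergroup, and minimality of $\stker q_{\hyp}$ yields $\stker q_{\hyp}\subseteq N_c$.

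The reverse inclusion is where strong normality is used. Since $\stker q_{\hyp}$ is strongly normal, $Q := \Gamma_{\hyp}/\stker q_{\hyp}$ is a group (\cite[Lemma 1.5.2]{MR4696701}) and the quotient $\map{p}{\Gamma_{\hyp}}{Q}$ is a hypergroup morphism into a group, so $p(\mathbf{1})=e$ and $p(\sigma)=p(\pi)p(\rho)$ for $\sigma<\pi\tensor\rho$. The universal property of the chain group then produces a group homomorphism $\map{\Phi}{G}{Q}$ with $\Phi\circ c = p$; for $\sigma\in N_c$ this gives $p(\sigma)=\Phi(e)=e$, i.e. $\sigma\in\stker q_{\hyp}$, so $N_c\subseteq\stker q_{\hyp}$. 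Combining the two inclusions, $\stker q_{\hyp}=N_c$, whence the bijection of the first paragraph upgrades to a group isomorphism $\Gamma_{\hyp}/\stker q_{\hyp}\cong G$. Finally, reading the morphism $c_{\Gamma}$ through this isomorphism gives a morphism $\Gamma\to\Gamma_{\hyp}/\stker q_{\hyp}$ whose induced hypergroup morphism is $p$, settling the ``moreover''.

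The hard part is the construction of $c_{\Gamma}$: everything downstream hinges on the universal grading being realized by an \emph{honest} morphism of quantum groups into the group $G$, since only then can the universal property of the unimodularization be invoked to factor it through $q$. Checking that $u^{\pi}_{ij}\mapsto\delta_{ij}u_{[\pi]}$ is multiplicative (equivalently, that the grading functor is a well-defined unitary tensor functor) is routine from the chain-group relations, but it is the conceptual bridge between the representation-theoretic grading and the unimodularization, and it is the step where one must verify that no relations beyond those defining $G$ are imposed.
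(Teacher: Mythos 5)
Your proof is correct and follows essentially the same route as the paper's: both realize the relevant grading as a Hopf \astar-homomorphism $\cl{O}(\hat{\Gamma})\to\C[G]$, use that a group is unimodular so that the resulting quantum group morphism factors through the unimodularization, and invoke the universal property of the chain group. The only difference is organizational: the paper applies the factoring argument to an arbitrary $G$-grading and then cites \cite[Lemma 1.1]{MR3782061}, whereas you apply it to the chain grading itself and establish $\stker q_{\hyp} = N_c$ by a double inclusion, which simply unfolds that citation into explicit steps.
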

\begin{proof}
It is not difficult to see that $\stker q_{\hyp}$ gives
a natural grading on $\rep^{\fin}\hat{\Gamma}$.

Let $G$ be a group and consider a $G$-grading on $\rep^{\fin}\hat{\Gamma}$. We have a Hopf \astar-algebra homomorphism $\cl{O}(\hat{\Gamma}) \longrightarrow \C[G]$ defined as follows:
\begin{align*}
 (\xi^*\tensor 1)U_{\pi}(\eta\tensor 1) \longmapsto \ip{\xi,\eta}g,
\end{align*}
where $\xi,\eta \in H_{\pi}$ and $g$ is
the grading of $\pi \in \Gamma_{\hyp}$. Hence we have a morphism
$\Gamma\longrightarrow G$, which induces a morphism
$\map{\phi}{\Gamma_{\hyp}}{G}$ of hypergroup defined as
$\phi(\pi) = g$ when the grading of $\pi$ is $g$.
Since $G$ is unimodular, this morphism factors through
$\map{q_{\hyp}}{\Gamma_{\hyp}}{(\Gamma_{\uni})_{\hyp}}$.
In particular the stable kernel of $\phi$, which coincides
with $\{\pi \in \Gamma_{\hyp}\mid \phi(\pi) = \{e\}\}$,
contains $\stker q_{\hyp}$. Now \cite[Lemma 1.1]{MR3782061} implies
$\mathrm{Ch}{\rep^{\fin}\hat{\Gamma}}\cong \Gamma_{\hyp}/\stker q_{\hyp}$. The latter statement follows from this discussion.
\end{proof}

\begin{coro} \label{coro:findexsqg}
Let $(K_{\lbd})_{\lbd \in \Lambda}$ be a family
of connected simply connected compact Lie groups.
The associated weight lattice and root lattice is denoted by $P_{\lbd}$
and $Q_{\lbd}$ respectively for each $\lbd \in \Lambda$.
Let $\map{\phi}{\bigast_{\lbd \in \Lambda}{\hat{K_{\lbd}}}}{\bigast_{\lbd \in \Lambda}P_{\lbd}/Q_{\lbd}}$ be the morphism induced from
the $P_{\lbd}/Q_{\lbd}$-grading on $\rep^{\fin}K_{\lbd}$.
Then $H \longmapsto \phi^{-1}(H)$ gives a bijection between the set of finite index quantum subgroups of $\bigast_{\lbd \in \Lambda} \hat{K_{\lbd}}$ and the set of finite index subgroups
of $\bigast_{\lbd \in \Lambda} P_{\lbd}/Q_{\lbd}$. Moreover
it preserves the indices of quantum subgroups.
\end{coro}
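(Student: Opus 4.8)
The plan is to pass to the Drinfeld-Jimbo deformations, whose unimodularization is an honest torus and whose stable kernel is strongly normal, and then to read off the classification from Theorem~\ref{thrm:reduction to the unimodularization} and Proposition~\ref{prop:stable kernel of free product}. Write $G:=\bigast_{\lbd\in\Lambda}P_\lbd/Q_\lbd$, let $K_{\lbd,q}$ be the Drinfeld-Jimbo deformation of $K_\lbd$ at some $q\in(0,1)$, and set $\Gamma:=\bigast_{\lbd\in\Lambda}\hat{K_{\lbd,q}}$; the virtue of $\Gamma$ is that it shares its fusion hypergroup with $\bigast_\lbd\hat{K_\lbd}$ but has a far smaller unimodularization. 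By Example~\ref{exam:dual of cLg}, for each $\lbd$ one has $(\hat{K_{\lbd,q}})_{\hyp}\cong\hat{K_\lbd}_{\hyp}$, the unimodularization is $\hat{T_\lbd}$, and $\stker q_{\hyp,\lbd}=\hat{K_\lbd}_0$ with $\hat{K_\lbd}_{\hyp}/\stker q_{\hyp,\lbd}\cong P_\lbd/Q_\lbd$; moreover $\stker q_{\hyp,\lbd}$ is strongly normal, since in a group-graded hypergroup conjugation preserves the degree and hence the kernel of the grading. Using the canonical isomorphisms $(\bigast_\lbd\hat{K_{\lbd,q}})_{\hyp}\cong\bigast_\lbd\hat{K_\lbd}_{\hyp}$ and $(\bigast_\lbd\hat{K_{\lbd,q}})_{\uni}\cong\bigast_\lbd\hat{T_\lbd}$, the unimodularization morphism of $\Gamma$ is $q_{\hyp}=\bigast_\lbd q_{\hyp,\lbd}$, so Proposition~\ref{prop:stable kernel of free product} applies to $(q_{\hyp,\lbd})$ and yields that $\stker q_{\hyp}$ is strongly normal with $\Gamma_{\hyp}/\stker q_{\hyp}\cong G$. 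Strong normality collapses the double coset hypergroup, $\stker q_{\hyp}\backslash\Gamma_{\hyp}/\stker q_{\hyp}=\Gamma_{\hyp}/\stker q_{\hyp}\cong G$, and the quotient map $\Gamma_{\hyp}\to G$ is the morphism $\phi$ of the statement.

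Applying Theorem~\ref{thrm:reduction to the unimodularization} to $\Gamma$ then yields a bijection between finite index quantum subgroups of $\Gamma$ and finite index subhypergroups of $G$; since $G$ is a group these are exactly its finite index subgroups $H$, matched with the subhypergroups $\phi^{-1}(H)\subset\Gamma_{\hyp}$. Because finite index quantum subgroups depend only on the fusion hypergroup (Proposition~\ref{prop:hypergroup description of quantum subgroups} with Proposition~\ref{prop:subhypergroup characterization}) and $(\bigast_\lbd\hat{K_\lbd})_{\hyp}=\Gamma_{\hyp}$, the same assignment $H\mapsto\phi^{-1}(H)$ is a bijection from the finite index subgroups of $G$ onto the finite index quantum subgroups of $\bigast_\lbd\hat{K_\lbd}$, which is the asserted correspondence.

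For indices I would argue in two moves. In the deformed model imprimitivity gives $[\Gamma:\phi^{-1}(H)]=[\Gamma_{\uni}:(\phi^{-1}(H))_{\uni}]$ by Proposition~\ref{prop:imprimitivity}, with $\Gamma_{\uni}=\bigast_\lbd P_\lbd$ an ordinary discrete group. Writing $p\colon\bigast_\lbd P_\lbd\to G$ for the free product of the quotient maps $P_\lbd\to P_\lbd/Q_\lbd$, one has $\phi=p\circ q_{\hyp}$ (all weights of a representation of a fixed highest-weight class lie in one coset of the root lattice), and together with the surjectivity of $q_{\hyp}$ this identifies $(\phi^{-1}(H))_{\uni}$ with $p^{-1}(H)\le\bigast_\lbd P_\lbd$; hence $[\Gamma:\phi^{-1}(H)]=[\bigast_\lbd P_\lbd:p^{-1}(H)]=[G:H]$. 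To carry this value back to $\bigast_\lbd\hat{K_\lbd}$ I would use that the index equals $\dim_\C B^{\perp}$ (Proposition~\ref{prop:characterizations of finite index right coideals}); by Lemma~\ref{lemm:comparison of quantum functions and functions} the algebra $B^{\perp}$ splits over the coset space $\phi^{-1}(H)_{\hyp}\backslash\Gamma_{\hyp}$ into finite dimensional summands whose dimensions are controlled by fusion multiplicities alone, so $\dim_\C B^{\perp}$ is unchanged by the Drinfeld-Jimbo deformation. Thus $[\bigast_\lbd\hat{K_\lbd}:\phi^{-1}(H)]=[\Gamma:\phi^{-1}(H)]=[G:H]$.

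The main obstacle is exactly this transfer of the index. Unlike the bijection, the index of a quantum subgroup is not a priori an invariant of the fusion hypergroup---indeed $[\Gamma:\Gamma']$ can strictly exceed $\abs{\Gamma_{\hyp}/\Gamma'_{\hyp}}$---so the clean torus computation in the deformed model does not immediately descend to the genuine free product. The crux is therefore the dimension count: one must identify, coset by coset, the finite dimensional summands of $B^{\perp}$ produced by Lemma~\ref{lemm:comparison of quantum functions and functions} and verify that each depends only on the fusion rules of $\rep^{\fin}\hat{K_\lbd}$ and on the subhypergroup $\phi^{-1}(H)$. Once this invariance is in place, the remainder is routine bookkeeping with the free product constructions and the reduction theorems already established.
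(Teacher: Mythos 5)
Your proof of the bijection is correct and is essentially the paper's own argument: pass to the Drinfeld--Jimbo deformations (legitimate because quantum subgroups and the finite index condition only depend on the fusion hypergroup, by Propositions \ref{prop:hypergroup description of quantum subgroups} and \ref{prop:subhypergroup characterization}), compute the stable kernels factorwise as in Example \ref{exam:dual of cLg}, combine them by Proposition \ref{prop:stable kernel of free product} and Lemma \ref{lemm:comparison with the chain group}, and conclude with Theorem \ref{thrm:reduction to the unimodularization}. Your computation on the deformed side, $[\Gamma:\phi^{-1}(H)]=[\bigast_{\lbd}P_{\lbd}:p^{-1}(H)]=[G:H]$ via Proposition \ref{prop:imprimitivity}, is also correct.

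The genuine gap is the step you yourself flag: carrying the index back to $\bigast_{\lbd}\hat{K_{\lbd}}$. Your proposed justification --- that by Lemma \ref{lemm:comparison of quantum functions and functions} the coset summands of $B^{\perp}$ have dimensions ``controlled by fusion multiplicities alone'' --- is not available. Lemma \ref{lemm:comparison of quantum functions and functions} only yields finite dimensionality of the summands $\delta_{\pi\Gamma'_{\hyp}}B^{\perp}$; their size is cut out by the defining relations of $B^{\perp}$, which involve the actual isometries $V\in\onb(\pi,\rho\tensor\sigma)$, and $\rep^{\fin}K_{\lbd}$ and $\rep^{\fin}K_{\lbd,q}$ are \emph{not} monoidally equivalent, so there is no functor matching this intertwiner data. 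The paper's own remark after Proposition \ref{prop:subhypergroup characterization} ($\Gamma=\hat{S_3}$, where a single coset carries a $2$-dimensional summand determined by relations among intertwiners, and $[\Gamma:\Gamma']>\abs{\Gamma_{\hyp}/\Gamma'_{\hyp}}$) shows exactly that such summands are not governed by coset counting or multiplicities in any naive way. So what you defer as ``routine bookkeeping'' is in fact the entire content of the ``moreover'' clause.

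What closes the gap --- and what the paper's terse phrase ``comparison of the dual left coideals'' stands for --- is the grading itself. Suppose $\Gamma'_{\hyp}=\phi^{-1}(H)$ for a group grading $\map{\phi}{\Gamma_{\hyp}}{G}$ and let $X\in B^{\perp}$. For $\pi$ in any coset, every irreducible summand of $\bar{\pi}\tensor\pi$ has trivial degree, hence lies in $\phi^{-1}(e)\subset\Gamma'_{\hyp}$, so $\bar{\pi}\tensor\pi\in\rep^{\fin}\hat{\Gamma'}$; the coideal relation then gives $X_{\pi\tensor(\bar{\pi}\tensor\pi)}=X_{\pi}\tensor 1$, and naturality of $(X_W)_W$ against the morphism $\bar{R}^{*}\tensor 1_{H_{\pi}}\colon \pi\tensor(\bar{\pi}\tensor\pi)\to\pi$ (with $\bar{R}$ a solution of the conjugate equations) reads $\bar{R}^{*}(X_{\pi}\xi\tensor\bar{\zeta})\,\eta=\bar{R}^{*}(\xi\tensor\bar{\zeta})\,X_{\pi}\eta$, which forces $X_{\pi}\in\C 1_{H_{\pi}}$; connectedness of the coset under tensoring by $\rep^{\fin}\hat{\Gamma'}$ makes the scalar constant along each coset. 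Hence $B^{\perp}\cong\ell^{\infty}(\Gamma_{\hyp}/\Gamma'_{\hyp})\cong\ell^{\infty}(G/H)$ and, by Proposition \ref{prop:characterizations of finite index right coideals}, $[\Gamma:\phi^{-1}(H)]=[G:H]$. Note this computation is uniform in the category, so it applies verbatim to $\bigast_{\lbd}\hat{K_{\lbd}}$ and to $\bigast_{\lbd}\hat{K_{\lbd,q}}$ at once; in particular it also renders your detour through Proposition \ref{prop:imprimitivity} on the deformed side unnecessary.
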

\begin{proof}
By the same reason with Example \ref{exam:dual of cLg},
we can replace $K_{\lbd}$ by the Drinfeld-Jimbo deformation
$K_{\lbd,q}$ with $0 < q < 1$.
Let $\map{q_{\lbd}}{\hat{K_{\lbd,q}}}{\hat{K_{\lbd,q}}_{\uni}}$
be the unimodularization.
Then, as discussed in Example \ref{exam:dual of cLg},
$\stker q_{\lbd}\backslash \hat{K_{\lbd,q}}/\stker q_{\lbd}\cong P_{\lbd}/Q_{\lbd}$. This implies that $\stker q_{\lbd}$ is strongly normal.
Hence we can apply Proposition \ref{prop:stable kernel of free product} and Lemma \ref{lemm:comparison with the chain group}
to our case to obtain the conclusion, combining with Theorem \ref{thrm:reduction to the unimodularization}. The last statement
follows from the comparison of the dual left coideals.
\end{proof}

In the same way we also have:

\begin{coro}[{c.f. \cite[Main Theorem]{MR4824928}}]
Let $\{U_{Q_{\lbd}}^+\}_{\lbd \in \Lambda}$ be a family of
free unitary quantum group. Then there is a natural
bijection between the set
of finite index quantum subgroups of $\bigast_{\lbd \in \Lambda} \hat{U_{Q_{\lbd}}^+}$ and the set of finite index subgroups of $\Z^{\ast \Lambda}$, which preserves the indices of quantum subgroups.
\end{coro}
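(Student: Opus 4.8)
The plan is to run the argument of Corollary~\ref{coro:findexsqg} almost verbatim, replacing the input from Example~\ref{exam:dual of cLg} by the single-factor computation already carried out for $\hat{U_Q^+}$ at the beginning of this subsection. Write $\Gamma := \bigast_{\lbd \in \Lambda}\hat{U_{Q_\lbd}^+}$ and let $\map{q}{\Gamma}{\Gamma_{\uni}}$ be its unimodularization. First I would use Proposition~\ref{prop:subhypergroup characterization} together with the fact that the hypergroup $\hat{U_{Q_\lbd}^+}_{\hyp}$ does not depend on $N_\lbd \ge 2$ or on $Q_\lbd$ to reduce to $Q_\lbd = \mathrm{diag}\{q_\lbd,q_\lbd^{-1}\}$ with $q_\lbd \in (0,1)$ for every $\lbd$: both the set of finite index quantum subgroups and their indices are determined by the associated hypergroups, which are all isomorphic to the fixed hypergroup $S$.

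Next I would recall, for each $\lbd$, that the single-factor analysis identifies $\stker q_{\lbd,\hyp} = W_0$ (the words reducing to $0$), with the reduction map $r\colon [n_1]\cdots[n_l]\longmapsto n_1 + \cdots + n_l$ descending to an isomorphism $\hat{U_{Q_\lbd}^+}_{\hyp}/\stker q_{\lbd,\hyp}\cong \Z$. The key point enabling the free-product machinery is that each $\stker q_{\lbd,\hyp}$ is \emph{strongly normal}: since $r$ is a hypergroup morphism into the group $\Z$ with $W_0 = r^{-1}(0)$, for any $x$ with $r(x) = n$ and any $z \in W_0$ every element of $x\star z\star\bar{x}$ has $r$-value $n + 0 - n = 0$, so $x\star W_0\star\bar{x} \subset W_0$.

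With strong normality established I would invoke Proposition~\ref{prop:stable kernel of free product} for the family $(\map{q_{\lbd,\hyp}}{\hat{U_{Q_\lbd}^+}_{\hyp}}{(\hat{U_{Q_\lbd}^+}_{\uni})_{\hyp}})_{\lbd}$, using that the free product of the unimodularizations is the unimodularization of the free product, so that $q_{\hyp} = \bigast_{\lbd}q_{\lbd,\hyp}$. The proposition then gives that $\stker q_{\hyp}$ is strongly normal and
\begin{align*}
\Gamma_{\hyp}/\stker q_{\hyp} \cong \bigast_{\lbd \in \Lambda}\bigl(\hat{U_{Q_\lbd}^+}_{\hyp}/\stker q_{\lbd,\hyp}\bigr) \cong \bigast_{\lbd \in \Lambda}\Z = \Z^{\ast\Lambda}.
\end{align*}
Strong normality makes the double coset hypergroup coincide with this quotient, so $\stker q_{\hyp}\backslash \Gamma_{\hyp}/\stker q_{\hyp}\cong \Z^{\ast\Lambda}$ is an honest group, and its subhypergroups are exactly its subgroups. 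Theorem~\ref{thrm:reduction to the unimodularization} then yields the bijection between finite index quantum subgroups of $\Gamma$ and finite index subgroups of $\Z^{\ast\Lambda}$; Lemma~\ref{lemm:comparison with the chain group} realizes it concretely as $H \longmapsto \phi^{-1}(H)$ for the grading morphism $\map{\phi}{\Gamma_{\hyp}}{\Z^{\ast\Lambda}}$. Finally, preservation of indices follows by comparing the dual left coideals exactly as in Corollary~\ref{coro:findexsqg}, since $[\Gamma:\Gamma'] = \dim_{\C} C(\hat{\Gamma'})^{\perp}$ by Proposition~\ref{prop:characterizations of finite index right coideals} is preserved under the descent to the unimodularization by Proposition~\ref{prop:imprimitivity} and matches the group-theoretic index in $\Z^{\ast\Lambda}$.

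I expect the only genuinely substantive point to be the strong normality of each $\stker q_{\lbd,\hyp}$, which however is immediate once one reads $\stker q_{\lbd,\hyp}$ as the fiber over the neutral element of the group-valued grading $r$. Once this is in hand the statement is a formal consequence of Proposition~\ref{prop:stable kernel of free product} and Theorem~\ref{thrm:reduction to the unimodularization}, so the main (and mild) obstacle is verifying strong normality and checking that the classification of finite index quantum subgroups, together with their indices, is insensitive to the reduction to the standard $Q_\lbd$.
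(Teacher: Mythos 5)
Your proposal is correct and takes essentially the same route as the paper, whose proof of this corollary is exactly "the same way" as Corollary \ref{coro:findexsqg}: reduce to $Q_\lbd = \mathrm{diag}\{q_\lbd,q_\lbd^{-1}\}$ via hypergroup-invariance, use the single-factor computation $\stker q_{\lbd,\hyp} = W_0$ with $\stker q_{\lbd,\hyp}\backslash \hat{U_{Q_\lbd}^+}_{\hyp}/\stker q_{\lbd,\hyp} \cong \Z$ to get strong normality, then apply Proposition \ref{prop:stable kernel of free product}, Lemma \ref{lemm:comparison with the chain group} and Theorem \ref{thrm:reduction to the unimodularization}, with index preservation obtained by comparing the dual left coideals through Proposition \ref{prop:imprimitivity}. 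Your verification of strong normality directly from the reduction morphism $r$ into the group $\Z$ is only a cosmetic variant of the paper's observation that the double coset space being a group forces strong normality.
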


At the end of this paper, we give a concrete description of
subhypergroups corresponding to finite index quantum subgroups of
$\hat{SU(2)}\ast\hat{SU(2)}$.

\begin{exam}
 Consider $\Gamma = \hat{SU(2)}\ast\hat{SU(2)}$.
In this case we can classify finite index quantum subgroups of $\Gamma$
by finite index subgroups of $G = \Z/2\Z\ast \Z/2\Z$.
Let $s,t$ be the canonical generators of the free product. Then
any element of $G$ can be uniquely presented in the either form of
$stst\cdots$ or $tsts\cdots$. Hence the following
is the complete list of finite index subgroups of $G$:
\begin{itemize}
 \item $\Z/2\Z\ast \Z/2\Z$.
 \item $H_k := \{(st)^{kn}\mid n \in \Z\}$ for a positive integer $k$.
 \item $H_k^s := H_k\cup tH_k$ for an integer $k \ge 2$.
 \item $H_k^t := H_k\cup sH_k$ for an integer $k \ge 2$.
 \item $H_{k}^{s,\mathrm{odd}} := H_{2k}\cup t(st)^kH_{2k}$ for a positive integer $k$.
 \item $H_{k}^{t,\mathrm{odd}} := H_{2k}\cup s(ts)^kH_{2k}$ for a positive integer $k$.
\end{itemize}
By Corollary \ref{coro:findexsqg}, this list also classifies
finite index quantum subgroups of $\hat{SU(2)}\ast\hat{SU(2)}$.
\end{exam}
\vspace{10pt}
\noindent
{\bf Acknowlegements.}
The author appreciates to Yasuyuki Kawahigashi for helpful comments
and continuous support. He is grateful to Kan Kitamura for discussion
on finite dimensional right $\cl{O}(\G)$-module \cstar-algebras.
He is also grateful to Keisuke Hoshino for discussion on categories
of hypergroups.

\begin{bibdiv}
\begin{biblist}

\bib{MR1484551}{article}{
      author={Banica, Teodor},
       title={Le groupe quantique compact libre {${\rm U}(n)$}},
        date={1997},
        ISSN={0010-3616,1432-0916},
     journal={Comm. Math. Phys.},
      volume={190},
      number={1},
       pages={143\ndash 172},
         url={https://doi.org/10.1007/s002200050237},
      review={\MR{1484551}},
}

\bib{MR945550}{article}{
      author={Baillet, Michel},
      author={Denizeau, Yves},
      author={Havet, Jean-Fran\c~cois},
       title={Indice d'une esp\'erance conditionnelle},
        date={1988},
        ISSN={0010-437X,1570-5846},
     journal={Compositio Math.},
      volume={66},
      number={2},
       pages={199\ndash 236},
         url={http://www.numdam.org/item?id=CM_1988__66_2_199_0},
      review={\MR{945550}},
}

\bib{MR3308880}{book}{
      author={Bischoff, Marcel},
      author={Kawahigashi, Yasuyuki},
      author={Longo, Roberto},
      author={Rehren, Karl-Henning},
       title={Tensor categories and endomorphisms of von {N}eumann
  algebras---with applications to quantum field theory},
      series={SpringerBriefs in Mathematical Physics},
   publisher={Springer, Cham},
        date={2015},
      volume={3},
        ISBN={978-3-319-14300-2; 978-3-319-14301-9},
         url={https://doi.org/10.1007/978-3-319-14301-9},
      review={\MR{3308880}},
}

\bib{MR3782061}{article}{
      author={Bichon, Julien},
      author={Neshveyev, Sergey},
      author={Yamashita, Makoto},
       title={Graded twisting of comodule algebras and module categories},
        date={2018},
        ISSN={1661-6952,1661-6960},
     journal={J. Noncommut. Geom.},
      volume={12},
      number={1},
       pages={331\ndash 368},
         url={https://doi.org/10.4171/JNCG/278},
      review={\MR{3782061}},
}

\bib{MR3863479}{article}{
      author={Chirvasitu, Alexandru},
       title={Relative {F}ourier transforms and expectations on coideal
  subalgebras},
        date={2018},
        ISSN={0021-8693,1090-266X},
     journal={J. Algebra},
      volume={516},
       pages={271\ndash 297},
         url={https://doi.org/10.1016/j.jalgebra.2018.08.033},
      review={\MR{3863479}},
}

\bib{MR4627100}{article}{
      author={Chirvasitu, Alexandru},
      author={Krajczok, Jacek},
      author={So\l~tan, Piotr~M.},
       title={Compact quantum group structures on type-{I} {$\rm
  C^*$}-algebras},
        date={2023},
        ISSN={1661-6952,1661-6960},
     journal={J. Noncommut. Geom.},
      volume={17},
      number={3},
       pages={1129\ndash 1143},
         url={https://doi.org/10.4171/jncg/516},
      review={\MR{4627100}},
}

\bib{MR3285870}{article}{
      author={Daws, Matthew},
       title={Remarks on the quantum {B}ohr compactification},
        date={2013},
        ISSN={0019-2082,1945-6581},
     journal={Illinois J. Math.},
      volume={57},
      number={4},
       pages={1131\ndash 1171},
         url={http://projecteuclid.org/euclid.ijm/1417442565},
      review={\MR{3285870}},
}

\bib{MR3675047}{incollection}{
      author={De~Commer, Kenny},
       title={Actions of compact quantum groups},
        date={2017},
   booktitle={Topological quantum groups},
      series={Banach Center Publ.},
      volume={111},
   publisher={Polish Acad. Sci. Inst. Math., Warsaw},
       pages={33\ndash 100},
      review={\MR{3675047}},
}

\bib{MR4776189}{article}{
      author={De~Commer, Kenny},
      author={Dzokou~Talla, Joel~Right},
       title={Invariant integrals on coideals and their {D}rinfeld doubles},
        date={2024},
        ISSN={1073-7928,1687-0247},
     journal={Int. Math. Res. Not. IMRN},
      number={14},
       pages={10650\ndash 10677},
         url={https://doi.org/10.1093/imrn/rnae094},
      review={\MR{4776189}},
}

\bib{MR3121622}{article}{
      author={De~Commer, Kenny},
      author={Yamashita, Makoto},
       title={Tannaka-{K}re\u{\i}n duality for compact quantum homogeneous
  spaces. {I}. {G}eneral theory},
        date={2013},
        ISSN={1201-561X},
     journal={Theory Appl. Categ.},
      volume={28},
       pages={No. 31, 1099\ndash 1138},
      review={\MR{3121622}},
}

\bib{MR2980506}{article}{
      author={Daws, Matthew},
      author={Kasprzak, Pawe\l},
      author={Skalski, Adam},
      author={So\l~tan, Piotr~M.},
       title={Closed quantum subgroups of locally compact quantum groups},
        date={2012},
        ISSN={0001-8708,1090-2082},
     journal={Adv. Math.},
      volume={231},
      number={6},
       pages={3473\ndash 3501},
         url={https://doi.org/10.1016/j.aim.2012.09.002},
      review={\MR{2980506}},
}

\bib{MR2183279}{article}{
      author={Etingof, Pavel},
      author={Nikshych, Dmitri},
      author={Ostrik, Viktor},
       title={On fusion categories},
        date={2005},
        ISSN={0003-486X,1939-8980},
     journal={Ann. of Math. (2)},
      volume={162},
      number={2},
       pages={581\ndash 642},
         url={https://doi.org/10.4007/annals.2005.162.581},
      review={\MR{2183279}},
}

\bib{freslon2024discretequantumsubgroupsfree}{misc}{
      author={Freslon, Amaury},
      author={Weber, Moritz},
       title={Discrete quantum subgroups of free unitary quantum groups},
        date={2024},
         url={https://arxiv.org/abs/2403.01526},
}

\bib{MR4824928}{article}{
      author={Hoshino, Mao},
      author={Kitamura, Kan},
       title={A note on quantum subgroups of free quantum groups},
        date={2024},
        ISSN={1631-073X,1778-3569},
     journal={C. R. Math. Acad. Sci. Paris},
      volume={362},
       pages={1327\ndash 1330},
      review={\MR{4824928}},
}

\bib{MR4728596}{article}{
      author={Hoshino, Mao},
       title={The automatic imprimitivity for {$G_q$}},
        date={2024},
        ISSN={0022-1236,1096-0783},
     journal={J. Funct. Anal.},
      volume={286},
      number={12},
       pages={Paper No. 110413, 20},
         url={https://doi.org/10.1016/j.jfa.2024.110413},
      review={\MR{4728596}},
}

\bib{MR4742819}{article}{
      author={Kitamura, Kan},
       title={Discrete quantum subgroups of complex semisimple quantum groups},
        date={2024},
        ISSN={1073-7928,1687-0247},
     journal={Int. Math. Res. Not. IMRN},
      number={9},
       pages={7234\ndash 7254},
         url={https://doi.org/10.1093/imrn/rnad117},
      review={\MR{4742819}},
}

\bib{MR3128415}{article}{
      author={Kasprzak, Pawe\l},
      author={So\l~tan, Piotr~M.},
       title={Embeddable quantum homogeneous spaces},
        date={2014},
        ISSN={0022-247X,1096-0813},
     journal={J. Math. Anal. Appl.},
      volume={411},
      number={2},
       pages={574\ndash 591},
         url={https://doi.org/10.1016/j.jmaa.2013.07.084},
      review={\MR{3128415}},
}

\bib{MR1832993}{article}{
      author={Kustermans, Johan},
      author={Vaes, Stefaan},
       title={Locally compact quantum groups},
        date={2000},
        ISSN={0012-9593},
     journal={Ann. Sci. \'Ecole Norm. Sup. (4)},
      volume={33},
      number={6},
       pages={837\ndash 934},
         url={https://doi.org/10.1016/S0012-9593(00)01055-7},
      review={\MR{1832993}},
}

\bib{nakamura2024categorieshypermagmashypergroupsrelated}{misc}{
      author={Nakamura, So},
      author={Reyes, Manuel~L.},
       title={Categories of hypermagmas, hypergroups, and related
  hyperstructures},
        date={2024},
         url={https://arxiv.org/abs/2304.09273},
}

\bib{MR3204665}{book}{
      author={Neshveyev, Sergey},
      author={Tuset, Lars},
       title={Compact quantum groups and their representation categories},
      series={Cours Sp\'ecialis\'es [Specialized Courses]},
   publisher={Soci\'et\'e{} Math\'ematique de France, Paris},
        date={2013},
      volume={20},
        ISBN={978-2-85629-777-3},
      review={\MR{3204665}},
}

\bib{MR3556413}{article}{
      author={Neshveyev, Sergey},
      author={Yamashita, Makoto},
       title={Classification of non-{K}ac compact quantum groups of {${\rm
  SU}(n)$} type},
        date={2016},
        ISSN={1073-7928,1687-0247},
     journal={Int. Math. Res. Not. IMRN},
      number={11},
       pages={3356\ndash 3391},
         url={https://doi.org/10.1093/imrn/rnv241},
      review={\MR{3556413}},
}

\bib{MR3933035}{article}{
      author={Neshveyev, Sergey},
      author={Yamashita, Makoto},
       title={Categorically {M}orita equivalent compact quantum groups},
        date={2018},
        ISSN={1431-0635,1431-0643},
     journal={Doc. Math.},
      volume={23},
       pages={2165\ndash 2216},
      review={\MR{3933035}},
}

\bib{MR1048073}{article}{
      author={Parshall, Brian},
      author={Wang, Jian~Pan},
       title={Quantum linear groups},
        date={1991},
        ISSN={0065-9266,1947-6221},
     journal={Mem. Amer. Math. Soc.},
      volume={89},
      number={439},
       pages={vi+157},
         url={https://doi.org/10.1090/memo/0439},
      review={\MR{1048073}},
}

\bib{MR2210362}{article}{
      author={So\l~tan, Piotr~M.},
       title={Quantum {B}ohr compactification},
        date={2005},
        ISSN={0019-2082,1945-6581},
     journal={Illinois J. Math.},
      volume={49},
      number={4},
       pages={1245\ndash 1270},
         url={http://projecteuclid.org/euclid.ijm/1258138137},
      review={\MR{2210362}},
}

\bib{MR549940}{article}{
      author={Takeuchi, Mitsuhiro},
       title={Relative {H}opf modules---equivalences and freeness criteria},
        date={1979},
        ISSN={0021-8693},
     journal={J. Algebra},
      volume={60},
      number={2},
       pages={452\ndash 471},
         url={https://doi.org/10.1016/0021-8693(79)90093-0},
      review={\MR{549940}},
}

\bib{MR2276175}{article}{
      author={Tomatsu, Reiji},
       title={Amenable discrete quantum groups},
        date={2006},
        ISSN={0025-5645,1881-1167},
     journal={J. Math. Soc. Japan},
      volume={58},
      number={4},
       pages={949\ndash 964},
         url={http://projecteuclid.org/euclid.jmsj/1179759531},
      review={\MR{2276175}},
}

\bib{MR2335776}{article}{
      author={Tomatsu, Reiji},
       title={A characterization of right coideals of quotient type and its
  application to classification of {P}oisson boundaries},
        date={2007},
        ISSN={0010-3616,1432-0916},
     journal={Comm. Math. Phys.},
      volume={275},
      number={1},
       pages={271\ndash 296},
         url={https://doi.org/10.1007/s00220-007-0267-6},
      review={\MR{2335776}},
}

\bib{MR1814995}{article}{
      author={Vaes, Stefaan},
       title={The unitary implementation of a locally compact quantum group
  action},
        date={2001},
        ISSN={0022-1236,1096-0783},
     journal={J. Funct. Anal.},
      volume={180},
      number={2},
       pages={426\ndash 480},
         url={https://doi.org/10.1006/jfan.2000.3704},
      review={\MR{1814995}},
}

\bib{MR2182592}{article}{
      author={Vaes, Stefaan},
       title={A new approach to induction and imprimitivity results},
        date={2005},
        ISSN={0022-1236,1096-0783},
     journal={J. Funct. Anal.},
      volume={229},
      number={2},
       pages={317\ndash 374},
         url={https://doi.org/10.1016/j.jfa.2004.11.016},
      review={\MR{2182592}},
}

\bib{MR1382726}{article}{
      author={Van~Daele, Alfons},
      author={Wang, Shuzhou},
       title={Universal quantum groups},
        date={1996},
        ISSN={0129-167X,1793-6519},
     journal={Internat. J. Math.},
      volume={7},
      number={2},
       pages={255\ndash 263},
         url={https://doi.org/10.1142/S0129167X96000153},
      review={\MR{1382726}},
}

\bib{MR3119236}{article}{
      author={Wang, Shuzhou},
       title={Equivalent notions of normal quantum subgroups, compact quantum
  groups with properties {$F$} and {$FD$}, and other applications},
        date={2014},
        ISSN={0021-8693,1090-266X},
     journal={J. Algebra},
      volume={397},
       pages={515\ndash 534},
         url={https://doi.org/10.1016/j.jalgebra.2013.09.014},
      review={\MR{3119236}},
}

\bib{MR1316765}{article}{
      author={Wang, Shuzhou},
       title={Free products of compact quantum groups},
        date={1995},
        ISSN={0010-3616,1432-0916},
     journal={Comm. Math. Phys.},
      volume={167},
      number={3},
       pages={671\ndash 692},
         url={http://projecteuclid.org/euclid.cmp/1104272163},
      review={\MR{1316765}},
}

\bib{MR996807}{article}{
      author={Watatani, Yasuo},
       title={Index for {$C^*$}-subalgebras},
        date={1990},
        ISSN={0065-9266,1947-6221},
     journal={Mem. Amer. Math. Soc.},
      volume={83},
      number={424},
       pages={vi+117},
         url={https://doi.org/10.1090/memo/0424},
      review={\MR{996807}},
}

\bib{MR901157}{article}{
      author={Woronowicz, S.~L.},
       title={Compact matrix pseudogroups},
        date={1987},
        ISSN={0010-3616,1432-0916},
     journal={Comm. Math. Phys.},
      volume={111},
      number={4},
       pages={613\ndash 665},
         url={http://projecteuclid.org/euclid.cmp/1104159726},
      review={\MR{901157}},
}

\bib{MR1616348}{incollection}{
      author={Woronowicz, S.~L.},
       title={Compact quantum groups},
        date={1998},
   booktitle={Sym\'etries quantiques ({L}es {H}ouches, 1995)},
   publisher={North-Holland, Amsterdam},
       pages={845\ndash 884},
      review={\MR{1616348}},
}

\bib{MR4696701}{book}{
      author={Zieschang, Paul-Hermann},
       title={Hypergroups},
   publisher={Springer, Cham},
        date={2023},
        ISBN={978-3-031-39488-1; 978-3-031-39489-8},
         url={https://doi.org/10.1007/978-3-031-39489-8},
      review={\MR{4696701}},
}

\end{biblist}
\end{bibdiv}


\end{document}